\numberwithin{equation}{section}
\theoremstyle{plain}
\newtheorem{thm}{Theorem}[section]
\newtheorem{prop}[thm]{Proposition}
\newtheorem{lem}[thm]{Lemma}
\newtheorem*{referthmA}{Theorem A}
\newtheorem*{referthmB}{Theorem B}
\theoremstyle{definition}
\newtheorem{defn}[thm]{Definition}
\newtheorem{notation}[thm]{Notation}
\newcommand{\ichi}{\mathbf{1}}
\newcommand{\N}{\mathbb{N}}
\newcommand{\R}{\mathbb{R}}
\newcommand{\Z}{\mathbb{Z}}
\newcommand{\calM}{\mathcal{M}}
\newcommand{\calP}{\mathcal{P}}
\newcommand{\calS}{\mathcal{S}}
\newcommand{\dotS}{\Dot{S}}
\newcommand{\supp}{\mathrm{supp}\, }
\newcommand{\card}{\mathrm{card}\, }
\newcommand{\dist}{\, \mathrm{dist}\, }
\newcommand{\diam}{\, \mathrm{diam}\, }
\newcommand{\RomI}{\mathrm{I}}
\newcommand{\II}{\mathrm{I}\hspace{-0.5pt}\mathrm{I}}
\newcommand{\III}{\mathrm{I}\hspace{-0.5pt}\mathrm{I}\hspace{-0.5pt}\mathrm{I}}
\begin{document}

\title[Bilinear wave operators]
{Estimates for some bilinear wave operators} 

\author[T. Kato]{Tomoya Kato}
\author[A. Miyachi]{Akihiko Miyachi}
\author[N. Tomita]{Naohito Tomita}

\address[T. Kato]
{Division of Pure and Applied Science, 
Faculty of Science and Technology, Gunma University, 
Kiryu, Gunma 376-8515, Japan}

\address[A. Miyachi]
{Department of Mathematics, 
Tokyo Woman's Christian University, 
Zempukuji, Suginami-ku, Tokyo 167-8585, Japan}

\address[N. Tomita]
{Department of Mathematics, 
Graduate School of Science, Osaka University, 
Toyonaka, Osaka 560-0043, Japan}

\email[T. Kato]{t.katou@gunma-u.ac.jp}
\email[A. Miyachi]{miyachi@lab.twcu.ac.jp}
\email[N. Tomita]{tomita@math.sci.osaka-u.ac.jp}

\date{\today}

\keywords
{Flag paraproduct, 
bilinear Fourier multiplier, 
bilinear wave operator}

\thanks
{This work was supported by JSPS KAKENHI, 
Grant Numbers 
20K14339 (Kato), 
20H01815 (Miyachi), and 
20K03700 (Tomita).}

\subjclass[2020]
{42B15, 
42B20}

\begin{abstract}
We consider some bilinear Fourier multiplier operators
and give a bilinear version of
Seeger, Sogge, and Stein's result for Fourier integral operators.
Our results improve, for the case of Fourier multiplier operators,
Rodr\'iguez-L\'opez, Rule, and Staubach's result for
bilinear Fourier integral operators.
The sharpness of the results is also considered.
\end{abstract}

\maketitle

\section{Introduction}\label{intro}

The solution to the wave equation 
$\partial_t^2 u = \triangle u$ with the initial data 
$u(0, x)=f(x)$ and $u_t (0,x)=g(x)$ is given by 
\[
u(t,x) 
=
\frac{1}{(2\pi)^{n}}
\int_{\R^n}
e^{ix\cdot \xi}
\cos (t |\xi|)\, \widehat{f}(\xi)\, 
d\xi
+
\frac{1}{(2\pi)^{n}}
\int_{\R^n}
e^{ix\cdot \xi}\, 
\frac{\sin (t |\xi|)}{|\xi|}\, 
\widehat{g}(\xi)\, 
d\xi, 
\]
where $\widehat{f}$ denotes the Fourier transform of 
$f$ (for the definition of  Fourier transform, see 
Notation \ref{notation} below). 
Several basic properties of the mapping 
$(f,g) \mapsto u(t, \cdot)$ are derived from the estimate 
of the operator 
\begin{equation}\label{linear-wave}
Tf (x)= \frac{1}{(2\pi)^{n}}
\int_{\R^n}
e^{ix \cdot \xi}\, e^{i |\xi|} \, (1+|\xi|^2)^{m/2}
\, \widehat{f}(\xi)\, 
d\xi. 
\end{equation}
The purpose of this paper is to consider bilinear versions of this operator.

We begin with the definition of linear Fourier multiplier operators.

For $\theta \in L^{\infty} (\R^n)$, 
the operator $\theta (D)$ is defined by 
\[
\theta (D) f (x) 
=
\frac{1}{(2\pi)^{n}}
\int_{\R^n}
e^{i x \cdot \xi}\, 
\theta (\xi) 
\widehat{f}(\xi) 
\, d\xi, 
\quad 
x \in \R^n, 
\]
for $f$ in the Schwartz class $\calS (\R^n)$. 
If $X$ and $Y$ are function spaces on $\R^n$ 
equipped with quasi-norms or seminorms 
$\|\cdot \|_{X}$ and $\|\cdot \|_{Y}$, respectively, 
and if there exists a constant $A$ 
such that 
\[
\| 
\theta (D) f \|_{Y} 
\le A \|  f \|_{X} 
\quad \text{for all}\;\; 
f \in X \cap \calS, 
 \]
then we say that 
$\theta$ is a 
{\it Fourier multiplier}\/  
for $X\to Y$ 
and write $\theta \in \calM (X\to Y)$. 
(Sometimes we write 
$\theta (\xi) \in \calM (X\to Y)$ to mean 
$\theta (\cdot) \in \calM (X\to Y)$.)  
The minimum of $A$ that satisfies the above inequality 
is denoted by $\|\theta\|_{\calM (X\to Y)}$.

Throughout this paper, $H^p$, $0<p\le \infty$,  
denotes the Hardy space and $BMO$ denotes the 
space of bounded mean oscillation. 
We use the convention that $H^p = L^p$ if $1<p\le \infty$. 
For $H^p$ and $BMO$, see, {\it e.g.,}\/ \cite[Chapters III and IV]{S}.

We recall classical results about the operator 
\eqref{linear-wave} and its generalizations. 
We use the following notation. 

\begin{defn}\label{def-P} 
We write $\calP = \calP (\R^n)$ to denote 
the set of all functions on $\R^n$ that are 
real-valued, homogeneous of degree $1$, and 
$C^{\infty}$ away from the origin.
\end{defn}

The following theorem is due to Seeger, Sogge, and Stein \cite{SSS}. 

\begin{referthmA}[{Seeger--Sogge--Stein \cite{SSS}}] 
If $\phi \in \calP (\R^n)$, 
$1\le p\le \infty$, and $m=  -(n-1) |1/p-1/2|$, 
then 
\begin{equation*}
e^{i \phi (\xi) } \big( 1+  |\xi|^2\big)^{m/2} 
\in 
\begin{cases}
{\calM (H^p \to H^p)} & \text{when $1\le p<\infty$, } \\
{\calM (BMO \to BMO)} & \text{when $p=\infty$. }
\end{cases}
\end{equation*}
\end{referthmA}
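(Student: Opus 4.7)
My plan is to follow the strategy of Seeger, Sogge, and Stein. First, the $L^2$ case (\,$p=2$\,) is trivial since the symbol is bounded when $m=0$. The idea is to establish two endpoint results separately: $H^1\to L^1$ boundedness with $m=-(n-1)/2$, and the dual $L^\infty\to BMO$ boundedness with the same $m$. The remaining cases $1<p<\infty$ then follow from Stein's complex interpolation, applied to the analytic family of multipliers $e^{i\phi(\xi)}(1+|\xi|^2)^{z/2}$. The $L^\infty\to BMO$ statement is dual to $H^1\to L^1$ because the adjoint operator has symbol $e^{-i\phi(\xi)}(1+|\xi|^2)^{m/2}$, where $-\phi$ lies again in $\calP(\R^n)$; the improvement from $L^1$ to $H^1$ on the source side can be obtained afterward by showing that the image of an atom is an $H^1$-molecule.

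For $H^1\to L^1$, I would invoke the atomic decomposition and reduce matters to a uniform bound $\|Ta\|_{L^1}\le C$ for an arbitrary $H^1$-atom $a$ supported in a ball $B(x_0,r)$ with $\|a\|_\infty\le r^{-n}$ and $\int a=0$. A Littlewood--Paley decomposition gives $T=\sum_{j\ge 0} T_j$ with symbol essentially supported in $|\xi|\sim 2^j$, on which the amplitude has size $\sim 2^{-j(n-1)/2}$.

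The crux is the classical \emph{second dyadic decomposition} at each frequency scale. I would partition the annulus $|\xi|\sim 2^j$ into spherical caps of angular width $\sim 2^{-j/2}$, of which there are $\sim 2^{j(n-1)/2}$. Writing $T_j=\sum_\nu T_{j,\nu}$, the homogeneity of $\phi$ of degree $1$ allows me to replace $\phi(\xi)$ on each block by its linearization $\nabla\phi(\omega_\nu)\cdot\xi$ modulo an $O(1)$ error with controlled derivatives, because the block has radial size $2^j$ and angular extent $2^{-j/2}$. Integration by parts then yields a kernel $K_{j,\nu}(x,y)$ which, after subtracting the translation by $-\nabla\phi(\omega_\nu)$, decays rapidly off a tube $\Gamma_{j,\nu}$ of dimensions $2^{-j}$ in the direction $\omega_\nu$ and $2^{-j/2}$ in the orthogonal directions.

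The main obstacle will be combining these kernel estimates to sum in $j$ and $\nu$. I would split into two regimes: the low-frequency part $2^j r \le 1$, where one controls $T_j a$ by Cauchy--Schwarz, the moment condition of $a$, Plancherel, and the trivial bound on $m_j$; and the high-frequency part $2^j r>1$, where one shows that $T_{j,\nu}a$ is essentially concentrated in a translate of the tube $\Gamma_{j,\nu}$ attached to $B(x_0,r)$. A geometric covering argument then bounds the total measure of $\bigcup_\nu \Gamma_{j,\nu}$ by a constant independent of $j$, and the rapid off-tube decay together with the mean-zero condition gives summability in $j$. This is the delicate step where the exponent $-(n-1)/2$ is used in a critical way and where the proof is essentially sharp.
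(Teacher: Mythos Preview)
The paper does not actually give a proof of Theorem~A: it is stated as a known result, with the remark that the original article of Seeger--Sogge--Stein only contains the local version, and that the global formulation follows either by a slight modification of their argument or from the local-to-global results of Ruzhansky--Sugimoto. So there is no ``paper's own proof'' to compare against; the paper simply cites the literature.

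Your sketch is the standard Seeger--Sogge--Stein argument, and the key kernel estimates you describe (the second dyadic decomposition into $\sim 2^{j(n-1)/2}$ angular sectors of aperture $\sim 2^{-j/2}$, linearization of the phase $\phi(\xi)\approx \nabla\phi(\xi_j^\nu)\cdot\xi$, and the resulting tube-type kernel bounds) are exactly what the present paper records and uses in its Lemma~\ref{lem-220925} for other purposes. One small slip: when you write ``the improvement from $L^1$ to $H^1$ on the source side,'' you presumably mean the \emph{target} side, i.e.\ upgrading $H^1\to L^1$ to $H^1\to H^1$ via the molecular characterization; once that is done, duality gives $BMO\to BMO$ (not merely $L^\infty\to BMO$), which is what the statement requires at $p=\infty$.
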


In fact, this theorem is not given in \cite{SSS} 
in exactly the same form as above; 
the result given in \cite{SSS} is restricted to local estimate. 
However, Theorem A can be proved by a slight modification 
of the argument of \cite{SSS}. 
Or one can appeal to the general results given by 
Ruzhansky and Sugimoto \cite[Theorems 1.2 and 2.2]{RS}.

It is known that the number $-(n-1)|1/p-1/2|$ 
given in Theorem A is optimal. 
In fact, for the typical case $\phi (\xi)=|\xi|$, 
the following theorem holds.

\begin{referthmB}
If $1\le p\le \infty$ and if 
\begin{equation*}
e^{i |\xi| } \big( 1+  |\xi|^2\big)^{m/2} 
\in 
\begin{cases}
{\calM (H^p \to H^p)} & \text{when $1\le p<\infty$, } \\
{\calM (BMO \to BMO)} & \text{when $p=\infty$, }
\end{cases}
\end{equation*}
then $m\le -(n-1) |1/p-1/2|$. 
\end{referthmB}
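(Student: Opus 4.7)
The plan is to prove Theorem~B in two stages: first a duality reduction to $1 \le p \le 2$, then an explicit counterexample built from a frequency-localized bump whose image under $T$ spreads precisely as dictated by the wave propagation.

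For the duality reduction, the complex-conjugate multiplier $\overline{\theta(\xi)} = e^{-i|\xi|}(1+|\xi|^2)^{m/2}$ has the same multiplier norms as $\theta(\xi) = e^{i|\xi|}(1+|\xi|^2)^{m/2}$ (by $\xi \mapsto -\xi$). Combined with the standard duality $\calM(H^p \to H^p) = \calM(H^{p'} \to H^{p'})$ for $1 < p < \infty$ and $\calM(H^1 \to H^1) = \calM(BMO \to BMO)$, together with $|1/p - 1/2| = |1/p' - 1/2|$, this reduces Theorem~B to the range $1 \le p \le 2$.

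For the counterexample, fix $\phi \in C_c^\infty(\R^n)$ with $\phi(e_1) = 1$ and $\supp \phi \subset \{|\eta - e_1| \le 1/2\}$. For $R \gg 1$ set $\widehat{f_R}(\xi) = \phi(\xi/R)$. Since $\widehat{f_R}$ is supported in the annulus $\{R/2 \le |\xi| \le 3R/2\}$, a Littlewood--Paley argument gives $\|f_R\|_{H^p} \sim \|f_R\|_{L^p}$ (and similarly for $Tf_R$), while a direct scaling yields $\|f_R\|_{L^p} \sim R^{n(1-1/p)}$. For the image, the change of variable $\xi = R\eta$ together with $(1+|\xi|^2)^{m/2} \sim R^m$ on the support gives
\[
Tf_R(x) \approx \frac{R^{m+n}}{(2\pi)^n} \int_{\R^n} e^{iR\Phi(\eta; x)}\, \phi(\eta)\, d\eta,
\qquad \Phi(\eta; x) = x\cdot\eta + |\eta|.
\]
The phase $\Phi$ has a critical point $\eta = -x/|x|$ only when $|x| = 1$, and at $x = -e_1$ the critical set inside $\supp \phi$ is the radial ray $\{te_1 : 1/2 \le t \le 3/2\}$, transverse to which the $\eta$-Hessian of $\Phi$ is positive-definite with eigenvalues of size $\sim 1$ (the single null direction is radial in $\eta$, reflecting the homogeneity of $|\eta|$). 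Applying stationary phase in the $n-1$ transverse directions yields $|Tf_R(x)| \gtrsim R^{m+(n+1)/2}$ on the set
\[
E_R = \{x \in \R^n : \big||x|-1\big| \lesssim R^{-1},\ |x/|x| + e_1| \lesssim 1\},
\]
a cap of bounded solid angle near $-e_1$ on a spherical shell of radius $1$ and thickness $R^{-1}$, so $|E_R| \sim R^{-1}$. Therefore $\|Tf_R\|_{H^p} \gtrsim R^{m+(n+1)/2 - 1/p}$, and
\[
\frac{\|Tf_R\|_{H^p}}{\|f_R\|_{H^p}} \gtrsim R^{m+(n+1)/2 - 1/p - n(1-1/p)} = R^{m + (n-1)(1/p-1/2)}.
\]
Boundedness of $\theta$ forces this ratio to be uniformly bounded as $R \to \infty$, hence $m \le -(n-1)(1/p-1/2) = -(n-1)|1/p-1/2|$ for $1 \le p \le 2$.

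The main obstacle will be executing the degenerate stationary phase calculation carefully: the single null direction of the Hessian of $\Phi$ (the radial direction of $\eta$, arising from the homogeneity of $|\eta|$) produces only the $R^{-(n-1)/2}$ gain rather than the $R^{-n/2}$ gain typical of a fully non-degenerate phase, and this one-dimensional degeneracy is precisely what accounts for the sharp exponent $-(n-1)|1/p-1/2|$ rather than $-n|1/p-1/2|$ (or the merely trivial bound $m \le 0$) in the answer.
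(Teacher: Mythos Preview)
Your outline is correct and the exponent bookkeeping checks out. Note, however, that the paper does not actually prove Theorem~B: it is stated as a known reference result with a citation to \cite{M-wave} and \cite[Chapter~IX, 6.13]{S}, so there is no in-paper proof to compare against directly.

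That said, the paper does carry out the analogous sharpness calculation for the bilinear problem (Lemma~\ref{lem-220919} and the proof of Theorem~\ref{th-E}), and the technique there differs from yours. Rather than a bump localized near a single direction together with a degenerate stationary-phase argument, the paper takes a \emph{radial} test function $\widehat{f_j}(\xi)=\psi(2^{-j}|\xi|)$ and computes $(e^{\pm i|\xi|}\psi(2^{-j}|\xi|))^{\vee}$ explicitly via the Bessel formula for radial Fourier transforms. The asymptotic expansion of $J_{(n-2)/2}$ then reduces everything to a one-dimensional oscillatory integral with a simple non-degenerate critical point at $|x|=1$, yielding the lower bound $\gtrsim 2^{j(n+1)/2}$ on the shell $\big||x|-1\big|<\delta\,2^{-j}$ along with matching upper bounds. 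This sidesteps entirely the degenerate-Hessian issue you flag as the main obstacle: the null radial direction of the Hessian of $|\eta|$ is absorbed into the Bessel machinery, and what remains is a clean one-variable calculation. Your stationary-phase route is more flexible---it would adapt to non-radial phases $\phi\in\calP$---but for the specific phase $\phi(\xi)=|\xi|$ the radial approach is shorter and delivers more explicit constants without having to justify a lower bound from a critical-manifold expansion.
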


For a proof of this theorem, 
see \cite[Theorem 1]{M-wave} or \cite[Chapter IX, 6.13]{S}.

The purpose of the present 
paper is to consider bilinear versions of Theorems A and B.

We recall the definition of bilinear Fourier multiplier operators. 
For a bounded measurable 
function $\sigma = \sigma (\xi, \eta)$ on $\R^n\times \R^n$,
the bilinear operator
$T_{\sigma}$ is defined by
\[
T_{\sigma}(f, g)(x)
=\frac{1}{(2\pi)^{2n}}
\iint_{\R^n\times \R^n} 
e^{i x \cdot(\xi + \eta)}
\sigma (\xi, \eta) \,
\widehat{f}(\xi)\, 
\widehat{g}(\eta)
\, d\xi d\eta, 
\quad 
x \in \R^n, 
\]
for $f, g \in \calS(\R^n)$. 
If $X, Y$, and $Z$ are 
function spaces on $\R^n$ 
equipped with quasi-norms or seminorms 
$\|\cdot \|_{X}$, $\|\cdot \|_{Y}$, and $\|\cdot \|_{Z}$, 
respectively, and if there exists a constant $A$ such that 
\begin{equation*}
\|T_{\sigma}(f, g)\|_{Z}
\le A \|f\|_{X}\, \|g\|_{Y} 
\quad 
\text{for all}
\;\;
f \in X \cap \calS
\;\; 
\text{and all}\;\; 
g \in Y \cap \calS, 
\end{equation*}
then 
we say that 
$\sigma$ is a {\it bilinear Fourier multiplier}\/ for 
$X \times Y$ to $Z$ and 
write 
$\sigma \in \calM (X \times Y \to Z)$.  
(Sometimes we write 
$\theta (\xi, \eta) \in \calM (X\times Y \to Z)$ to mean 
$\theta (\cdot, \cdot) \in \calM (X\times Y \to Z)$.)  
The smallest constant $A$ that satisfies the above inequality 
is denoted by
$\|\sigma\|_{\calM (X\times Y \to Z)}$.

We shall consider the bilinear Fourier multiplier of the form 
\[
e^{i (\phi_1 (\xi)+ \phi_2 (\eta))} 
\sigma (\xi, \eta), 
\quad \phi_1, \phi_2 \in \calP (\R^n), 
\quad  \sigma \in S^{m}_{1,0} (\R^{2n}), 
\]
where the class $S^{m}_{1,0} (\R^{2n})$ is defined as 
follows. 
\begin{defn}\label{def-Sm10} 
For $m\in \R$, 
the class $S^{m}_{1,0} (\R^{2n})$ is defined to be the set of all 
$C^{\infty}$ functions $\sigma = \sigma (\xi, \eta)$ 
on $\R^{2n}$ 
that satisfy the estimate 
\begin{equation*}
|\partial^{\alpha}_{\xi} 
\partial^{\beta}_{\eta} \sigma (\xi, \eta)| 
\le 
C_{\alpha} \left( 1+ |\xi| + |\eta|\right)^{m-|\alpha|-|\beta|}
\end{equation*}
for all multi-indices $\alpha, \beta$. 
\end{defn}

In the theory of bilinear Fourier multipliers, 
a classical method is known that allows us 
to write multiplier $\sigma \in S^{m}_{1,0}(\R^{2n})$ 
as a sum of multipliers of the product form 
$\theta_{1} (\xi) \theta_{2} (\eta)$. 
Using this method, we can deduce 
the following theorem from Theorem A.

\begin{thm}\label{th-C2} 
Let  
$n\ge 2$,  
$1\le p, q\le \infty$, and $1/p+1/q=1/r$.  
Assume 
$\phi_1, \phi_2 \in \calP (\R^n)$ and 
$\sigma \in S^{m}_{1,0}(\R^{2n})$ with 
$m =     
-(n-1) \big( |{1}/{p}- {1}/{2}|+|{1}/{q}- {1}/{2}| \big) $.
Then 
$e^{i (\phi_1 (\xi)+ \phi_2 (\eta))} 
\sigma (\xi, \eta)
\in 
\calM (H^p \times H^q \to L^r)$, 
where $L^r$ should be replaced by $BMO$ when $r=\infty$. 
\end{thm}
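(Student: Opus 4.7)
The plan is to expand $\sigma$ into a convergent sum of tensor products $a(\xi) b(\eta)$ and then apply Theorem A to each one-variable factor. First I would take a smooth dyadic partition $\{\psi_j\}_{j\ge 0}$ on $\R^{2n}$ with $\psi_j$ supported where $|\xi|+|\eta|\sim 2^j$ (and $\psi_0$ on the unit ball), and set $\sigma_j=\psi_j\sigma$, so that $\sigma\in S^m_{1,0}$ forces $|\partial_\xi^\alpha\partial_\eta^\beta\sigma_j|\lesssim 2^{j(m-|\alpha|-|\beta|)}$. Rescaling $\sigma_j$ to a unit annulus and expanding in a Fourier series on a large cube yields the product decomposition
\[
\sigma_j(\xi,\eta)=\sum_{k_1,k_2\in\Z^n} c^{\, j}_{k_1,k_2}\, a^{\, j}_{k_1}(\xi)\, b^{\, j}_{k_2}(\eta),
\]
where $a^{\, j}_{k_1}(\xi)=\chi(\xi/2^j)\,e^{i\, 2^{-j}k_1\cdot\xi/R}$, analogously for $b^{\, j}_{k_2}$, and $|c^{\, j}_{k_1,k_2}|\le C_N\,2^{jm}(1+|k_1|+|k_2|)^{-N}$ for every $N$. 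Writing $F^{\, j}_{k_1}=e^{i\phi_1(D)}a^{\, j}_{k_1}(D)f$ and $G^{\, j}_{k_2}=e^{i\phi_2(D)}b^{\, j}_{k_2}(D)g$, the bilinear operator becomes $T_{e^{i(\phi_1+\phi_2)}\sigma}(f,g)=\sum_{j,k_1,k_2}c^{\, j}_{k_1,k_2}F^{\, j}_{k_1}G^{\, j}_{k_2}$.

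Set $m_1=-(n-1)|1/p-1/2|$ and $m_2=-(n-1)|1/q-1/2|$, so $m=m_1+m_2$. To estimate the linear pieces I would factor $a^{\, j}_{k_1}(\xi)=(1+|\xi|^2)^{m_1/2}\tilde a^{\, j}_{k_1}(\xi)$. Theorem A controls $e^{i\phi_1(D)}(1+|D|^2)^{m_1/2}$ on $H^p$ (on $BMO$ when $p=\infty$), and $\tilde a^{\, j}_{k_1}(D)$ has an $L^1$ convolution kernel of norm $\lesssim 2^{j|m_1|}$ uniformly in $k_1$ (the modulation is just a translation). Hence $\|F^{\, j}_{k_1}\|_{H^p}\lesssim 2^{j|m_1|}\|f\|_{H^p}$, and analogously for $G^{\, j}_{k_2}$.

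Because $m_1+m_2=-(|m_1|+|m_2|)$, a term-by-term estimate produces $\|c^{\, j}_{k_1,k_2}F^{\, j}_{k_1}G^{\, j}_{k_2}\|_{L^r}\lesssim (1+|k_1|+|k_2|)^{-N}\|f\|_{H^p}\|g\|_{H^q}$, which is uniform in $j$ but not summable. The needed gain comes from the frequency localization of $F^{\, j}_{k_1}, G^{\, j}_{k_2}$ at scale $2^j$: splitting $2^{jm}=2^{jm_1}\cdot 2^{jm_2}$ and applying Cauchy--Schwarz in $j$ then H\"older in $x$ gives
\[
\Big\|\sum_j c^{\, j}_{k_1,k_2}F^{\, j}_{k_1}G^{\, j}_{k_2}\Big\|_{L^r}\lesssim (1+|k_1|+|k_2|)^{-N}\Big\|\Big(\sum_j|2^{jm_1}F^{\, j}_{k_1}|^2\Big)^{\!1/2}\Big\|_{L^p}\Big\|\Big(\sum_j|2^{jm_2}G^{\, j}_{k_2}|^2\Big)^{\!1/2}\Big\|_{L^q}.
\]
Since $e^{i\phi_1(D)}$ commutes with every Fourier multiplier, $2^{jm_1}F^{\, j}_{k_1}$ is essentially the $j$-th Littlewood--Paley piece of $e^{i\phi_1(D)}(1+|D|^2)^{m_1/2}f$ translated by $2^{-j}k_1/R$; the Peetre maximal function converts the translation into a polynomial factor in $|k_1|$, and the Littlewood--Paley square function characterization together with Theorem A then bounds the $L^p$ norm by $\|f\|_{H^p}$. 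Summing in $(k_1,k_2)\in\Z^{2n}$ closes the estimate.

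The main obstacle is extending the above square-function bounds to the endpoints $p, q\in\{1,\infty\}$: there one must replace the $L^p$ Littlewood--Paley characterization and the Fefferman--Stein vector-valued maximal inequality with their $H^1$ and $BMO$ counterparts, and in the $p=q=\infty$ corner (so $r=\infty$ and target $BMO$) exploit the frequency localization of the output pieces $F^{\, j}_{k_1}G^{\, j}_{k_2}$ together with $H^1$--$BMO$ duality to convert the $L^\infty$ control of each factor into the required $BMO$ bound on $T(f,g)$.
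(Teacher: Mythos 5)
Your overall strategy (dyadic decomposition plus Fourier--series tensorization, then Theorem A applied to each one--variable factor) is the classical method the paper alludes to, and the paper's actual proof does use it --- but only after isolating the genuinely hard part into the flag--paraproduct Propositions \ref{prop-a0a1a2} and \ref{prop-a0a1}, whose proofs occupy all of Section \ref{G}. Your write-up skips exactly that part, and the step where it is skipped is false as stated. After cutting $\sigma$ on $\{|\xi|+|\eta|\sim 2^j\}$ and expanding, the factor $a^{\,j}_{k_1}(\xi)=\chi(\xi/2^j)e^{i2^{-j}k_1\cdot\xi/R}$ is supported in a \emph{ball} $\{|\xi|\lesssim 2^j\}$, not an annulus: on the support of $\psi_j$ one may have $|\xi|\ll 2^j$ and $|\eta|\sim 2^j$ or vice versa. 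Consequently $2^{jm_1}F^{\,j}_{k_1}$ is \emph{not} ``essentially the $j$-th Littlewood--Paley piece of $e^{i\phi_1(D)}(1+|D|^2)^{m_1/2}f$,'' and the square-function bound $\bigl\|(\sum_j|2^{jm_2}G^{\,j}_{k_2}|^2)^{1/2}\bigr\|_{L^q}\lesssim\|g\|_{H^q}$ fails: take $q=2$ (so $m_2=0$) and $g$ a single frequency-localized bump at scale $1$; then $|G^{\,j}_{k_2}|\approx|e^{i\phi_2(D)}g|$ for every $j$ and the $\ell^2_j$ sum diverges pointwise. To repair this one must first split into the regions $|\xi|\gg|\eta|$, $|\xi|\sim|\eta|$, $|\xi|\ll|\eta|$ and treat the two factors asymmetrically on the off-diagonal pieces --- square function on the high-frequency factor, a sup/maximal bound on the low-frequency one, plus the frequency localization of the \emph{output} to invoke the Littlewood--Paley characterization of the target space, and Schur's lemma to sum the geometric factors $2^{(j-k)m_2}$. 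That is precisely the content of Lemmas \ref{lem-paraproduct-A} and \ref{lem-paraproduct-B}.

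A second, independent gap is at the endpoints $p$ or $q\in\{1,\infty\}$. Your symmetric Cauchy--Schwarz/H\"older scheme requires, when $q=\infty$, an $L^\infty$ bound on a Littlewood--Paley-type square function of an $L^\infty$ (or, after Theorem A, a $BMO$) function, which is simply not available; saying one should ``replace the characterizations with their $H^1$ and $BMO$ counterparts'' does not produce such a bound. The paper instead uses $\|\psi_2(2^{-k}D)g\|_{L^\infty}\lesssim\|g\|_{BMO}$ on the low-frequency factor, the Fefferman--Stein decomposition of $BMO$ to reduce $BMO$ inputs to $L^\infty$ ones, atomic decompositions for $H^q$ with $q\le 1$, and $H^1$--$BMO$ duality combined with Lemma \ref{flag-Lem2.7} for the $BMO$ target. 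Finally, note that the paper's top-level decomposition is different from yours: it does not dyadically decompose the phase factor at all, but splits each variable into low/high frequency with $\varphi,\zeta$, writes each of the four resulting pieces as $(\text{flag symbol})\cdot e^{i\phi_1(\xi)}\zeta(\xi)|\xi|^{m_1}\cdot e^{i\phi_2(\eta)}\zeta(\eta)|\eta|^{m_2}$, and applies Theorem A globally to the one-variable factors; the dyadic/Fourier-series expansion you propose appears only inside the proofs of the flag propositions. Your outline would need essentially all of that machinery to close.
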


In fact, 
Rodr\'iguez-L\'opez--Rule--Staubach \cite{RRS} 
considered more general operators, bilinear 
Fourier integral operators, and proved a theorem that almost covers 
Theorem \ref{th-C2}.  
The statement of the theorem of \cite{RRS} is, however, 
restricted to local estimate. 
We shall give a full proof of Theorem \ref{th-C2}  
in a succeeding section, Section \ref{C}.

The main purpose of the present paper is to 
show that the number 
$m= -(n-1) \big( |{1}/{p}- {1}/{2}|+|{1}/{q}- {1}/{2}| \big) $ 
in Theorem \ref{th-C2} can be improved 
and show that the improved $m$ is 
optimal at least for certain $(p,q)$.

The following is the first main theorem of this paper.

\begin{thm}\label{th-E1D4} 
Let  
$n\ge 2$,  
$1\le p, q\le \infty$, and $1/p+1/q=1/r$.  
Assume 
$\phi_1, \phi_2 \in \calP (\R^n)$ and 
$\sigma \in S^{m}_{1,0}(\R^{2n})$ with 
$m= m_1 (p,q)$, where  
\begin{equation*}
m_1 (p,q)
=
\begin{cases}
{-(n-1) \big( 
|\frac{1}{p}- \frac{1}{2}|+|\frac{1}{q}- \frac{1}{2}| \big) }
& 
\text{if $1\le p,q \le 2$ or if $2\le p,q \le \infty$,} 
\\
{-\big( 
\frac{1}{p} - \frac{1}{2} \big) 
- (n-1) \big( \frac{1}{2} - \frac{1}{q} \big) }
& 
\text{if $1\le p\le 2\le q \le \infty$ and $\frac{1}{p}+\frac{1}{q} \le 1$,}
\\
{-(n-1)
\big( \frac{1}{p} - \frac{1}{2} \big) 
- \big( \frac{1}{2} - \frac{1}{q} \big) }
& 
\text{if $1\le p\le 2\le q \le \infty$ and $\frac{1}{p}+\frac{1}{q}\ge 1$,}
\\
{-(n-1)
\big( 
\frac{1}{2} - \frac{1}{p} \big) 
- \big( \frac{1}{q} - \frac{1}{2} \big) }
& 
\text{if $1\le q\le 2\le p \le \infty$ and $\frac{1}{p}+\frac{1}{q} \le 1$,}
\\
{-\big( \frac{1}{2} - \frac{1}{p} \big) 
- (n-1) \big( \frac{1}{q} - \frac{1}{2} \big) }
& 
\text{if $1\le q\le 2\le p \le \infty$ and $\frac{1}{p}+\frac{1}{q}\ge 1$.}
\end{cases}
\end{equation*}
Then 
$e^{i (\phi_1 (\xi)+\phi_2 (\eta))} \sigma (\xi, \eta)
\in 
\calM (H^p \times H^q \to L^r)$,   
where $L^r$ should be replaced by $BMO$ when $r=\infty$. 
\end{thm}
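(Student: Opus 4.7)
My plan is to refine the argument used for Theorem \ref{th-C2} (dyadic decomposition followed by tensor factorization of $\sigma$ and Hölder) by choosing the Hölder exponents more carefully in each frequency rectangle. First, I would write $\sigma=\sum_{j,k\geq 0}\sigma_{j,k}$ with $\sigma_{j,k}$ supported in $|\xi|\sim 2^j$, $|\eta|\sim 2^k$ and of size $\lesssim 2^{\max(j,k)\cdot m}$, and expand each $\sigma_{j,k}$ in a double Fourier series on the product annulus so that, up to rapidly-decaying coefficients, it is replaced by a tensor product $\theta_1^j(\xi)\theta_2^k(\eta)$. For such a tensor symbol the operator factors as $T(f,g)(x)=F^j(x)G^k(x)$ with $F^j=e^{i\phi_1(D)}\theta_1^j(D)f$ and $G^k=e^{i\phi_2(D)}\theta_2^k(D)g$, both frequency-localized; by Hölder, $\|FG\|_{L^r}\leq \|F^j\|_{L^a}\|G^k\|_{L^b}$ for any $1/a+1/b=1/r$.

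The proof of Theorem \ref{th-C2} takes $(a,b)=(p,q)$ and applies Theorem A to each factor, yielding the loss $(n-1)(|1/p-1/2|+|1/q-1/2|)$. The improvement in Theorem \ref{th-E1D4} comes from letting $(a,b)$ depend on the frequency rectangle $(j,k)$ (in particular whether $j\leq k$ or $j\geq k$) and on the sub-case of $m_1(p,q)$. The key linear bound I would use is: for $h$ frequency-localized at scale $2^\ell$,
\[
\|e^{i\phi(D)}h\|_{L^{q_2}}\lesssim 2^{\ell\,\Lambda(q_1,q_2)}\|h\|_{L^{q_1}}\qquad (1\leq q_1\leq q_2\leq\infty),
\]
with $\Lambda(q_1,q_2)=n(1/q_1-1/q_2)$ when $q_1\leq 2\leq q_2$ (the wave phase is absorbed by $L^2$-boundedness of $e^{i\phi(D)}$ and only the Bernstein loss remains), and with an extra Seeger--Sogge--Stein factor from Theorem A added when $2\notin[q_1,q_2]$. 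I would choose $(a,b)$ in each sub-case so that the dyadic exponents sum to exactly $m_1(p,q)$. The asymmetric sub-cases in the statement (in which either the $p$-factor or the $q$-factor carries the ``cheap'' loss) correspond to the regimes $j\leq k$ and $j\geq k$, and dualizing against $L^r$ vs.\ $L^{r'}$ (or $H^1$ vs.\ $BMO$) relates $1/p+1/q\leq 1$ to $1/p+1/q\geq 1$, since for $\phi\in\calP$ the adjoint symbol is again of the same type.

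The dyadic contributions are then assembled using the Littlewood--Paley square function characterization of $L^r$ for $1<r<\infty$ and atomic/molecular decompositions at the endpoints $r\in\{1,\infty\}$; the rapid decay of the Fourier coefficients handles the sum over the tensor index. The hardest step is the second: pinpointing the optimal $(a,b)$ in each of the five sub-cases and verifying that the dyadic sum converges with exactly the claimed exponent $m_1(p,q)$. The most delicate point is the endpoint $p=1$ or $q=\infty$, where Bernstein cannot be used to pass from $L^\infty$ (or $BMO$) to $L^b$ with $b<\infty$ on a frequency-localized piece; there one must use an atomic decomposition on the $f$-side and test against $BMO$ elements on the $g$-side, or pass to the dual. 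A secondary complication is the case $r<1$, where the $r$-subadditivity of the quasi-norm replaces the usual triangle inequality when summing the dyadic pieces.
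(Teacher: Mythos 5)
Your proposal and the paper diverge sharply in strategy, and the proposal has a genuine gap at the exact point where the improvement beyond Theorem \ref{th-C2} has to be earned. The paper reduces everything to the single endpoint estimate $e^{i(\phi_1(\xi)+\phi_2(\eta))}\sigma\in\calM(H^1\times L^\infty\to L^1)$ for $\sigma\in S^{-n/2}_{1,0}$ (Theorem \ref{th-D4}); it then obtains $\calM(L^\infty\times H^1\to L^1)$ by symmetry and deduces all five cases of $m_1(p,q)$ by complex interpolation against Theorem \ref{th-C2}. You instead propose a direct, case-by-case argument built on per-block H\"older with frequency-dependent exponents $(a,b)$ and a Bernstein/SSS loss on each factor. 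This does not reach the endpoint value $-n/2$. Concretely, at $(p,q)=(1,\infty)$ on a block $|\xi|\sim|\eta|\sim 2^j$, the $g$-factor is only controlled in $L^\infty$: a frequency-localized function bounded by $\|g\|_{L^\infty}$ cannot be pushed into any $L^b$ with $b<\infty$ without spatial localization, so H\"older forces $b=\infty$, hence $a=1$, and then the SSS loss on each of the two factors gives $2^{j(n-1)}$ per block, i.e.\ exactly $m=-(n-1)$, which is Theorem \ref{th-C2} and nothing more. Your bound $\Lambda(q_1,q_2)=n(1/q_1-1/q_2)$ for $q_1\le 2\le q_2$ is correct but unusable here, because Bernstein runs in the wrong direction for the $L^\infty$ factor.

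You do flag this difficulty, but the two workarounds you sketch do not close it. Passing to the adjoint does not yield a symbol of the same type: if $\sigma(\xi,\eta)=e^{i(\phi_1(\xi)+\phi_2(\eta))}\widetilde\sigma(\xi,\eta)$, the first adjoint has phase $\phi_1(-\xi-\eta)+\phi_2(\eta)$, which is not a sum of a function of $\xi$ and a function of $\eta$, so the hypotheses of the theorem are not reproduced under duality. And an atomic decomposition on the $f$-side \emph{alone} does not produce the improvement either. The crucial mechanism in the paper's proof of Theorem \ref{th-D4} is the interplay between the atom's radius $r$, the frequency scale $2^j$, and a splitting of $g$ into a piece localized on a fixed ball $B$ and a far-away piece: for the localized pieces one gets, per scale, the bound
\[
2^{jm}\min\bigl\{\,r^{-n/2},\ 2^{j(n/2+1)}r\,\bigr\},
\]
the first alternative coming from Cauchy--Schwarz and $L^2$-boundedness of $e^{i\phi(D)}$, the second from Cauchy--Schwarz plus the moment condition on the atom; the resulting series $\sum_j\min\{(2^jr)^{-n/2},\,2^jr\}$ converges uniformly in $r$ exactly when $m=-n/2$. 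This $\min$-trick, together with the kernel estimate $\|(e^{i\phi(\xi)}\zeta(\xi)\theta(2^{-j}\xi))^\vee\|_{L^1}\lesssim 2^{j(n-1)/2}$ used for the remaining pieces, is the heart of the argument and is absent from your proposal. Without it, the per-block H\"older framework cannot do better than $-(n-1)$ at $(1,\infty)$, and the five-case formula $m_1(p,q)$ is out of reach. I would advise replacing the direct case-by-case plan with the paper's interpolation reduction to a single endpoint, and then concentrating on the atomic argument with the radius-threshold splitting.
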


Compare the claims of Theorems \ref{th-C2} and \ref{th-E1D4}. 
They are the same 
in the regions $1\le p,q \le 2$ and $2\le p,q \le \infty$, 
but different outside of these regions. 
In the typical case $(p,q)=(1, \infty)$,  
Theorem \ref{th-C2} asserts that the multiplier 
$e^{i (\phi_1 (\xi)+\phi_2 (\eta))} \sigma (\xi, \eta)$ 
belongs to $\calM (H^1\times L^{\infty}\to L^1)$ 
if $\sigma \in S^{-(n-1)}_{1,0}(\R^{2n})$, 
whereas Theorem \ref{th-E1D4} asserts that the same 
holds 
if $\sigma \in S^{-\frac{n}{2}}_{1,0}(\R^{2n})$. 
The latter is stronger if $n\ge 3$. 
To be precise, observe that 
$m_1 (p,q) > 
-(n-1) \big( |{1}/{p}- {1}/{2}|+|{1}/{q}- {1}/{2}| \big) $ 
if $n\ge 3$ and 
$1\le p < 2 < q\le \infty$ 
or 
$1\le q < 2 < p\le \infty$. 
Thus Theorem \ref{th-E1D4} is an improvement of Theorem \ref{th-C2} 
for these $n, p, q$.

In order to show that 
the number $m_1 (p,q)$ is in fact optimal for some $(p,q)$, 
we consider the special case $\phi_1 (\xi)=\phi_2 (\xi)=|\xi|$.  
We write 
\begin{equation}\label{def-Xr}
X_r = 
\begin{cases}
{L^r} & \text{if $0<r<\infty$, } \\
{BMO} & \text{if $r=\infty$.}
\end{cases}
\end{equation}
For $p,q \in [1, \infty]$ given, set $1/r=1/p+1/q$ 
and we consider 
necessary condition on 
$m\in \R$ that allows the assertion  
\begin{equation}\label{eq-nec1}
e^{i (|\xi|+ |\eta|)} 
\sigma (\xi, \eta) 
\in \calM (H^p \times H^q \to X_r) 
\;\; 
\text{for all}
\;\; 
\sigma \in S^{m}_{1,0} (\R^{2n}). 
\end{equation}

The following is the second main theorem of this paper.

\begin{thm}\label{th-E1E3} 
Let $n \ge 2$. 

$(1)$ Let $1 \le p, q \le 2$ or $2\le p, q \le \infty$. 
Then 
$m \in \R$ satisfies \eqref{eq-nec1} 
only if 
$m \le - (n-1) \big( |1/p - 1/2| + |1/q - 1/2| \big)$. 

$(2)$ Let $1\le p \le 2\le q \le \infty$ 
or $1\le q \le 2\le p \le \infty$ 
and assume 
$1/p+1/q=1$. 
Then $m \in \R$ satisfies 
\eqref{eq-nec1} 
only if $m \le -n |1/p -1/2|$. 
\end{thm}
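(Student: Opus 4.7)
The plan is to prove Theorem \ref{th-E1E3} by constructing explicit counterexamples: given $m$ strictly greater than the stated threshold, I exhibit $\sigma \in S^m_{1,0}(\R^{2n})$ and test functions $f, g$ for which the ratio $\|T_\sigma(f,g)\|_{X_r}/(\|f\|_{H^p}\|g\|_{H^q})$ diverges along a sequence $N \to \infty$. In both parts the universal choice $\sigma(\xi, \eta) = (1+|\xi|^2+|\eta|^2)^{m/2}$ belongs to $S^m_{1,0}(\R^{2n})$, so the main work is in choosing the test functions.

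For Part (1) the critical threshold equals the sum of the two linear critical indices of Theorem B, which suggests a product-type construction. In the subregime $1 \le p, q \le 2$ I would take the sharp linear examples $\hat f(\xi) = \chi(\xi/N)$ and $\hat g(\eta) = \chi(\eta/N)$ for a smooth bump $\chi$ supported near $|\xi| = 1$. On the common frequency support $|\xi|, |\eta| \sim N$ the symbol satisfies $\sigma \approx N^m$, and the phase $e^{i(|\xi|+|\eta|)}$ factors completely, giving
\[
T_\sigma(f,g)(x) \approx N^m (e^{i|D|}f)(x)(e^{i|D|}g)(x).
\]
By classical stationary phase, each linear factor concentrates on the unit sphere $|x|=1$ with amplitude $\sim N^{(n+1)/2}$ and radial thickness $\sim 1/N$. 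Direct computation of the three norms then yields the ratio $\sim N^{m + (n-1)(|1/p-1/2|+|1/q-1/2|)}$, forcing the claim. The subregime $2 \le p, q \le \infty$ is handled by duality, or equivalently by using the phase-reversed examples $\hat f(\xi) = \chi(\xi/N) e^{-i|\xi|}$, $\hat g(\eta) = \chi(\eta/N) e^{-i|\eta|}$ that saturate the sharp $L^\infty \to BMO$ linear bound.

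For Part (2) the product construction of Part (1) yields only the trivial estimate $m \le 0$, because on the diagonal $1/p + 1/q = 1$ the two linear-index contributions cancel with opposite signs. To obtain the claimed sharper index $-n|1/p - 1/2|$ one must break the symmetry between $f$ and $g$. A natural candidate is the asymmetric pair $\hat f(\xi) = \chi(\xi/N)$ together with a Knapp-type wave packet $\hat g(\eta) = \chi((\eta - Ne_1)/\sqrt{N})$ focused near a single spherical direction; the bilinear operator then concentrates $T_\sigma(f,g)$ near the antipodal point $x = -e_1$ on a small patch of the unit sphere, and a Fresnel-type stationary-phase estimate in the tangential Knapp variables of $g$ should produce the required $N^{n|1/p-1/2|}$ factor.

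The principal obstacle is the Part (2) computation. The simple symmetric and asymmetric constructions I have tried recover only the weaker linear bound $m \le -(n-1)|1/p-1/2|$; the extra $|1/p - 1/2|$ loss needed for Part (2) must come from a specifically bilinear mechanism, reflecting the two-dimensional degeneracy of the stationary-phase set $\{(x,\xi,\eta) : x = -\xi/|\xi| = -\eta/|\eta|,\, |x| = 1\}$ parametrized by the two radial magnitudes of $\xi$ and $\eta$. The delicate part will be balancing the Knapp width of $g$ against the sphere curvature so as to exploit this degeneracy fully; a non-standard scaling of $g$, or a symbol $\sigma \in S^m_{1,0}$ tuned to the bilinear geometry in place of the universal one above, may be required.
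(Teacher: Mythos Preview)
Your approach to Part~(1) is essentially the paper's: both use frequency-localized bump functions $\widehat f = \chi(\cdot/N)$ (respectively their phase-reversed versions $\widehat f = e^{-i|\cdot|}\chi(\cdot/N)$) and the classical stationary-phase concentration of $e^{i|D|}f$ on the unit sphere. The paper packages this slightly differently, first reducing via the closed graph theorem to a uniform bound on the single-scale multipliers $2^{jm}\theta(2^{-j}\xi)\theta(2^{-j}\eta)$, but the test functions and arithmetic are the same.

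For Part~(2) you correctly identify a genuine gap: your Knapp construction does not reach $-n|1/p-1/2|$, and no single-wave-packet choice of $g$ will. The paper's mechanism is not stationary-phase degeneracy but \emph{randomization plus sphere--sphere intersection geometry}. With $f=f_j$ as in Part~(1), take
\[
g_{j,\alpha}(x)=\sum_{\ell\in\Z^n}\alpha_\ell\, f_j\bigl(x-\delta'2^{-j}\ell\bigr),
\]
so that $\|g_{j,\alpha}\|_{L^q}\lesssim 2^{j(n-n/q)}\|\alpha\|_{\ell^q}$. Taking the $\ell^q_\alpha$-dual of the assumed bound and then applying Khintchine's inequality to random signs $\varphi=\sum_\nu \epsilon_\nu \mathbf{1}_{Q_\nu}$ over the grid of cubes $Q_\nu$ of side $\sim 2^{-j}$ yields
\[
\sum_{\ell}\Bigl(\sum_\nu\Bigl|\int_{Q_\nu} S_jf_j(x)\,S_jf_j(x-\delta'2^{-j}\ell)\,dx\Bigr|^2\Bigr)^{q'/2}
\lesssim 2^{j(n-m)q'}.
\]
The lower bound comes from the set $\Sigma(v)=\{x:|x|=|x-v|=1\}$, which for $0<|v|<1$ is an $(n-2)$-sphere; its $2^{-j}$-neighborhood has volume $\sim 2^{-2j}$, hence meets $\sim 2^{j(n-2)}$ cubes $Q_\nu$, on each of which both factors $S_jf_j$ have amplitude $\sim 2^{j(n+1)/2}$. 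Counting $\sim 2^{jn}$ admissible $\ell$'s then forces $m\le n/2-n/p$. The extra $|1/p-1/2|$ you were looking for comes precisely from the codimension-$2$ (rather than codimension-$1$) nature of $\Sigma(v)$, and the Khintchine step is what allows one to sum the contributions of the many translates $\ell$ incoherently.
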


This theorem implies that the number 
$m_1 (p,q)$ of Theorem \ref{th-E1D4} 
is optimal 
for $p,q$ in the range given in (1) and (2) of Theorem \ref{th-E1E3}. 
The present authors do not know whether 
$m_1 (p,q)$ is optimal for other $p,q$.

The contents of the rest of the paper are 
as follows. 
In Section \ref{flag}, we collect 
some propositions concerning flag paraproduct, 
which we will use in the proof of Theorem \ref{th-C2}. 
In order not to interrupt the stream of argument, 
we shall postpone 
rather long proofs of those propositions 
to Section \ref{G}. 
In Sections \ref{C}, \ref{D}, and \ref{E}, 
we prove Theorems \ref{th-C2}, \ref{th-E1D4}, and 
\ref{th-E1E3}, respectively. 
The last section, Section \ref{G}, is devoted to the proofs of 
the propositions stated in Section \ref{flag}.

We end this section by introducing some notations 
used throughout this paper.

\begin{notation}\label{notation}
The Fourier transform and the inverse Fourier transform 
on $\R^d$ are defined by 
\begin{align*}
&
\widehat{f}(\xi)
=
\int_{\R^d}
e^{-i \xi \cdot x}
f(x)\, dx, 
\\
&
(g)^{\vee}(x)
=
\frac{1}{(2\pi)^d} 
\int_{\R^d}
e^{i \xi \cdot x}
g(\xi)\, d\xi. 
\end{align*}
Sometimes we use rude expressions 
$\big( f(x) \big)^{\wedge}$ 
or 
$\big( g(\xi) \big)^{\vee}$ 
to denote 
$\big( f(\cdot ) \big)^{\wedge}$ 
or  
$\big( g(\cdot ) \big)^{\vee}$, respectively.

We shall repeatedly use dyadic partition of unity, which is defined as follows.   
Take a function  
$\psi \in C_{0}^{\infty}(\R^n)$ such that 
$\supp \psi \subset \{2^{-1}\le |\xi| \le 2\}$ and 
$\sum_{j=-\infty}^{\infty} \psi (2^{-j} \xi)=1$ 
for
$\xi \neq 0$.
We define functions $\zeta$ and 
$\varphi$ by 
$\zeta (\xi)
=
\sum_{j=1}^{\infty} \psi (2^{-j}\xi)$ 
and 
$\varphi (\xi)= 1- \zeta (\xi)$. 
We have 
\begin{align*}
& 
\zeta (\xi)=0 \;\; \text{if}\;\; |\xi|\le 1,
\quad 
\zeta (\xi)=1 \;\; \text{if}\;\; |\xi|\ge 2,
\\
&
\varphi (\xi)=1 \;\; \text{if}\;\; |\xi|\le 1,
\quad 
\varphi (\xi)=0 \;\; \text{if}\;\; |\xi|\ge 2,
\\
&
\sum_{j=-\infty}^{k} \psi (2^{-j} \xi) = \varphi (2^{-k} \xi),  
\quad \xi \neq 0, \;\; k\in \Z. 
\end{align*}
Notice, however, that we will also use the letters $\psi$, $\zeta$, $\varphi$ 
in a meaning different from the above.

For a smooth function $\theta$ on $\R^d$ and for a nonnegative integer   
$N$, we write 
$\|\theta\|_{C^N}=
\max_{|\alpha|\le N} 
\sup_{\xi} \big| \partial_{\xi}^{\alpha} \theta (\xi) \big|$. 

The letter $n$ denotes the dimension of the Euclidean space 
that we consider.  
Unless further restrictions are explicitly made, 
$n$ is an arbitrary positive integer. 
\end{notation}


\section{Some results from bilinear flag paraproducts}\label{flag}

In this section, we give some results 
for the bilinear Fourier multipliers of the form 
\[
a_0 (\xi, \eta) a_1 (\xi) a_2 (\eta).
\]
This kind of multipliers with $a_0, a_1, a_2$ 
being the $0$-th order multipliers ({\it i.e.}\/ the ones 
that generalize the homogeneous functions of degree $0$) 
are considered by Muscalu \cite{Muscalu1, Muscalu2} 
and Muscalu--Schlag \cite[Chapter 8]{MuscaluSchlag}, 
where their mapping properties between $L^p$ spaces are given. 
In this section, we consider the case 
where $a_0, a_1, a_2$ are non-zero order multipliers 
and give estimates including $H^p$ and $BMO$. 
The results of this section will be used to prove 
Theorem \ref{th-C2}.

\begin{defn}\label{def-dotSm10} 
For $m\in \R$ and $d\in \N$, 
the class $\dot{S}^{m}_{1,0} (\R^d)$ is defined to be the set of all 
$C^{\infty}$ functions $\theta$ on $\R^d \setminus \{0\}$ 
such that 
\begin{equation*}
|\partial^{\alpha}_{\xi} \theta (\xi)| 
\le 
C_{\alpha} |\xi|^{m-|\alpha|}
\end{equation*}
for all multi-indices $\alpha$. 
\end{defn}

We first recall a classical result about the bilinear 
Fourier multipliers in the class $\dot{S}^{0}_{1,0}(\R^{2n})$. 
The following proposition was established by the works of 
Coifman--Meyer \cite{CM1, CM2, CM3}, 
Kenig--Stein \cite{KS}, 
Grafakos--Torres \cite{GT}, 
and 
Grafakos--Kalton \cite{GK1}.

\begin{prop}\label{prop-CM} 
If $\sigma \in \dot{S}^{0}_{1,0}(\R^{2n})$, 
then $\sigma \in \calM (H^p \times H^q \to L^r)$ 
for $0<p, q\le \infty$ and $1/p+1/q=1/r >0$,  
and also $\sigma \in \calM (L^{\infty} \times L^{\infty} \to BMO)$.  
\end{prop}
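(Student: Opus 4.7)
The plan is to decompose $\sigma$ via Littlewood--Paley in $(\xi,\eta)$ and further split by the relative size of $|\xi|$ and $|\eta|$, then treat each resulting paraproduct piece separately.

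First, using a bump $\psi$ supported in $\{|(\xi,\eta)|\sim 1\}$, write $\sigma=\sigma_0+\sum_{j\ge 1}\sigma_j$ with $\sigma_j(\xi,\eta)=\sigma(\xi,\eta)\psi(2^{-j}(\xi,\eta))$ supported in $\{|(\xi,\eta)|\sim 2^j\}$; the remaining smooth bump $\sigma_0$ near the origin is harmless. Further decompose each $\sigma_j$ via an angular partition of unity into three pieces corresponding to the cones $|\xi|\ll|\eta|$ (LH), $|\xi|\gg|\eta|$ (HL), and $|\xi|\sim|\eta|$ (HH). On each piece, the $\dot{S}^0_{1,0}$ estimates imply that the rescaled symbol $\sigma_j(2^j\cdot,2^j\cdot)$ is uniformly $C^N$, so a Fourier series expansion on a fixed cube yields a representation of the form
\[
\sigma_j^{\mathrm{LH}}(\xi,\eta)=\sum_{k\in\Z^{2n}}c_k^{\mathrm{LH}}(j)\,\theta_1^{(k)}(2^{-j}\xi)\,\theta_2^{(k)}(2^{-j}\eta),
\]
and similarly for HL and HH, with coefficients decaying faster than any polynomial in $|k|$ uniformly in $j$. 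Summing the rapidly decaying $k$-series, it suffices to bound each of the three model bilinear operators $T^{\mathrm{LH}}, T^{\mathrm{HL}}, T^{\mathrm{HH}}$ where, for instance,
\[
T^{\mathrm{LH}}(f,g)=\sum_j\bigl(\theta_1(2^{-j}D)f\bigr)\bigl(\theta_2(2^{-j}D)g\bigr),
\]
with $\theta_1$ a low-frequency bump and $\theta_2$ a Littlewood--Paley annulus cutoff.

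For $T^{\mathrm{LH}}$ (and symmetrically $T^{\mathrm{HL}}$), the low-frequency factor is pointwise dominated by the Hardy--Littlewood maximal function of $f$, while the annulus factor is a Littlewood--Paley piece of $g$. The standard paraproduct estimate---combining the square function characterization of $H^p$ with the Fefferman--Stein vector-valued maximal inequality---then gives $H^p\times H^q\to L^r$ boundedness in the full stated range. The high-high piece $T^{\mathrm{HH}}$, in which both factors are Littlewood--Paley pieces at the same scale, is a genuine bilinear Calder\'on--Zygmund operator whose kernel satisfies standard size and H\"older estimates off the diagonal $x=y=z$; its boundedness $H^p\times H^q\to L^r$ for $0<p,q\le\infty$ with $1/p+1/q=1/r>0$ is then provided by the bilinear Calder\'on--Zygmund theory of Grafakos--Torres and its Hardy-space extension by Grafakos--Kalton.

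The main obstacle is the endpoint $L^\infty\times L^\infty\to BMO$, which lies outside the reach of the square function method. The resolution is by duality: the two adjoint symbols $\sigma^{*1}(\xi,\eta)=\sigma(-\xi-\eta,\eta)$ and $\sigma^{*2}(\xi,\eta)=\sigma(\xi,-\xi-\eta)$ again lie in $\dot{S}^0_{1,0}(\R^{2n})$, so the $H^1\times L^\infty\to L^1$ bound established above for $T_{\sigma^{*1}}$ (and the symmetric one for $T_{\sigma^{*2}}$), combined with the Fefferman--Stein $H^1$--$BMO$ duality, yields the desired $L^\infty\times L^\infty\to BMO$ bound for $T_\sigma$.
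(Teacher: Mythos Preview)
The paper does not give its own proof of this proposition; it is stated as a known result and attributed to Coifman--Meyer, Kenig--Stein, Grafakos--Torres, and Grafakos--Kalton. Your outline is essentially the standard argument underlying those references: the paraproduct decomposition into LH/HL/HH pieces via dyadic and conical cutoffs, reduction by Fourier series to model tensor-product multipliers, the Littlewood--Paley/maximal-function treatment of the LH and HL paraproducts, the Grafakos--Torres/Grafakos--Kalton bilinear Calder\'on--Zygmund theory for the HH piece, and the duality step for $L^\infty\times L^\infty\to BMO$ (which works because the linear change of variables $(\xi,\eta)\mapsto(-\xi-\eta,\eta)$ is an isomorphism of $\R^{2n}$, so the transpose symbols remain in $\dot S^0_{1,0}$).

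One small imprecision worth flagging: in the LH/HL pieces you say the low-frequency factor is pointwise dominated by the Hardy--Littlewood maximal function and then invoke the Fefferman--Stein vector-valued maximal inequality. That inequality requires $p>1$, so as stated this does not cover $0<p\le 1$. The fix is routine: $|\theta_1(2^{-j}D)f|$ is dominated uniformly in $j$ by the smooth (radial or grand) maximal function $f^\ast$, and $\|f^\ast\|_{L^p}\approx\|f\|_{H^p}$ holds for all $0<p\le\infty$; then H\"older together with the square-function characterization of $H^q$ gives the bound. This is exactly the content of estimates such as Lemma~\ref{flag-Lem2.5} in the paper. With that adjustment your sketch is correct.
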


Proofs of the following two propositions will be given in Section \ref{G}.

\begin{prop}\label{prop-a0a1a2} 
Let $m_1, m_2 \le 0$, $m=m_1 + m_2$, 
$a_0 \in \dot{S}^{m}_{1,0}(\R^{2n})$, 
$a_1 \in \dot{S}^{-m_1}_{1,0}(\R^{n})$, 
$a_2 \in \dot{S}^{-m_2}_{1,0}(\R^{n})$, 
and let 
$
\sigma (\xi, \eta)
=a_0 (\xi, \eta) 
a_1 (\xi) a_2 (\eta)$.  
Then the following hold. 
\begin{itemize}
\item[$(1)$] $\sigma \in \calM (H^p \times H^q \to L^r)$ 
for $0<p, q < \infty$ and $1/p+1/q=1/r$. 
\item[$(2)$] If $m_2 <0$, then 
$\sigma \in \calM (H^p \times BMO \to L^p)$ 
for $0<p< \infty$. 
\item[$(3)$] If $m_1 <0$, then 
$\sigma \in \calM (BMO \times H^q \to L^q)$ 
for $0<q< \infty$. 
\item[$(4)$] If $m_1, m_2 <0$, then 
$\sigma \in \calM (BMO \times BMO \to BMO)$. 
\end{itemize}
\end{prop}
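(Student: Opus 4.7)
The plan is to reduce the proposition to Proposition \ref{prop-CM} by a dyadic decomposition that isolates the region $|\xi|\sim|\eta|$, where $a_0 a_1 a_2$ is already a Coifman--Meyer symbol, from the off-diagonal regions, where the flag structure demands an approximate tensor-product factorization of $a_0$.

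First I would fix a Littlewood--Paley partition of unity $\{\psi_j\}_{j\ge 0}$ on $\R^n$ (with $\psi_0$ the low-frequency cutoff and $\psi_j(\xi)=\psi(2^{-j}\xi)$ for $j\ge 1$) and write $\sigma=\sum_{j,k\ge 0}\sigma_{j,k}$ with
\[
\sigma_{j,k}(\xi,\eta)=a_0(\xi,\eta)\,[a_1\psi_j](\xi)\,[a_2\psi_k](\eta),
\]
then split the sum into the diagonal part $|j-k|\le N_0$ and the two off-diagonal parts $j>k+N_0$ and $k>j+N_0$, for a large constant $N_0$. On the diagonal, $|\xi|\sim|\eta|$ on the support, and the pointwise estimates on $a_0,a_1,a_2$ combined with the Leibniz rule give $\sum_{|j-k|\le N_0}\sigma_{j,k}\in\dot{S}^{0}_{1,0}(\R^{2n})$, so Proposition \ref{prop-CM} handles all four endpoint claims (1)--(4) for this piece at once.

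For the off-diagonal part $j>k+N_0$ (the symmetric one being handled identically), the support forces $|\xi|\sim 2^j\gg|\eta|\sim 2^k$. After rescaling by $(2^j,2^k)$, the symbol estimates on $a_0$ show that $a_0\cdot\psi_j\otimes\psi_k$ becomes smooth with uniformly bounded $C^N$ seminorms on a fixed compact set, so I would expand it as a double Fourier series in the rescaled variables to obtain
\[
a_0(\xi,\eta)\,\psi_j(\xi)\,\psi_k(\eta) = 2^{mj}\sum_{\nu=(\nu_1,\nu_2)\in\Z^{2n}} c_{j,k}^{\nu}\, e^{i 2^{-j}\nu_1\cdot\xi}\psi_j(\xi)\, e^{i 2^{-k}\nu_2\cdot\eta}\psi_k(\eta),
\]
with $\{c_{j,k}^{\nu}\}_{\nu}$ decaying faster than any polynomial in $\nu$, uniformly in $(j,k)$. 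Multiplying by $a_1\otimes a_2$ makes $\sigma_{j,k}$ a rapidly convergent sum of tensor-product multipliers, and therefore $T_{\sigma_{j,k}}(f,g)=\sum_\nu c_{j,k}^{\nu}(U_j^\nu f)(V_k^\nu g)$ with $U_j^\nu$ a linear multiplier of order $-m_1$ frequency-localized at scale $2^j$ and $V_k^\nu$ a linear multiplier of order $-m_2$ frequency-localized at scale $2^k$. I would apply H\"older pointwise, invoke the classical linear $H^p$ multiplier theorem to bound $\|U_j^\nu f\|_{L^p}\lesssim 2^{-m_1 j}(1+|\nu|)^N\|\Delta_j f\|_{H^p}$ (and similarly for $V_k^\nu$), and collapse the $k$-sum via a low-pass cutoff $\Psi_j(\eta)=\sum_{k<j-N_0}\psi_k(\eta)$ so that the total $\eta$-factor becomes a single multiplier of order $-m_2$ supported at scale $2^{j-N_0}$ with operator norm $O(2^{-m_2 j})$. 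The three exponents $2^{mj}\cdot 2^{-m_1 j}\cdot 2^{-m_2 j}=2^{0}$ balance exactly, and since the output for each $j$ has frequency support in $\{|\xi+\eta|\sim 2^j\}$, the outer sum in $j$ is controlled by the Littlewood--Paley square-function characterization of $L^r$ (or $H^r$, or $BMO$ when $r=\infty$).

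The chief technical obstacle is accommodating $BMO$ inputs in parts (2)--(4): when $m_2=0$, the collapsed $\eta$-multiplier is of order $0$ and cannot send $BMO$ into any usable space; the strict negativity $m_2<0$ (resp.\ $m_1<0$) produces a geometric factor $2^{m_2(j-k)}$ (resp.\ $2^{m_1(k-j)}$) that survives summation and allows a Mikhlin-type bound on a compactly supported multiplier to absorb the $BMO$ norm while respecting the frequency localization required for the square-function step. A secondary but nontrivial obstacle will be extracting $H^p$ norms for $p\le 1$ in part (1), which requires vector-valued Fefferman--Stein maximal inequalities to convert the $L^p$ bounds on Littlewood--Paley pieces into the Hardy-space norm on both the inputs and the outputs.
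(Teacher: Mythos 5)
Your overall strategy---a dyadic diagonal/off-diagonal splitting, Coifman--Meyer on the diagonal, Fourier-series tensorization of the rescaled blocks off the diagonal, and the geometric factor $2^{m_2(j-k)}$ doing the work for $BMO$ inputs---is essentially the paper's proof. There are, however, two genuine gaps. The most serious one is your assertion that Proposition \ref{prop-CM} ``handles all four endpoint claims (1)--(4) at once'' for the diagonal piece. It does not: Proposition \ref{prop-CM} admits $L^{\infty}$, not $BMO$, as an input endpoint, and a $\dot{S}^{0}_{1,0}$ bilinear multiplier does not map $H^p\times BMO\to L^p$ (or $BMO\times BMO\to BMO$) without a further argument. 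The paper upgrades $L^{\infty}$ to $BMO$ on the diagonal piece via the Fefferman--Stein decomposition $g=g_0+\sum_{\ell}R_{\ell}g_{\ell}$, absorbing each Riesz multiplier $-i|\eta|^{-1}\eta_{\ell}$ into $a_2$ (which remains in $\dot{S}^{-m_2}_{1,0}$). Some such device is needed, and without it parts (2)--(4) are unproved for the diagonal piece.

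Second, your decomposition runs over $j,k\ge 0$ with an inhomogeneous low-frequency block $\psi_0$, which does not match the homogeneous classes $\dot{S}^{\bullet}_{1,0}$: the flag structure persists down to frequency zero, and on the block $j=k=0$ the region $|\xi|\ll|\eta|\lesssim 1$ is included, where $a_0a_1a_2$ fails the $\dot{S}^{0}_{1,0}$ estimates (one only gets $|\partial_{\xi}^{\alpha}(a_0a_1a_2)|=O(|\xi|^{-|\alpha|})$, not $O((|\xi|+|\eta|)^{-|\alpha|})$), and the rescaled blocks do not have uniform $C^N$ bounds near the origin. The decomposition must be taken over all $j,k\in\Z$, as in the paper; with that fix your diagonal and off-diagonal claims become correct. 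A smaller caveat: for the off-diagonal sum with $p$ or $q\le 1$, or with $BMO$ in the first slot, ``H\"older pointwise plus linear multiplier theorems'' is not sufficient on its own---one needs the bilinear square-function estimates of Lemmas \ref{flag-Lem2.5} and \ref{flag-Lem2.7} (from \cite{MT-flag}), which you correctly gesture at through vector-valued Fefferman--Stein maximal inequalities but do not actually supply.
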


\begin{prop}\label{prop-a0a1} 
Let $m_1 \le 0$, 
$a_0 \in \dot{S}^{m_1}_{1,0}(\R^{2n})$, 
$a_1 \in \dot{S}^{-m_1}_{1,0}(\R^{n})$, 
and let 
$\tau (\xi, \eta)
=a_0 (\xi, \eta) 
a_1 (\xi)$.  
Then the following hold. 

$(1)$ $\tau \in \calM (H^p \times L^{\infty} \to L^p)$ 
for $0<p < \infty$. 

$(2)$  
If $m_1 <0$, then 
$\tau \in \calM (BMO \times L^{\infty} \to BMO)$. 
\end{prop}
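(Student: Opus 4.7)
The plan is a paraproduct decomposition of $\tau(\xi,\eta)=a_0(\xi,\eta)\,a_1(\xi)$ based on the relative sizes of $|\xi|$ and $|\eta|$. With $\psi$, $\varphi$ from Notation \ref{notation} and a fixed large constant $C$, I write $\tau=\tau^{\rm I}+\tau^{\rm II}$ where
\[
\tau^{\rm I}(\xi,\eta)=\sum_{k\in\Z} a_0(\xi,\eta)\,a_1(\xi)\,\psi(2^{-k}\xi)\,\varphi(2^{C-k}\eta)
\]
localises to the high-low region $|\xi|\sim 2^k$, $|\eta|\le 2^{k-C+1}$, and $\tau^{\rm II}=\tau-\tau^{\rm I}$ is the complementary low-high region, further split as $\tau^{\rm II}=\sum_j\tau^{\rm II}_j$ with $|\eta|\sim 2^j$, $|\xi|\lesssim 2^{j-C}$ on the support of $\tau^{\rm II}_j$. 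The hypotheses $a_0\in\dot{S}^{m_1}_{1,0}$ and $a_1\in\dot{S}^{-m_1}_{1,0}$ force each $\tau^{\rm I}_k$ to be an order-$0$ bump at scale $2^k$, and on $\tau^{\rm II}_j\cap\{|\xi|\sim 2^k\}$ give $|\tau^{\rm II}_j|\lesssim 2^{(j-k)m_1}$ with analogous scale-$2^{-j}$ (in $\eta$) and scale-$2^{-k}$ (in $\xi$) derivative bounds; the factor $2^{(j-k)m_1}$ decays in $j-k$ precisely when $m_1<0$.

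For the high-low piece, each $T_{\tau^{\rm I}_k}(f,g)=T_{\tau^{\rm I}_k}(\widetilde P_k f,\widetilde S_k g)$ is a classical paraproduct block (with $\widetilde P_k$ a widened Littlewood--Paley projection in $f$ and $\widetilde S_k$ a low-pass in $g$), whose kernel is $L^1$-normalised at scale $2^{-k}$ and whose output is spectrally localised at $|\xi+\eta|\sim 2^k$. Standard paraproduct arguments---Fefferman--Stein for the $L^p$ bound, and the Carleson-measure/tent-space characterisation of $BMO$ for the $BMO$ bound---then deliver both (1) and (2) for $\tau^{\rm I}$, without requiring $m_1<0$.

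For the low-high piece I would work one block $\tau^{\rm II}_j$ at a time, further decomposed in $|\xi|\sim 2^k$ for $k\le j-C$. The $\xi$-dependence of each sub-block lives at scale $\lesssim 2^{j-C}$, so I expand it as a rapidly convergent Fourier series in $\xi/2^{j-C}$, reducing each sub-block to an $\ell^1$-summable superposition of product symbols. This produces the pointwise estimate
\[
|T_{\tau^{\rm II}_{j,k}}(f,g)(x)|\lesssim 2^{(j-k)m_1}\,\widetilde P_k^* f(x)\,\|g\|_{L^\infty},
\]
where $\widetilde P_k^* f$ is a non-tangential maximal function of the Littlewood--Paley block of $f$ at scale $2^{-k}$; moreover $T_{\tau^{\rm II}_j}(f,g)$ is spectrally localised at $|\xi+\eta|\sim 2^j$, allowing Littlewood--Paley summation in $j$. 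For (1), Cauchy--Schwarz in $k$ against the geometrically decaying factor $2^{(j-k)m_1}$ (summable since $m_1<0$) combined with Fefferman--Stein gives the $L^p$ bound; the case $m_1=0$ reduces to Proposition \ref{prop-CM} since then $\tau\in\dot{S}^0_{1,0}(\R^{2n})$. For (2), the $BMO$ output emerges by combining the Carleson measure for $\widetilde P_k^* f$ at scales $2^{-k}\le\ell(Q)$ with the pointwise bound $|\widetilde P_k f|\lesssim\|f\|_{BMO}$ summed against $2^{(j-k)m_1}$ at scales $2^{-k}>\ell(Q)$.

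The main obstacle I anticipate is assembling the two-parameter dyadic structure of the low-high piece into a single Carleson-measure estimate for (2); the strict inequality $m_1<0$ is essential here, furnishing the geometric decay that compensates for the absence of an atomic decomposition for $BMO$. Indeed Part (2) fails when $m_1=0$: take $a_0\equiv a_1\equiv 1$, so $T_\tau(f,g)=fg$, which does not map $BMO\times L^\infty\to BMO$.
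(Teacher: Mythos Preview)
Your overall strategy---splitting into a high-low paraproduct piece and a low-high piece with geometric decay---is the same as the paper's, and your treatment of $\tau^{\rm I}$ is essentially the content of Lemma~\ref{lem-paraproduct-B}.  But there are two genuine gaps.

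First, your two-piece decomposition does not close.  Since $\varphi(2^{C-k}\eta)=0$ only for $|\eta|\ge 2^{k-C+1}$, the complement $\tau^{\rm II}=\tau-\tau^{\rm I}$ is supported where $|\eta|\gtrsim 2^{k-C}$, i.e.\ $|\xi|\lesssim 2^{j+C}$, not $|\xi|\lesssim 2^{j-C}$.  Thus $\tau^{\rm II}$ contains the full diagonal region $|\xi|\approx|\eta|$, and on that part the output of $T_{\tau^{\rm II}_j}$ is \emph{not} spectrally localised to $|\xi+\eta|\sim 2^j$, so your Littlewood--Paley summation in $j$ fails there.  The paper fixes this by a three-way split $\tau=\tau_{\rm I}+\tau_{\rm II}+\tau_{\rm III}$: the diagonal piece $\tau_{\rm II}$ is genuinely in $\dot S^0_{1,0}(\R^{2n})$ and is handled by Proposition~\ref{prop-CM} (plus the Fefferman--Stein decomposition of $BMO$ to upgrade $L^\infty$ to $BMO$), while the strict low-high piece $\tau_{\rm III}$ is handled by the symmetric version of Lemma~\ref{lem-paraproduct-A} (for $m_1<0$) or Lemma~\ref{lem-paraproduct-B} (for $m_1=0$), where spectral localisation does hold.

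Second, your handling of the case $m_1=0$ is incorrect: it is not true that $\tau\in\dot S^0_{1,0}(\R^{2n})$ when $m_1=0$.  Indeed $\partial_\xi a_1(\xi)$ is only $O(|\xi|^{-1})$, not $O((|\xi|+|\eta|)^{-1})$, so on the low-high region the product $a_0 a_1$ fails the $\dot S^0_{1,0}$ derivative bounds.  The paper's remedy for the low-high piece when $m_1=0$ is to factor out $a_1\in\dot S^0_{1,0}(\R^n)\subset\calM(H^p\to H^p)$ as a linear multiplier acting on $f$, after which the remaining symbol (involving only $a_0$) is a clean single-parameter paraproduct in the $\eta$-scale and Lemma~\ref{lem-paraproduct-B} applies.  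Your double-dyadic approach with decay $2^{(j-k)m_1}$ genuinely requires $m_1<0$ and cannot cover $m_1=0$.
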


\section{Proof of Theorem \ref{th-C2}}
\label{C}

In order to prove Theorem \ref{th-C2}, we use the following lemma.

\begin{lem}\label{lem-D1} 
If $\phi \in \calP (\R^n)$ 
and if $\theta \in C_0^{\infty}(\R^n)$ 
satisfy $\supp \theta \subset \{|\xi|\le 2\}$, 
then 
\[
\left\| \left( e^{i \phi (\xi)} \theta (\xi) \right)^{\vee}
\right\|_{L^1} 
\le c \|\theta\|_{C^{n+1}},
\]
 where $c=c(n, \phi)$. 
\end{lem}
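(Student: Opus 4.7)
My plan is to pointwise estimate $(e^{i\phi(\xi)}\theta(\xi))^\vee(x)$ by combining a Littlewood--Paley decomposition of $\theta$ with a rescaling argument that exploits the homogeneity of $\phi$. The main technical difficulty is that $\phi$ is only $C^\infty$ away from the origin: derivatives of $\phi$ of order $k\ge 2$ blow up like $|\xi|^{1-k}$ at $\xi=0$, so $e^{i\phi}\theta$ is not smooth enough at the origin for the standard estimate $\|F^\vee\|_{L^1}\le C\|F\|_{C^{n+1}}$ (valid for $F\in C^{n+1}_c$) to apply directly to $e^{i\phi}\theta$ as a whole.

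Using the dyadic partition from Notation \ref{notation}, I decompose $\theta=\psi_1\theta+\sum_{j\le 0}\psi_j\theta$ with $\psi_j(\xi):=\psi(2^{-j}\xi)$. The piece $\psi_1\theta$ is supported in $\{1\le|\xi|\le 2\}$, where $\phi$ is smooth with bounded derivatives, and the standard compact-support estimate gives a bound by $c(n,\phi)\|\theta\|_{C^{n+1}}$ directly. For each $\psi_j\theta$ with $j\le 0$, the change of variables $\xi=2^j\eta$ together with the homogeneity $\phi(2^j\eta)=2^j\phi(\eta)$ yields the scaling identity
\[
\|(e^{i\phi}\psi_j\theta)^\vee\|_{L^1}
=\|(e^{i 2^j\phi(\eta)}\tau_j(\eta))^\vee\|_{L^1},
\qquad
\tau_j(\eta):=\theta(2^j\eta)\psi(\eta),
\]
reducing the problem to a fixed scale: $\tau_j$ is supported in the fixed annulus $\{1/2\le|\eta|\le 2\}$, on which $\phi$ is $C^\infty$, and $|2^j\phi(\eta)|\le C$ uniformly in $j\le 0$.

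The naive uniform bound $\|(e^{i 2^j\phi}\tau_j)^\vee\|_{L^1}\le C\|\theta\|_{C^{n+1}}$ fails to sum over $j\le 0$, so I would gain summability by splitting off the value of $\theta$ at the origin: fix $\chi\in C_c^\infty(\R^n)$ equal to $1$ on $\{|\xi|\le 2\}$ and write $\theta=\theta(0)\chi+\theta_*$ with $\theta_*(0)=0$. Because $\theta_*$ vanishes at the origin, $\|\theta_*(2^j\,\cdot)\psi\|_{L^2}\lesssim 2^j\|\theta\|_{C^1}$, while $\|\theta_*(2^j\,\cdot)\psi\|_{H^{n+1}}\lesssim\|\theta\|_{C^{n+1}}$ still holds uniformly in $j\le 0$. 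Sobolev interpolation combined with the embedding $\|F^\vee\|_{L^1}\lesssim\|F\|_{H^s}$ (valid for $s>n/2$ and $F$ supported in a fixed ball) yields $\|(e^{i 2^j\phi}\tau_j^{(*)})^\vee\|_{L^1}\lesssim (2^j)^{\varepsilon}\|\theta\|_{C^{n+1}}$ for some $\varepsilon>0$, and the dyadic sum over $j\le 0$ converges.

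The remaining contribution from $\theta(0)\chi$ reduces to proving $\|(e^{i\phi}\chi)^\vee\|_{L^1}\le c(n,\phi)$ for the single fixed bump $\chi$, and this is the step I expect to be the main obstacle. It is a classical oscillatory integral problem: the phase $x\cdot\xi+\phi(\xi)$ is stationary where $\nabla\phi(\xi)=-x$, and by the degree-$0$ homogeneity of $\nabla\phi$ the critical set in $x$ is the compact wave-front surface $\{-\nabla\phi(\omega):\omega\in S^{n-1}\}$. Integration by parts away from this surface gives rapid decay of $(e^{i\phi}\chi)^\vee$, while stationary-phase analysis on the surface itself yields an integrable singularity of order $-(n-1)/2$; together these establish the required $L^1$ bound and complete the proof.
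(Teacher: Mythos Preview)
Your approach can be made to work, but it is considerably more involved than the paper's, and your ``main obstacle'' evaporates entirely under the paper's decomposition.  The paper does not split $\theta$; it splits $e^{i\phi}$, writing
\[
e^{i\phi(\xi)}\theta(\xi)=\theta(\xi)+\sum_{j\le 1}\bigl(e^{i\phi(\xi)}-1\bigr)\theta(\xi)\psi(2^{-j}\xi).
\]
The first term is just $\theta\in C_c^\infty$, handled trivially.  On the $j$-th annulus, the factor $e^{i\phi}-1$ is $O(|\phi(\xi)|)=O(|\xi|)\approx 2^j$, and more generally $\partial^\alpha\bigl((e^{i\phi}-1)\theta\psi(2^{-j}\cdot)\bigr)=O\bigl(\|\theta\|_{C^{n+1}}(2^j)^{1-|\alpha|}\bigr)$ for $|\alpha|\le n+1$.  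This gives $\bigl\|\bigl((e^{i\phi}-1)\theta\psi(2^{-j}\cdot)\bigr)^\vee\bigr\|_{L^1}\lesssim 2^j\|\theta\|_{C^{n+1}}$, and the sum over $j\le 1$ converges geometrically.  No Sobolev interpolation, no stationary phase.

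What you do differently is split on the $\theta$ side ($\theta=\theta(0)\chi+\theta_*$) rather than on the $e^{i\phi}$ side.  Your $\theta_*$-piece gains the needed decay, but the leftover $\theta(0)(e^{i\phi}\chi)^\vee$ forces you into a genuine oscillatory-integral analysis that the paper never needs.  (Incidentally, your description of that analysis is slightly off: since $\chi$ is a fixed smooth bump, $(e^{i\phi}\chi)^\vee$ is bounded---there is no singularity of order $-(n-1)/2$; that behavior arises only for high-frequency pieces.  The issue for $(e^{i\phi}\chi)^\vee$ is decay at infinity, which follows from the $C^{n+1}$-type estimate on $e^{i\phi}\chi$ away from $\xi=0$, i.e.\ exactly the paper's argument applied to $\theta=\chi$.)  So your step~5, if carried out correctly, would in effect rediscover the paper's trick.
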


\begin{proof} 
Write 
\[
e^{i \phi (\xi)} \theta (\xi)
=\theta (\xi) + 
\sum_{j=-\infty}^{1} 
\big( e^{i \phi (\xi)} -1\big) \theta (\xi) \psi (2^{-j}\xi), 
\]
where $\psi$ is the function given in Notation \ref{notation}. 
The inverse Fourier transform of $\theta (\xi)$ 
satisfies 
$
\big| 
(\theta)^{\vee} (x) 
\big| 
\lesssim 
\|\theta\|_{C^{n+1}} (1+|x|)^{-n-1}
$ 
and hence 
$\big\| 
( \theta)^{\vee} \big\|_{L^1} 
\lesssim 
\|\theta \|_{C^{n+1}} $. 
The function $\big(e^{i \phi (\xi) } -1\big) \theta (\xi) \psi (2^{-j}\xi)$ 
has support 
included in $\{2^{j-1}\le |\xi|\le 2^{j+1}\}$ 
and satisfies the estimate 
\[
\left| \partial_{\xi}^{\alpha} 
\big( 
(e^{i \phi (\xi)} -1 ) \theta (\xi) \psi (2^{-j}\xi) 
\big) 
\right| 
\lesssim 
\|\theta \|_{C^{n+1}} (2^j)^{1-|\alpha|}, 
\quad |\alpha|\le n+1.  
\]
From this we obtain 
\[
\left| 
\big( ( e^{i \phi (\xi)} -1 ) 
\theta (\xi) \psi (2^{-j}\xi) 
\big)^{\vee}(x) 
\right| 
\lesssim 
\|\theta\|_{C^{n+1}} 
2^{j (n+1)}\big( 1 + 2^j |x| \big)^{-n-1}
\]
and hence 
$\big\| \big( ( e^{i \phi (\xi)} -1) 
\phi (\xi) \theta (2^{-j}\xi) \big)^{\vee}
\big\|_{L^1} \lesssim 
\|\theta\|_{C^{n+1}} 2^j$. 
Taking sum over $j\le 1$, 
we obtain 
$\big \| 
\big(e^{i \phi (\xi)} -1\big) \phi (\xi) 
\big)^{\vee} \big\|_{L^1} 
\lesssim 
\|\theta\|_{C^{n+1}} $. 
\end{proof}

\begin{proof}[Proof of Theorem \ref{th-C2}]
We write 
$m_1 = -(n-1)|1/p-1/2|$, 
$m_2 = -(n-1)|1/q-1/2|$, 
and $1/p+1/q=1/r$. 
We also use the notation \eqref{def-Xr}.

Using the functions $\zeta$ and $\varphi$ 
of Notation \ref{notation}, 
we decompose $\tau $ as 
\begin{align*}
&
\tau (\xi, \eta) =
\tau_1 (\xi, \eta) + 
\tau_2 (\xi, \eta) + 
\tau_3 (\xi, \eta) +
\tau_4 (\xi, \eta), 
\\
&
\tau_1 (\xi, \eta) = 
e^{i \phi_1 (\xi)} \varphi (\xi) 
e^{i \phi_2 (\eta) } \varphi (\eta) 
\sigma (\xi, \eta), 
\\
&
\tau_2 (\xi, \eta) = 
e^{i \phi_1 (\xi)} \zeta (\xi) 
e^{i \phi_2 (\eta) } \varphi (\eta) 
\sigma (\xi, \eta), 
\\
&
\tau_3 (\xi, \eta) = 
e^{i \phi_1 (\xi)} \varphi (\xi) 
e^{i \phi_2 (\eta) } \zeta (\eta) 
\sigma (\xi, \eta), 
\\
&
\tau_4 (\xi, \eta) = 
e^{i \phi_1 (\xi)} \zeta (\xi) 
e^{i \phi_2 (\eta) } \zeta (\eta) 
\sigma (\xi, \eta). 
\end{align*}
We shall prove 
$\tau_{i} \in \calM (H^p \times H^q \to X_r)$ 
for $i=1, 2, 3, 4$.

Firstly, the multiplier $\tau_1$ is easy to handle.  
By Lemma \ref{lem-D1}, 
the inverse Fourier transform of 
$e^{i \phi_1 (\xi) } \varphi (\xi)$ is in $L^{1}(\R^n)$ 
and hence 
$
e^{i \phi_1 (\xi) } \varphi (\xi) \in \calM (H^p \to H^p)$, 
$1\le p\le \infty$. 
Similarly 
$e^{i \phi_2 (\eta)  } \varphi (\eta) \in \calM (H^q \to H^q)$, 
$1\le q \le \infty$. 
Also 
$\sigma \in \calM (H^p \times H^q \to X_{r})$ 
by Proposition \ref{prop-CM}. 
Combining these facts, 
we have 
$\tau_1 \in \calM (H^p \times H^q \to X_{r})$.

Next, consider $\tau_2$. 
We write this as 
\[
\tau_2 (\xi, \eta) = 
\sigma (\xi, \eta)
\widetilde{\zeta} (\xi) |\xi|^{-m_1}
\cdot 
e^{i \phi_1 (\xi)} \zeta (\xi) |\xi|^{m_1} \cdot 
e^{i \phi_2 (\eta) } \varphi (\eta),  
\] 
where $\widetilde{\zeta}$ is a $C^{\infty}$ function on 
$\R^n$ such that $\widetilde{\zeta}(\xi)=1$ for $|\xi|\ge 1$ 
and $\widetilde{\zeta}(\xi)=0$ for $|\xi|\le 2^{-1}$. 
As we have seen above, 
$e^{i \phi_2 (\eta) } \varphi (\eta) \in \calM (H^q \to H^q)$ 
for $1\le q\le \infty$. 
Theorem A implies  
\[
e^{i \phi_1 (\xi)} \zeta (\xi) |\xi|^{m_1} 
\in 
\begin{cases} 
{\calM (H^p \to  H^p) } & \text{if}\;\;  1\le p <\infty, \\
{\calM (BMO \to BMO) } & {\text{if}\;\; p=\infty} . 
\end{cases}
\] 
Notice that 
$\sigma \in S^{m}_{1,0}(\R^{2n}) 
\subset 
\dotS ^{m_1}_{1,0}(\R^{2n})
$ 
and 
$\widetilde{\zeta} (\xi) |\xi|^{-m_1} \in \dotS ^{-m_1}_{1,0}(\R^n)$. 
Hence Propositions \ref{prop-a0a1a2} and \ref{prop-a0a1} give 
\begin{equation}\label{sigma-zeta}
\sigma (\xi, \eta) 
\widetilde{\zeta} (\xi) |\xi|^{-m_1} 
\in 
\begin{cases}
{\calM (H^p \times H^q \to L^r) } & \text{if}\;\; 1\le p, q <\infty, 
\\
{\calM (H^p \times L^{\infty} \to L^p) } & 
\text{if}\;\; 1\le p<\infty 
\;\;\text{and}\; \; q=\infty, 
\\
{\calM (BMO \times H^q \to L^q) } & 
\text{if}
\;\; p=\infty \;\;\text{and}\; \; 1\le q<\infty, 
\\
{\calM (BMO \times L^{\infty} \to BMO) } & {\text{if}\;\; p=q=\infty} 
\end{cases}
\end{equation} 
(notice that $m_1 < 0$ if $n\ge 2$ and $p=\infty$). 
Combining these results, 
we see that $\tau_2$ belongs to the same multiplier class 
as in \eqref{sigma-zeta}, 
which {\it a fortiori}\/ implies 
$\tau_2 \in \calM (H^p \times H^q \to X_{r})$.

By symmetry, we also have 
$\tau_3 \in \calM (H^p \times H^q \to X_{r})$.

Finally, consider $\tau_4$. 
We write this as 
\[
\tau_4 (\xi, \eta) = 
\sigma (\xi, \eta)
\widetilde{\zeta} (\xi) |\xi|^{-m_1}
\widetilde{\zeta} (\eta) |\eta|^{-m_2}
\cdot 
e^{i \phi_1 (\xi)} \zeta (\xi) |\xi|^{m_1} \cdot 
e^{i \phi_2 (\eta) } \zeta (\eta) |\eta|^{m_2},  
\] 
where $\widetilde{\zeta}$ is the same as above. 
Theorem A gives 
\begin{align*}
&
e^{i \phi_1 (\xi)} \zeta (\xi) |\xi|^{m_1} 
\in 
\begin{cases}
{\calM (H^p \to H^p) } & \text{if}\;\; 1\le p <\infty, \\
{\calM (BMO \to BMO) } & \text{if}\;\; p=\infty, 
\end{cases}
\\
&
e^{i \phi_2 (\eta)} \zeta (\eta) |\eta|^{m_2} 
\in 
\begin{cases}
{\calM (H^q \to  H^q) } & \text{if}\;\; 1\le q <\infty, \\
{\calM (BMO \to BMO) } & \text{if}\;\; q=\infty. 
\end{cases}
\end{align*} 
Proposition \ref{prop-a0a1a2} gives  
\begin{equation}
\label{sigma-zeta-zeta}
\sigma (\xi, \eta) 
\widetilde{\zeta} (\xi) |\xi|^{-m_1} 
\widetilde{\zeta} (\eta) |\eta|^{-m_2}
\in 
\begin{cases}
{\calM (H^p \times H^p \to L^r) } & \text{if}\;\; 1\le p, q <\infty, 
\\
{\calM (H^p \times BMO \to L^p) } & 
\text{if}\;\; 1\le p<\infty \;\;\text{and}\; \; q=\infty, 
\\
{\calM (BMO \times H^q \to L^q) } & 
\text{if}\;\; p=\infty \;\;\text{and}\; \; 1\le q<\infty, 
\\
{\calM (BMO \times BMO \to BMO) } & \text{if}\;\; p=q=\infty
\end{cases}
\end{equation} 
(notice that 
$m_1 < 0$ if $n\ge 2$ and $p=\infty$ 
and 
that $m_2 <0$ if $n\ge 2$ and $q=\infty$). 
Now combining these results, 
we see that $\tau_4$ belongs to the same multiplier class 
as in \eqref{sigma-zeta-zeta}, 
which {\it a fortiori}\/ implies 
$\tau_4 \in \calM (H^p \times H^q \to X_{r})$. 
This completes the proof of Theorem \ref{th-C2}.
\end{proof}

\section{Proof of Theorem \ref{th-E1D4}}
\label{D}

In this section, we prove 
Theorem \ref{th-E1D4}. 
For this, the key is to prove the assertion of 
Theorem \ref{th-E1D4} 
in the special case $p=1$ and $q=\infty$, 
which we shall write here for the sake of reference.

\begin{thm}\label{th-D4} 
If  
$n\ge 2$, $\phi_1, \phi_2 \in \calP (\R^n)$, 
and $\sigma \in S^{-n/2}_{1,0}(\R^{2n})$, 
then 
$e^{i (\phi_1 (\xi)+\phi_2 (\eta) )} \sigma (\xi, \eta)
\in \calM (H^1 \times L^{\infty}\to L^1)$.  
\end{thm}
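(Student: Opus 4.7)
The plan is to prove the bound $\|T(a,g)\|_{L^1}\lesssim\|g\|_{L^\infty}$ uniformly for all $H^1$-atoms $a$ (supported in some ball $B(x_0,r)$ with $\int a = 0$ and $\|a\|_{L^\infty}\le r^{-n}$) and all $g \in L^\infty$; the theorem then follows from the $H^1$ atomic decomposition. I begin by splitting the symbol via a bidyadic Littlewood--Paley decomposition $\sigma = \sum_{j,k\ge 0}\sigma_{j,k}$, where $\sigma_{j,k}$ is supported in $\{|\xi|\sim 2^j,\,|\eta|\sim 2^k\}$ (with $j=0$ or $k=0$ designating low-frequency cutoffs).

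For the pieces in which one frequency variable remains low (say $k$ bounded and $j$ large), I freeze the $\eta$-integration by Fourier inversion to realize $T_{\sigma_{j,k}}(\cdot,g)$ as a linear amplitude-type operator $f\mapsto \int e^{ix\cdot\xi+i\phi_1(\xi)}M(\xi,x)\hat f(\xi)\,d\xi$ with amplitude $M(\xi,x) = \bigl[e^{i\phi_2(D_x)}\sigma_{j,k}(\xi,D_x)\bigr]g(x)$. Since $\sigma \in S^{-n/2}_{1,0}\subset S^{-(n-1)/2}_{1,0}$, the amplitude $M(\xi,x)$ is a symbol of order at most $-(n-1)/2$ in $\xi$ with $x$-bounds controlled by $\|g\|_{L^\infty}$, so an amplitude-form version of the linear Seeger--Sogge--Stein theorem delivers the desired $H^1\to L^1$ estimate. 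The symmetric case $j$ bounded, $k$ large is analogous, and the low--low piece is absorbed directly by Proposition \ref{prop-CM} after extracting a smooth $S^0$-type factor.

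The main case is the diagonal $j\sim k$. Here I apply the Coifman--Meyer Fourier-series expansion on a box of scale $2^j$ to write $\sigma_{j,j}(\xi,\eta) = 2^{-jn/2}\sum_\nu c^{(j)}_\nu\, a^{(j)}_\nu(\xi)\, b^{(j)}_\nu(\eta)$ with $\sum_\nu|c^{(j)}_\nu|\lesssim 1$, reducing the bilinear operator to a sum of products $(e^{i\phi_1(D)}a^{(j)}_\nu(D)a)(e^{i\phi_2(D)}b^{(j)}_\nu(D)g)$. The crude bound $\|\cdot\|_{L^1}\|\cdot\|_{L^\infty}$ via two linear SSS norms loses a factor $2^{j(n-1)}$ and thus fails against $2^{-jn/2}$ for $n\ge 3$. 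To recover the missing half-derivative I refine via the Seeger--Sogge--Stein angular decomposition of width $2^{-j/2}$; the image $e^{i\phi_1(D)}a^{(j)}_\nu(D)a$ is then essentially concentrated in a thin tube of small volume around the wave front of $\supp a$. Restricting $g$ to this tube gives $\|e^{i\phi_2(D)}b^{(j)}_\nu(D)(g\chi_{\mathrm{tube}})\|_{L^2}\lesssim |\mathrm{tube}|^{1/2}\|g\|_{L^\infty}$, and Cauchy--Schwarz against $\|e^{i\phi_1(D)}a^{(j)}_\nu(D)a\|_{L^2}\lesssim\|a\|_{L^2}$ supplies the required gain, with the contributions off the tube handled by the rapid decay of the wave kernel away from the wave front.

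The hard part will be organizing the angular summation so that, after refining by $\sim 2^{j(n-1)/2}$ sectors of width $2^{-j/2}$ and summing over both the sectors and the dyadic scales $j$, the tube-localization gain is preserved. This requires an almost-orthogonality (or $TT^*$-type) argument across distinct angular tubes, together with a separate, simpler treatment of atoms with large radius $r\gtrsim 1$ by reduction to the small-radius case along the lines of the linear Seeger--Sogge--Stein argument.
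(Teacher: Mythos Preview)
Your proposal gets the scaffolding right—bidyadic decomposition, Fourier-series reduction on each piece, atomic decomposition for $H^1$—but the diagonal argument you sketch is more complicated than needed, and it leaves the genuinely hard step (``organizing the angular summation\dots almost-orthogonality across distinct angular tubes'') as an unexecuted plan. The paper avoids any angular decomposition in the bilinear analysis entirely, and the replacement idea is simple enough that you should use it.

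Here is what you are missing. After reducing (via the Fourier series you describe) to the model multiplier $\widetilde\sigma(\xi,\eta)=\sum_{j\ge 1}c_j\,\theta_1(2^{-j}\xi)\theta_2(2^{-j}\eta)$ with $|c_j|\le 2^{-jn/2}$, and after stripping off the easy low-frequency factors, the paper localizes $g$ to a \emph{fixed} ball $B$ of radius $\approx R$ (determined by $\sup|\nabla\phi_i|$), not to $j$-dependent angular tubes of width $2^{-j/2}$. On this fixed ball, Cauchy--Schwarz and the plain $L^2$-boundedness of $e^{i\phi_2(D)}\theta_2(2^{-j}D)$ give
\[
\bigl\| \big(e^{i\phi_1(D)}\theta_1(2^{-j}D)f\big)\big(e^{i\phi_2(D)}\theta_2(2^{-j}D)(g\mathbf{1}_B)\big)\bigr\|_{L^1}
\lesssim \|e^{i\phi_1(D)}\theta_1(2^{-j}D)f\|_{L^2}\,\|g\|_{L^\infty}.
\]
For an atom $f$ of radius $r\le R$ the $L^2$-norm on the right is bounded two ways: trivially by $\|f\|_{L^2}\lesssim r^{-n/2}$, and, using the vanishing moment $\int f=0$ together with Plancherel, by $r\,\|\xi\,\theta_1(2^{-j}\xi)\|_{L^2}\lesssim 2^{j(n/2+1)}r$. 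Hence the $j$-sum is
\[
\sum_{j\ge 1}2^{-jn/2}\min\{r^{-n/2},\,2^{j(n/2+1)}r\}=\sum_{j\ge 1}\min\{(2^jr)^{-n/2},\,2^jr\}\lesssim 1,
\]
and this is exactly where $m=-n/2$ (rather than $-(n-1)/2$) is used. No sectors, no tube geometry, no $TT^\ast$. The remaining pieces—$g\mathbf{1}_{B^c}$, the region $|x|$ large, and atoms with $r>R$—are all handled by the pointwise kernel decay $\bigl|(e^{i\phi(\xi)}\zeta(\xi)\theta(2^{-j}\xi))^\vee(x)\bigr|\lesssim|x|^{-N}$ for $|x|>2R$, together with the $L^1$-kernel bound $\lesssim 2^{j(n-1)/2}$ (Lemmas~\ref{lem-220925} and~\ref{lem-D3}); the Seeger--Sogge--Stein sector decomposition is used only to prove these kernel lemmas, not in the bilinear estimate itself.

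A smaller gap in your case division: the region $1\ll k\ll j$ (both frequencies high, far from the diagonal) is not covered by either your ``$k$ bounded'' amplitude argument or your diagonal argument. Your amplitude bound fails there because $\|e^{i\phi_2(D)}\psi(2^{-k}D)g\|_{L^\infty}$ already costs $2^{k(n-1)/2}$. The paper sidesteps this by first summing over $k<j$ to obtain $\sum_j\sigma(\xi,\eta)\psi(2^{-j}\xi)\varphi(2^{-j+1}\eta)$ and only then applying the Fourier-series expansion at scale $2^j$; this folds all off-diagonal pieces into the same model form as the diagonal, so no separate treatment is needed.
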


Theorem \ref{th-E1D4} can be deduced from this theorem and from 
Theorem \ref{th-C2}. 
In fact, notice that, by obvious symmetry, 
we have 
$e^{i (\phi_1 (\xi)+\phi_2 (\eta) )} \sigma (\xi, \eta) 
\in 
\calM (L^{\infty} \times H^1 \to L^1)$ 
under the same assumptions on $n$ and $\sigma$. 
Hence, 
if Theorem \ref{th-D4} is proved, 
then we can deduce the claims of 
Theorem \ref{th-E1D4} 
from the claims of Theorems \ref{th-C2} and \ref{th-D4} 
with the aid of complex interpolation. 
(For the interpolation argument, see, e.g., 
\cite[Proof of Theorem 2.2]{BBMNT} 
or \cite[Proof of the `if' part of Theorem 1.1]{Miyachi-Tomita-IUMJ}.)  
Thus it is sufficient to prove Theorem \ref{th-D4}. 

To prove Theorem \ref{th-D4}, we use the following lemmas.

\begin{lem}\label{lem-D2} 
If $\phi_1, \phi_2 \in \calP (\R^n)$ 
and $\theta \in C_{0}^{\infty} (\R^{2n})$, 
then 
$\left( e^{i (\phi_1 (\xi)+\phi_2 (\eta) )} 
\theta (\xi, \eta)\right)^{\vee} \in L^1 (\R^{2n})$. 
\end{lem}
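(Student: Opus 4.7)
The plan is to factor $e^{i(\phi_1(\xi)+\phi_2(\eta))}\theta(\xi,\eta)$ as the product of a tensor-product exponential (separable in $\xi$ and $\eta$) and the smooth compactly supported factor $\theta$, and then exploit the fact that pointwise multiplication becomes convolution under Fourier inversion, which Young's inequality controls in $L^{1}(\R^{2n})$.

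Concretely, pick $R>0$ large enough that $\supp\theta\subset\{(\xi,\eta)\in\R^{2n}:|\xi|\le R,\ |\eta|\le R\}$ and fix $\chi\in C_0^{\infty}(\R^n)$ with $\chi\equiv 1$ on $\{|\xi|\le R\}$. Since $\chi(\xi)=\chi(\eta)=1$ on $\supp\theta$, we have the pointwise identity
\[
e^{i(\phi_1(\xi)+\phi_2(\eta))}\theta(\xi,\eta)=A(\xi,\eta)\,\theta(\xi,\eta),\qquad A(\xi,\eta):=\bigl(e^{i\phi_1(\xi)}\chi(\xi)\bigr)\bigl(e^{i\phi_2(\eta)}\chi(\eta)\bigr),
\]
as functions on all of $\R^{2n}$. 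Because $A$ has tensor-product structure, its inverse Fourier transform factors as $A^{\vee}(x,y)=(e^{i\phi_1}\chi)^{\vee}(x)(e^{i\phi_2}\chi)^{\vee}(y)$. Each one-variable factor lies in $L^{1}(\R^n)$ by Lemma \ref{lem-D1}, applied after a trivial dilation that brings $\supp\chi$ inside $\{|\xi|\le 2\}$ (the dilation preserves the class $\calP$, since $\lambda\phi\in\calP$ for every $\phi\in\calP$ and $\lambda>0$, and it preserves the $L^{1}$-norm of the inverse Fourier transform). Hence $A^{\vee}\in L^{1}(\R^{2n})$.

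Since $\theta\in C_0^{\infty}(\R^{2n})\subset\calS(\R^{2n})$, its inverse Fourier transform belongs to $\calS(\R^{2n})\subset L^{1}(\R^{2n})$. Translating the pointwise multiplication $A\cdot\theta$ into convolution on $\R^{2n}$ and invoking Young's inequality then yields
\[
\bigl\|\bigl(e^{i(\phi_1(\xi)+\phi_2(\eta))}\theta(\xi,\eta)\bigr)^{\vee}\bigr\|_{L^{1}(\R^{2n})}=\|A^{\vee}*\theta^{\vee}\|_{L^{1}(\R^{2n})}\le\|A^{\vee}\|_{L^{1}}\,\|\theta^{\vee}\|_{L^{1}}<\infty,
\]
which is the desired conclusion. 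There is no serious obstacle here; the only mildly technical point is the dilation step needed to apply Lemma \ref{lem-D1} to a cutoff whose support may exceed $\{|\xi|\le 2\}$, and that is entirely routine.
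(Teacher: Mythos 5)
Your proof is correct and follows essentially the same route as the paper: both factor the multiplier as a separable cutoff $e^{i\phi_1(\xi)}\chi(\xi)\,e^{i\phi_2(\eta)}\chi(\eta)$ times $\theta$, apply Lemma \ref{lem-D1} to each one-variable factor, and conclude via the convolution identity for inverse Fourier transforms of products. Your explicit treatment of the dilation needed to meet the support hypothesis of Lemma \ref{lem-D1} is a reasonable extra detail that the paper leaves implicit.
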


\begin{proof}
Take a function $\widetilde{\theta} \in C_{0}^{\infty}(\R^n)$ 
such that $\widetilde{\theta} (\xi) \widetilde{\theta} (\eta)=1$ 
on $\supp \theta$. 
Then 
\[
e^{i (\phi_1 (\xi)+\phi_2 (\eta) )} \theta (\xi, \eta)
=
e^{i (\phi_1 (\xi)+\phi_2 (\eta) )} \widetilde{\theta} (\xi) 
\widetilde{\theta} (\eta) 
\theta (\xi, \eta). 
\]
Lemma \ref{lem-D1} implies 
$\big( e^{i (\phi_1 (\xi)+\phi_2 (\eta) )} \widetilde{\theta} (\xi) 
\widetilde{\theta} (\eta)
\big)^{\vee} \in L^1 (\R^{2n})$. 
Obviously $\big( \theta (\xi, \eta)\big)^{\vee} \in L^1 (\R^{2n})$. 
Hence the conclusion of the lemma follows. 
\end{proof}

\begin{lem}\label{lem-220925}
Let $n\ge 2$ and $\phi \in \calP (\R^n)$ and set 
$R = \sup \big\{ |\nabla \phi (\xi)| \mid |\xi|=1 \big\}$.  
Let $\psi$ be a $C^{\infty}$ function on $\R^n$ 
satisfying $\supp \psi \subset \{2^{-1}\le |\xi|\le 2\}$. 
Then the following hold. 

$(1)$ For each positive integer $N$, 
there  exists a constant $c_N$ depending only on $n, \phi$, and $N$ 
such that 
\[
\left| \big( e^{-i \phi (\xi)} 
\psi (2^{-j}\xi)\big)^{\vee} (x) \right| 
\le c_N \|\psi\|_{C^N} (2^j)^{n-\frac{N}{2}} |x|^{-N} 
\quad \text{for}\;\; 
|x|>2R\;\; \text{and}\;\; j\in \N. 
\]

$(2)$ 
There  exists a constant $c$ depending only on $n$ and $\phi$ 
such that 
\begin{equation*}
\left\| 
\left( e^{- i \phi (\xi) } \psi (2^{-j}\xi)\right)^{\vee} (x)
\right\|_{L^1}
\le 
c  
\|\psi\|_{C^{2n-1}} 
( 2^{j} )^{\frac{n-1}{2}} 
\quad \text{for all}\;\; j \in \N. 
\end{equation*}
\end{lem}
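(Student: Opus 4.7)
The proof starts by rewriting the kernel as a rescaled oscillatory integral. Changing variables $\xi=2^j\eta$ and using the homogeneity $\phi(2^j\eta)=2^j\phi(\eta)$, one obtains
\begin{equation*}
K_j(x):=\bigl(e^{-i\phi(\xi)}\psi(2^{-j}\xi)\bigr)^{\vee}(x)=\frac{2^{jn}}{(2\pi)^n}\int_{\R^n}e^{i2^j\Phi_x(\eta)}\psi(\eta)\,d\eta, \qquad \Phi_x(\eta)=x\cdot\eta-\phi(\eta).
\end{equation*}
Since $\nabla\phi$ is homogeneous of degree $0$, the bound $|\nabla\phi(\eta)|\le R$ holds on $\R^n\setminus\{0\}$, and each $\partial_\eta^\alpha\phi(\eta)$ with $|\alpha|\ge 1$ is bounded uniformly on the annulus $\{1/2\le|\eta|\le 2\}$.

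For part $(1)$, the hypothesis $|x|>2R$ forces $|\nabla_\eta\Phi_x(\eta)|\ge|x|/2$ on $\supp\psi$. I would apply $N$ times the transpose of $L=(i2^j|\nabla\Phi_x|^2)^{-1}\nabla\Phi_x\cdot\nabla_\eta$, which satisfies $Le^{i2^j\Phi_x}=e^{i2^j\Phi_x}$. Each application of ${}^tL$ contributes a factor of order $(2^j|x|)^{-1}$ and at most one $\eta$-derivative falling on $\psi$, so since $\supp\psi$ has bounded volume, one obtains the pointwise estimate $|K_j(x)|\lesssim\|\psi\|_{C^N}\,2^{j(n-N)}|x|^{-N}$, which implies the claim because $2^{j(n-N)}\le 2^{j(n-N/2)}$ for all $j\ge 0$ and all $N\ge 0$.

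For part $(2)$, I would carry out the classical Seeger--Sogge--Stein second dyadic (angular) decomposition. Fix a maximal $2^{-j/2}$-separated set $\{\eta_k\}\subset S^{n-1}$, so $\#\{\eta_k\}\asymp 2^{j(n-1)/2}$, together with an associated smooth partition of unity $\{\chi_k\}$ on $S^{n-1}$ with $\supp\chi_k$ in a cap of radius $\sim 2^{-j/2}$ about $\eta_k$ and $|\partial_\omega^\alpha\chi_k|\lesssim 2^{j|\alpha|/2}$. Set $\psi_{j,k}(\eta)=\psi(\eta)\chi_k(\eta/|\eta|)$ and let $K_{j,k}$ denote the corresponding piece of $K_j$. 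Splitting the $\eta$-gradient into one radial direction (along $\eta_k$) and $n-1$ tangential ones, and introducing an anisotropic integration-by-parts operator that exploits the phase scales $2^j$ radially and $2^{j/2}$ tangentially, one derives the pointwise estimate
\begin{equation*}
|K_{j,k}(x)|\lesssim\|\psi\|_{C^{2n-1}}\,2^{j(n+1)/2}\bigl(1+2^j|\langle x-\nabla\phi(\eta_k),\eta_k\rangle|+2^{j/2}|x-\nabla\phi(\eta_k)|\bigr)^{-N}
\end{equation*}
for a fixed large $N$; the derivative count $2n-1$ accounts for one radial and $2(n-1)$ tangential integrations by parts needed to make the anisotropic denominator integrable. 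The change of variables matching the scales $(2^j,2^{j/2},\dots,2^{j/2})$ has Jacobian $2^{-j(n+1)/2}$, which precisely cancels the prefactor and gives $\|K_{j,k}\|_{L^1}\lesssim 1$ uniformly in $k$. Summing over the $\asymp 2^{j(n-1)/2}$ pieces then yields the claimed $L^1$ bound.

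The main technical obstacle is constructing the anisotropic integration-by-parts operator and verifying that on each $2^{-j/2}$-cap the gradient $\nabla\Phi_x(\eta)$ is, up to errors absorbed by the denominator, essentially $\eta_k\cdot(x-\nabla\phi(\eta_k))$ radially and $x-\nabla\phi(\eta_k)$ tangentially. This is the heart of the Seeger--Sogge--Stein method, and I would model my implementation on \cite{SSS} or \cite[Chapter~IX]{S}.
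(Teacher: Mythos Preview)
Your outline for part~(2) is the same Seeger--Sogge--Stein angular decomposition the paper uses. The paper linearises the phase about each cap centre, treats $e^{ih_j^\nu}$ as part of the amplitude, and then obtains two separate pointwise bounds: one from arbitrary-direction integration by parts (each amplitude derivative costs $2^{-j/2}$), giving decay $(1+2^{j/2}|x-\nabla\phi(\xi_j^\nu)|)^{-N}$, and one from integration by parts in the $\xi_j^\nu$ direction (each such derivative costs $2^{-j}$), giving decay $(1+2^j|\xi_j^\nu\cdot(x-\nabla\phi(\xi_j^\nu))|)^{-N}$. Taking the geometric mean with $N=2n-1$ and integrating yields $\|f_j^\nu\|_{L^1}\lesssim\|\psi\|_{C^{2n-1}}$, and the sum over $\nu$ gives the factor $2^{j(n-1)/2}$. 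Your phrasing ``one radial and $2(n-1)$ tangential integrations by parts'' is not quite the right accounting: read literally it would produce a product $(1+2^j|\cdot|)^{-1}(1+2^{j/2}|\cdot|)^{-2(n-1)}$ whose radial factor is not integrable. The correct mechanism is either the paper's two-estimate-then-combine route, or an anisotropic rescaling of the linearised integral to a unit box, after which $N$ derivatives give the sum-form decay $(1+\cdot)^{-N}$ and integrability needs only $N>n$. Either way the $C^{2n-1}$ dependence and the final bound are as you state.

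For part~(1) your route genuinely differs from the paper's. The paper reuses the angular decomposition: from the isotropic bound above, $|x|>2R$ forces $|x-\nabla\phi(\xi_j^\nu)|\approx|x|$, so each piece satisfies $|f_j^\nu(x)|\lesssim\|\psi\|_{C^N}2^{j(n+1)/2}(2^{j/2}|x|)^{-N}$, and summing over the $\approx 2^{j(n-1)/2}$ caps produces exactly $2^{j(n-N/2)}|x|^{-N}$. You instead bypass the decomposition entirely and apply non-stationary phase directly to the full integral, using that $|\nabla\Phi_x|\ge|x|/2$ on $\supp\psi$ when $|x|>2R$; this is correct and yields the stronger bound $2^{j(n-N)}|x|^{-N}$, which you then relax to the stated one. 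The trade-off is that the paper's single decomposition handles both (1) and (2) simultaneously and explains why the exponent $n-N/2$ (rather than $n-N$) is natural, whereas your argument is shorter for (1) alone but requires a separate setup for (2).
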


\begin{proof} 
%
%
%
We write 
$f_j (x) = 
\left( e^{-i \phi (\xi)} 
\psi (2^{-j}\xi)\right)^{\vee} (x)$.

To estimate $f_j (x)$, 
we follow the idea given by 
Seeger--Sogge--Stein \cite{SSS}. 
Let 
$S^{n-1}= \{\xi \in \R^n \mid |\xi|=1\}$. 
For each $j\in \N$, 
take a sequence of points $\{\xi_{j}^{\nu}\}_{\nu}$ such that 
\begin{align*}
&
\xi_{j}^{\nu} \in S^{n-1}, 
\\
&
\bigcup_{\nu} 
B(\xi_{j}^{\nu}, 2^{-j/2}) \cap S^{n-1}=
S^{n-1}, 
\\
&
\sum_{\nu} \ichi_{B(\xi_{j}^{\nu}, 2^{-j/2+1})}(\xi) \le c
\;\; \text{for all}\;\;
\xi \in S^{n-1}, 
\end{align*}
where $B(x,r)$ denotes the ball with center $x$ and radius $r$, 
and $\nu$ runs on an index set of cardinality 
$\approx (2^{j/2})^{n-1}$. 
Take 
functions $\{\chi_{j}^{\nu}\}_{\nu}$ such that 
 \begin{align*}
&
\chi_{j}^{\nu}\;\; \text{is homogeneous 
of degree $0$ and $C^{\infty}$ on $\R^n\setminus \{0\}$}, 
\\
&
\{ \xi \in S^{n-1} \mid 
\chi_{j}^{\nu} (\xi) \neq 0 \} 
\subset 
\{\xi \in S^{n-1} \mid |\xi - \xi_{j}^{\nu}|< 2^{-j/2 + 1}\}, 
\\
&
|\partial_{\xi}^{\alpha} 
\chi_{j}^{\nu} (\xi)|
\le c_{\alpha} (2^{j/2})^{|\alpha|}
\;\; \text{for all}\;\;
\xi \in S^{n-1}, 
\\
&
\sum_{\nu} \chi_{j}^{\nu} (\xi)=1 
\;\; \text{for all}\;\;
\xi \in \R^{n}\setminus \{0\}. 
\end{align*}
Using this partition of unity, 
we decompose  
$f_{j}(x)$ as 
\begin{align*}
&
f_{j}(x)
=
\sum_{\nu} 
f_{j}^{\nu}(x), 
\\
&
f_{j}^{\nu}(x)=
\left( 
e^{-i \phi (\xi) }
\psi ( 2^{-j}\xi ) 
\chi_{j}^{\nu}(\xi) 
\right)^{\vee} (x) 
=
\frac{1}{(2\pi)^n} 
\int 
e^{i ( \xi \cdot x -\phi (\xi)) }
\psi ( 2^{-j}\xi ) \chi_{j}^{\nu}(\xi) 
\, d\xi. 
\end{align*}

The key idea is that 
the oscillating factor 
$e^{-i \phi (\xi)}$ can be 
well approximated by $e^{-i \xi \cdot \nabla \phi (\xi_{j}^{\nu}) }$ 
on the support of $\psi (2^{-j}\xi) \chi_{j}^{\nu}(\xi)$. 
We write the phase function $\xi \cdot x - \phi (\xi)$ 
appearing in the last integral 
as 
\begin{align*}
&\xi \cdot x -\phi (\xi)
=
\xi \cdot \big(x - \nabla \phi (\xi)\big)
=
\xi \cdot \big(x - \nabla \phi (\xi_{j}^{\nu}) \big)
+ 
h_{j}^{\nu}(\xi), 
\\
&
h_{j}^{\nu}(\xi)
=
\xi \cdot \big(
\nabla \phi (\xi_{j}^{\nu})
- \nabla \phi (\xi)
\big).  
\end{align*}
Then 
\[
f_{j}^{\nu}(x)=
\frac{1}{(2\pi)^{n}}
\int_{\R^n} 
e^{i \xi\cdot (x- \nabla \phi (\xi_{j}^{\nu}))} \, 
\psi ( 2^{-j}\xi ) 
\chi_{j}^{\nu}(\xi) 
e^{i h_{j}^{\nu}(\xi)}\, d\xi. 
\]
Notice that the support of $\psi ( 2^{-j}\xi ) 
\chi_{j}^{\nu}(\xi) $ is included in the set 
\[
E_{j, \nu} = \left\{ \xi \; \bigg|\;   
2^{j-1} \le |\xi| \le 2^{j+1}, 
\;\; \bigg|\frac{\xi}{|\xi|} - \xi_{j}^{\nu}\bigg|  \le 
2^{-{j}/{2} +1}\right\}, 
\]
which has Lebesgue measure 
$|E_{j, \nu} |\approx (2^j)^{\frac{n+1}{2}}$. 
The functions appearing in the above integral 
satisfy the following estimates 
on $E_{j, \nu} $:  
\begin{align*}
&
|\partial_{\xi}^{\alpha} 
\psi (2^{-j}\xi)|
\le c_{\alpha} \|\psi \|_{C^{|\alpha|}} 
(2^{j})^{-|\alpha|}, 
\\
&
|\partial_{\xi}^{\alpha} 
\chi_{j}^{\nu} (\xi)|
\le c_{\alpha} (2^{\frac{j}{2}})^{-|\alpha|},  
\\
&
|(\xi_{j}^{\nu} \cdot \nabla_{\xi})^{k} 
\chi_{j}^{\nu} (\xi)|
\le c_{k} (2^{j})^{-k}, 
\\
&
|\partial_{\xi}^{\alpha} 
e^{i h_{j}^{\nu} (\xi)}|
\le c_{\alpha} (2^{\frac{j}{2}})^{-|\alpha|}, 
\\
&
|(\xi_{j}^{\nu} \cdot \nabla_{\xi})^{k} 
e^{i h_{j}^{\nu} (\xi)}|
\le c_{k} (2^{j})^{-k}. 
\end{align*}
By using these estimates and 
by integration by parts, 
we obtain the following two estimates:  
\begin{align}
&
|f_{j}^{\nu}(x)|
\le 
c_{N} 
\|\psi \|_{C^N} 
(2^{j})^{\frac{n+1}{2}}
\big(1+2^{\frac{j}{2}} |x- \nabla \phi (\xi_{j}^{\nu}) |\big)^{-N}, 
\label{fjnu-2j/2}
\\
&
|f_{j}^{\nu}(x)|
\le 
c_{N} 
\|\psi \|_{C^N} 
(2^{j})^{\frac{n+1}{2}}
\big(1+2^j |\xi_{j}^{\nu} \cdot (x- \nabla \phi (\xi_{j}^{\nu}) )|\big)^{-N}. 
\label{fjnu-2j}
\end{align}

{\it Proof of (1).}\/  
Suppose $|x|>2 R$. 
Then $|x- \nabla \phi (\xi_{j}^{\nu})|\approx |x|$ 
and hence 
\eqref{fjnu-2j/2} gives 
\[
|f_{j}^{\nu}(x)|
\lesssim 
\|\psi \|_{C^N} 
(2^{j})^{\frac{n+1}{2}}
\big( 2^{\frac{j}{2}} |x|\big)^{-N}. 
\] 
Taking sum over $\nu$'s of card $\approx (2^{\frac{j}{2}})^{n-1}$, 
we have 
\[
|f_j (x)|\le \sum_{\nu} 
|f_{j}^{\nu}(x)|
\lesssim 
\|\psi \|_{C^N} 
(2^{j})^{\frac{n+1}{2}}
\big( 2^{\frac{j}{2}} |x|\big)^{-N} (2^{\frac{j}{2}})^{n-1}
=
\|\psi \|_{C^N} 
\big( 2^{j} \big)^{n -\frac{N}{2}} 
|x|^{-N}. 
\]

{\it Proof of (2).}\/  
Combining   \eqref{fjnu-2j} and \eqref{fjnu-2j/2}, we have 
\begin{align*}
|f_{j}^{\nu}(x)|
&
\le 
c_N 
\|\psi \|_{C^N} 
(2^{j})^{\frac{n+1}{2}}
\big(1+2^j |\xi_{j}^{\nu} \cdot (x- \nabla \phi (\xi_{j}^{\nu}) )|\big)^{-\frac{N}{2}}
\big(1+2^{\frac{j}{2}} |x- \nabla \phi (\xi_{j}^{\nu})| \big)^{-\frac{N}{2}}
\\
&
\le 
c_N \|\psi \|_{C^N} 
(2^{j})^{\frac{n+1}{2}}
\big(1+2^j 
\big|\xi_{j}^{\nu} \cdot (x- \nabla \phi (\xi_{j}^{\nu}) ) \big|
\big)^{-\frac{N}{2}}
\big(1+2^{\frac{j}{2}} 
\big|(x- \nabla \phi (\xi_{j}^{\nu}) )^{\prime}\big| 
\big)^{-\frac{N}{2}}, 
\end{align*}
where 
$(x- \nabla \phi (\xi_{j}^{\nu}) )^{\prime}$ denotes the orthogonal 
projection of $x- \nabla \phi (\xi_{j}^{\nu})$ to the orthogonal complement of 
the line $\R \xi_{j}^{\nu} $. 
Taking $N=2n-1$ and integrating the above inequality 
we have 
\begin{align*}
\|f_{j}^{\nu}\|_{L^1}
&
\lesssim  
\|\psi \|_{C^{2n-1}} 
(2^{j})^{\frac{n+1}{2}}
\int_{\R^n}
\big(1+2^j 
\big|\xi_{j}^{\nu} \cdot (x- \nabla \phi (\xi_{j}^{\nu}) ) \big|
\big)^{-\frac{2n-1}{2}}
\big(
1+2^{\frac{j}{2}} 
\big| \big(x- \nabla \phi (\xi_{j}^{\nu}) \big)^{\prime} \big| 
\big)^{-\frac{2n-1}{2}}\, dx
\\
&
\approx  
\|\psi \|_{C^{2n-1}}.  
\end{align*}
Taking sum over $\nu$'s of card $\approx (2^{\frac{j}{2}})^{n-1}$, 
we obtain the inequality as mentioned in (2). 
This completes the proof of 
Lemma \ref{lem-220925}. 
\end{proof}

\begin{lem}\label{lem-D3} 
Let $n \ge 2$, 
$\phi \in \calP (\R^n)$, and set 
$R = \sup \big\{ |\nabla \phi (\xi)| \mid |\xi|=1 \big\}$. 
Let $\zeta$ be the function given in Notation \ref{notation} 
and let 
$\theta \in C_{0}^{\infty} (\R^{n})$ 
satisfy $\supp \theta \subset \{|\xi|\le 2\}$. 
Then the following hold. 

$(1)$ 
For each positive integer $N> 2n$, 
there exists a constant $c_N$ depending only on $n, \phi$, and $N$ 
such that
\begin{equation*}
\left| 
\left( 
e^{i \phi (\xi)} 
\zeta (\xi) \theta (2^{-j}\xi) 
\right)^{\vee} (x)
\right| 
\le c_N \|\theta\|_{C^N} \, |x|^{-N}
\quad 
\text{for}\;\; 
|x|>2R\;\; 
\text{and for all}\;\; j \in \N.   
\end{equation*}

$(2)$ 
There exists a constant $c$ 
depending only on $n$ and $\phi$ 
such that
\begin{equation*}
\left\| 
\left( 
e^{i \phi (\xi)} 
\zeta (\xi) \theta (2^{-j}\xi) 
\right)^{\vee} 
\right\|_{L^1}
\le c \|\theta\|_{C^{2n-1}} \, 
{2^{j \frac{n-1}{2}}} . 
\end{equation*}
\end{lem}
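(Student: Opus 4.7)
The plan is to decompose $\zeta(\xi)\theta(2^{-j}\xi)$ dyadically and reduce both estimates to Lemma \ref{lem-220925} applied to each piece. Since $\zeta(\xi)=\sum_{k=1}^{\infty}\psi(2^{-k}\xi)$ by construction,
\[
e^{i\phi(\xi)}\zeta(\xi)\theta(2^{-j}\xi)
=\sum_{k=1}^{\infty} e^{i\phi(\xi)}\psi(2^{-k}\xi)\theta(2^{-j}\xi).
\]
Because $\supp\theta\subset\{|\xi|\le 2\}$ forces $\theta(2^{-j}\xi)$ to vanish on $\{|\xi|\ge 2^{j+1}\}$, each summand is identically zero for $k\ge j+3$, so the sum is in fact finite with at most $j+2$ nonzero terms.

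To put each summand in the form required by Lemma \ref{lem-220925}, I rescale by setting $\Psi_{k,j}(\eta)=\psi(\eta)\theta(2^{k-j}\eta)$, so that $\psi(2^{-k}\xi)\theta(2^{-j}\xi)=\Psi_{k,j}(2^{-k}\xi)$. Then $\Psi_{k,j}$ is supported in $\{1/2\le|\eta|\le 2\}$, and for $1\le k\le j+2$ the dilation factor satisfies $2^{k-j}\le 4$, so the Leibniz formula gives $\|\Psi_{k,j}\|_{C^N}\lesssim\|\theta\|_{C^N}$ with a constant independent of $j$ and $k$. Observing that $-\phi\in\calP$ and that the associated radius $\sup\{|\nabla(-\phi)(\xi)|:|\xi|=1\}$ equals $R$, Lemma \ref{lem-220925} applied with $-\phi$ in place of $\phi$ and $\Psi_{k,j}$ in place of the generic $\psi$ supplies the desired estimates on each summand.

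For part $(1)$, Lemma \ref{lem-220925}(1) with $N>2n$ yields, for $|x|>2R$,
\[
\bigl|\bigl(e^{i\phi(\xi)}\Psi_{k,j}(2^{-k}\xi)\bigr)^{\vee}(x)\bigr|
\lesssim \|\theta\|_{C^N}(2^k)^{n-N/2}|x|^{-N},
\]
and since $n-N/2<0$ the series $\sum_{k=1}^{\infty}(2^k)^{n-N/2}$ converges, which gives $(1)$. For part $(2)$, Lemma \ref{lem-220925}(2) gives
\[
\bigl\|\bigl(e^{i\phi(\xi)}\Psi_{k,j}(2^{-k}\xi)\bigr)^{\vee}\bigr\|_{L^1}
\lesssim \|\theta\|_{C^{2n-1}}(2^k)^{(n-1)/2},
\]
and since $(n-1)/2>0$ for $n\ge 2$, the sum $\sum_{k=1}^{j+2}(2^k)^{(n-1)/2}$ is a geometric progression with ratio $>1$, so it is dominated by its largest term, $(2^{j+2})^{(n-1)/2}\lesssim 2^{j(n-1)/2}$, yielding $(2)$. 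The only substantive point is maintaining the $j,k$-uniform $C^N$-bound on $\Psi_{k,j}$, which is precisely what the truncation $k\le j+2$ secures; everything else is bookkeeping once Lemma \ref{lem-220925} is in hand.
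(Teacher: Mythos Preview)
Your proof is correct and follows essentially the same approach as the paper: dyadically decompose $\zeta$ via $\psi(2^{-k}\cdot)$, observe that the product with $\theta(2^{-j}\cdot)$ truncates the sum to $k\lesssim j$, apply Lemma~\ref{lem-220925} to each piece with the rescaled function $\psi(\cdot)\theta(2^{k-j}\cdot)$, and sum the resulting geometric series. Your explicit remark that one applies Lemma~\ref{lem-220925} to $-\phi\in\calP$ (since that lemma is stated for $e^{-i\phi}$) is a point the paper leaves implicit; otherwise the arguments coincide.
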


\begin{proof} 
From the definition of $\zeta$ and 
from the assumption on $\supp \theta$, we have 
\[
e^{i \phi (\xi)} 
\zeta (\xi) \theta (2^{-j}\xi) 
=
\sum_{k=1}^{j+1} 
e^{i \phi (\xi)} 
\psi (2^{-k}\xi) \theta (2^{-j}\xi) . 
\]

If $|x|>2R$ and $1\le k\le j+1$, then 
Lemma \ref{lem-220925} (1) gives 
\begin{align*}
\left| \left( e^{i \phi (\xi)} 
\psi (2^{-k}\xi) \theta  (2^{-j}\xi)
\right)^{\vee}(x) \right|
&
\lesssim 
(2^{k}) ^{n-\frac{N}{2}}
|x|^{-N}
\left\| 
\psi (\cdot ) \theta (2^{k-j}\cdot )
\right\|_{C^{N}} 
\\
&
\lesssim 
(2^{k}) ^{n-\frac{N}{2}}
|x|^{-N}
\left\| 
\theta 
\right\|_{C^{N}} . 
\end{align*}
If $N>2n$, then taking sum over $k$, 
we obtain the inequality mentioned in (1). 

For $1\le k\le j+1$, 
Lemma \ref{lem-220925} (2) gives 
\begin{equation*}
\left\|
\left( e^{i \phi (\xi)} 
\psi (2^{-k} \xi) \theta (2^{-j}\xi)
\right)^{\vee}(x) 
\right\|_{L^1}
\lesssim 
\left\| 
\psi (\cdot ) \theta (2^{k-j}\cdot )
\right\|_{C^{2n-1}}\,  
(2^{k})^{\frac{n-1}{2}} 
\lesssim 
\left\| 
\theta 
\right\|_{C^{2n-1}}\,  
(2^{k})^{\frac{n-1}{2}} . 
\end{equation*}
Taking sum over $k\le j+1$,  
we obtain the inequality mentioned in (2). 
Lemma \ref{lem-D3} is proved.  
\end{proof}

\begin{proof}[Proof of Theorem \ref{th-D4}] 
We write $m=-n/2$ and assume $\sigma \in S^{m}_{1,0}(\R^{2n})$.

We use the dyadic partition of unity to decompose $\sigma (\xi, \eta)$. 
Let $\psi$, $\varphi$, and $\zeta$ be the functions as given in 
Notation \ref{notation}.  
For $j \in \N \cup \{0\}$, we define $\psi_j$ by 
\[
\psi_j (\xi)= 
\begin{cases}
{\varphi (\xi)} & \text{if}\;\; j=0,  \\
{\psi (2^{-j} \xi)} & \text{if}\;\; j\ge 1. 
\end{cases}
\]
Notice that 
$\sum_{j=0}^{\infty} \psi_j (\xi) =1$ 
and 
$\sum_{j=0}^{k} \psi_j (\xi) = \varphi (2^{-k} \xi)$ 
for $k\in \N \cup \{0\}$.

We decompose $\sigma$ as 
\begin{align*}
\sigma (\xi, \eta) 
&=
\sum_{j=0}^{\infty} 
\sum_{k=0}^{\infty} 
\sigma (\xi, \eta) \psi_j (\xi) \psi_k (\eta)
\\
&
=\sum_{j>k}
+ 
\sum_{j=k}
+
\sum_{j<k}
\\
&
=
\sigma_{\RomI} (\xi, \eta)
+
\sigma_{\II} (\xi, \eta)
+
\sigma_{\III} (\xi, \eta), 
\end{align*}
where 
$\sum_{j>k}$, 
$\sum_{j=k}$, and 
$\sum_{j<k}$ 
denote the sums of 
$\sigma (\xi, \eta) \psi_j (\xi) \psi_k (\eta)$ 
over $(j,k)\in (\N \cup \{0\})^2$ that 
satisfy the designated restrictions.

Consider the multiplier $\sigma_{\RomI}$. 
This is written as 
\[
\sigma_{\RomI}(\xi, \eta)
=
\sum_{j=1}^{\infty} \sum_{k=0}^{j-1} 
\sigma (\xi, \eta) \psi_j (\xi) \psi_k (\eta)
=
\sum_{j=1}^{\infty} 
\sigma (\xi, \eta) \psi (2^{-j}\xi) \varphi (2^{-j+1}\eta). 
\]
Take a function $\widetilde{\psi}\in C_{0}^{\infty}(\R^n)$ such 
that 
$\supp \widetilde{\psi} \subset \{3^{-1}\le |\xi| \le 3\}$ 
and $\widetilde{\psi} (\xi)=1$  
for $2^{-1}\le |\xi| \le 2$. 
Also 
take 
a function $\widetilde{\varphi}\in C_{0}^{\infty}(\R^n)$ such 
that 
$\supp \widetilde{\varphi} \subset \{|\xi| \le 3\}$ 
and $\widetilde{\varphi} (\xi)=1$  
for $|\xi| \le 2$. 
Then 
\[
\sigma_{\RomI}(\xi, \eta)
=
\sum_{j=1}^{\infty} 
\sigma (\xi, \eta) 
\widetilde{\psi}(2^{-j}\xi) 
\widetilde{\varphi}(2^{-j+1}\eta) 
\psi (2^{-j}\xi) \varphi (2^{-j+1}\eta). 
\]
The function 
$\sigma (2^j \xi, 2^{j-1}\eta)\widetilde{\psi}(\xi) 
\widetilde{\varphi}(\eta)$ 
is supported in $\{3^{-1}\le |\xi|\le 3\}\times 
\{|\eta|\le 3\}$ and satisfies the estimate 
\begin{equation*}
\left|
\partial^{\alpha}_{\xi} 
\partial^{\beta}_{\eta} 
\big\{ 
\sigma (2^j \xi, 2^{j-1}\eta)\widetilde{\psi}(\xi) 
\widetilde{\varphi}(\eta)
\big\} 
\right| 
\le 
C_{\alpha, \beta}\,  2^{jm}
\end{equation*}
with $C_{\alpha, \beta}$ independent of $j\in \N$. 
Hence by the Fourier series expansion we can write 
\[
\sigma (2^j \xi, 2^{j-1}\eta)\widetilde{\psi}(\xi) 
\widetilde{\varphi}(\eta)
=
\sum_{a,b \in \Z^n} 
c_{\RomI, j}^{(a,b)} 
e^{i a \cdot \xi} e^{i b \cdot \eta}, 
\quad 
|\xi|<\pi, \;\; |\eta|<\pi,  
\]
with the coefficient satisfying 
\begin{equation}\label{cjab-decay}
\left| c_{\RomI, j}^{(a,b)} \right| 
\lesssim 
2^{jm} 
(1+|a|)^{-L} (1+|b|)^{-L}
\end{equation}
for any $L>0$. 
Changing variables 
$\xi \to 2^{-j}\xi$ and $\eta \to 2^{-j+1}\eta$
and 
multiplying $\psi (2^{-j}\xi) \varphi (2^{-j+1}\eta)$, 
we obtain 
\[
\sigma (\xi, \eta)
\psi (2^{-j}\xi) 
\varphi (2^{-j+1}\eta)
=
\sum_{a,b \in \Z^n} 
c_{\RomI, j}^{(a,b)} 
e^{i a \cdot 2^{-j} \xi} e^{i b \cdot 2^{-j+1}\eta}
\psi (2^{-j}\xi) \varphi (2^{-j+1}\eta).  
\]
Hence $\sigma_{\RomI}$ is written as follows: 
\begin{align*}
\sigma_{\RomI} (\xi, \eta)
&=
\sum_{a,b \in \Z^n}
\sum_{j=1}^{\infty}
c_{\RomI, j}^{(a,b)} 
e^{i a \cdot 2^{-j} \xi} e^{i b \cdot 2^{-j+1}\eta}
\psi (2^{-j} \xi) 
\varphi (2^{-j+1} \eta) 
\\
&
=
\sum_{a,b \in \Z^n}
\sum_{j=1}^{\infty}
c_{\RomI, j}^{(a,b)} 
\psi^{(a)} (2^{-j} \xi) 
\varphi^{(b)} (2^{-j+1} \eta), 
\end{align*}
where 
\begin{equation}\label{psinu-varphinu}
\psi^{(\nu)} (\xi)=e^{i \nu \cdot \xi}\psi (\xi), 
\quad  
\varphi^{(\nu)} (\eta)=e^{i \nu \cdot \eta}\varphi (\eta), 
\quad 
\nu \in \Z^n. 
\end{equation}

By a similar argument, 
$\sigma_{\II}$ and $\sigma_{\III}$ can be written as follows: 
\begin{align*}
&
\sigma_{\II} (\xi, \eta)
=
\sigma (\xi, \eta) \psi_0 (\xi) \psi_0 (\eta)
+ 
\sum_{a,b \in \Z^n}
\sum_{j=1}^{\infty}
c_{\II, j}^{(a,b)} 
\psi^{(a)} (2^{-j} \xi) 
\psi^{(b)} (2^{-j} \eta) , 
\\
&\sigma_{\III} (\xi, \eta)
=
\sum_{a,b \in \Z^n}
\sum_{j=1}^{\infty}
c_{\III, j}^{(a,b)} 
\varphi^{(a)} (2^{-j+1} \xi) 
\psi^{(b)} (2^{-j} \eta), 
\end{align*}
where the coefficients 
$c_{\II, j}^{(a,b)}$ and $c_{\III, j}^{(a,b)}$ 
satisfy the same estimates as \eqref{cjab-decay}.

Hereafter we shall consider a slightly general multiplier. 
We assume the multiplier $\widetilde{\sigma}$ is given by 
\begin{equation}\label{assumption-sigma}
\widetilde{\sigma} (\xi, \eta) 
=
\sum_{j=1}^{\infty}
c_{j} 
\theta_1 (2^{-j} \xi) 
\theta_2 (2^{-j} \eta), 
\end{equation}
where 
$(c_j)_{j\in \N}$ is a sequence of complex numbers satisfying 
\begin{equation}\label{assumption-cj}
\left| c_{j} \right| 
\le 2^{jm} A, \quad j\in \N, 
\end{equation}
with some $A\in (0, \infty)$,  
and $\theta_1$ and $\theta_2$ are 
functions in $C_0^{\infty} (\R^n)$ such that 
\begin{equation}\label{assumption-theta}
\supp \theta_1 , \, \supp \theta_2 
\subset 
\{|\xi|\le 2\}. 
\end{equation}
For such $\widetilde{\sigma}$, we shall prove the estimate 
\begin{equation}\label{Goal-estimate}
\left\| 
e^{i (\phi_1 (\xi)+\phi_2 (\eta))}\, 
\widetilde{\sigma} (\xi, \eta) 
\right\|_{\calM (H^1 \times L^{\infty}\to L^1)}
\le c A \|\theta_1\|_{C^N} \|\theta_2\|_{C^N},  
\end{equation}
with  
$c=c(n)\in (0, \infty)$ and 
$N=N(n)\in \N$.

If this is proved, then by applying it to 
$c_j = c_{\RomI, j}^{(a,b)}$,  
$\theta_1=\psi^{(a)}$, and $\theta_2 = \varphi^{(b)}(2 \cdot)$,  
we obtain 
\begin{align*}
&
\left\| 
e^{i (\phi_1 (\xi)+\phi_2 (\eta))}
\sum_{j=1}^{\infty}
c_{\RomI, j}^{(a,b)} 
\psi^{(a)} (2^{-j} \xi) 
\varphi^{(b)} (2^{-j+1} \eta)
\right\|_{\calM (H^1 \times L^{\infty}\to L^1)}
\\
&
\lesssim (1+|a|)^{-L}
(1+|b|)^{-L} \|\psi^{(a)}\|_{C^N} \|\varphi^{(b)}(2 \cdot)\|_{C^N} 
\lesssim (1+|a|)^{-L+N}
(1+|b|)^{-L+N}, 
\end{align*}
and, thus, taking $L$ sufficiently large and taking sum over 
$a, b \in \Z^n$, 
we obtain 
\[
e^{i (\phi_1 (\xi)+\phi_2 (\eta))} \sigma_{\RomI}(\xi, \eta)
\in \calM (H^1 \times L^{\infty}\to L^1). 
\]  
In the same way, we obtain 
\[
e^{i (\phi_1 (\xi)+\phi_2 (\eta))} \big( \sigma_{\II}(\xi, \eta) - 
\sigma (\xi, \eta) \psi_0 (\xi)\psi_0(\eta)\big)
\in \calM (H^1 \times L^{\infty}\to L^1)
\]
and 
\[
e^{i (\phi_1 (\xi)+\phi_2 (\eta))} \sigma_{\III}(\xi, \eta)
\in \calM (H^1 \times L^{\infty}\to L^1). 
\]
Since $e^{i (\phi_1 (\xi)+\phi_2 (\eta))} 
\sigma (\xi, \eta)\psi_0 (\xi)\psi_0(\eta)$ 
is also a multiplier for 
$H^1 \times L^{\infty}\to L^1$ by virtue of Lemma \ref{lem-D2}, 
we will obtain the conclusion of the theorem.

Thus the proof is reduced to showing \eqref{Goal-estimate} 
for $\widetilde{\sigma}$ given by \eqref{assumption-sigma}, 
\eqref{assumption-cj}, and \eqref{assumption-theta}.

We shall make a further reduction. 
As in the proof of Theorem \ref{th-C2}, 
using the functions $\varphi$ and $\zeta$ 
of Notation \ref{notation}, 
we decompose the multiplier 
$e^{i (\phi_1 (\xi)+\phi_2 (\eta))}\, 
\widetilde{\sigma} (\xi, \eta)$ into four parts:  
\begin{align*}
&
e^{i (\phi_1 (\xi)+\phi_2 (\eta))}\, 
\widetilde{\sigma} (\xi, \eta)
=
\tau_1 (\xi, \eta) + \tau_2 (\xi, \eta) + \tau_3 (\xi, \eta) +\tau_4 (\xi, \eta), 
\\
&
\tau_1 (\xi, \eta) = 
e^{i \phi_1 (\xi)} \varphi (\xi) e^{i \phi_2 (\eta)} \varphi (\eta) 
\widetilde{\sigma} (\xi, \eta), 
\\
&
\tau_2 (\xi, \eta) = 
e^{i \phi_1 (\xi)} \zeta (\xi) e^{i \phi_2 (\eta)} \varphi (\eta) 
\widetilde{\sigma} (\xi, \eta), 
\\
&
\tau_3 (\xi, \eta) = 
e^{i \phi_1 (\xi)} \varphi (\xi) e^{i \phi_2 (\eta)} \zeta (\eta) 
\widetilde{\sigma} (\xi, \eta), 
\\
&
\tau_4 (\xi, \eta) = 
e^{i \phi_1 (\xi)} \zeta (\xi) e^{i \phi_2 (\eta)} \zeta (\eta) 
\widetilde{\sigma} (\xi, \eta).  
\end{align*}

The multipliers $\tau_1$, $\tau_2$, and $\tau_3$ are easy to handle. 
For $\tau_1$, its inverse Fourier transform is given by 
\begin{equation*}
\left( \tau_1(\xi, \eta) \right)^{\vee} (x,y)
= 
\sum_{j=1}^{\infty}
c_j 
\left( e^{i \phi_1 (\xi)} \varphi (\xi) \theta_1 (2^{-j} \xi) \right)^{\vee} (x) 
\left(  e^{i \phi_2 (\eta)} \varphi (\eta) \theta_2 (2^{-j} \eta) \right)^{\vee}(y).  
\end{equation*}
By Lemma \ref{lem-D1}, we have 
\begin{equation}\label{L1-varphi-theta1}
\left\| 
\big( e^{i \phi_1 (\xi)} \varphi (\xi) \theta_1 (2^{-j} \xi) \big)^{\vee} 
\right\|_{L^1} 
\lesssim 
\|\theta_1\|_{C^{n+1}}
\end{equation}
and similar estimate 
with $\theta_2$ in place of $\theta_1$. 
Thus 
\begin{align*}
\left\| \left( \tau_1 \right)^{\vee} \right\|_{L^1 (\R^{2n})}
&
\le 
\sum_{j=1}^{\infty} 
2^{jm} A 
\left\| 
\left( e^{i \phi_1 (\xi)} \varphi (2^{-j} \xi) \theta_1 (2^{-j} \xi) \right)^{\vee} 
\right\|_{L^1(\R^n)} 
\left\| 
\left( e^{i \phi_2 (\eta)} \varphi (\eta) \theta_2 (2^{-j} \eta) \right)^{\vee} 
\right\|_{L^1(\R^n)} 
\\
&
\lesssim 
\sum_{j=1}^{\infty} 
2^{jm} 
A 
\|\theta_1\|_{C^{n+1}}
\|\theta_2\|_{C^{n+1}} 
\approx 
A 
\|\theta_1\|_{C^{n+1}}
\|\theta_2\|_{C^{n+1}}, 
\end{align*}
which implies
\begin{equation*}
\left\| \tau_1 \right\|_{\calM (H^1 \times L^{\infty} \to L^1)} 
\lesssim 
A 
\|\theta_1\|_{C^{n+1}}
\|\theta_2\|_{C^{n+1}}. 
\end{equation*}
For $\tau_2$, 
we use the estimate 
\begin{equation}\label{L1-zeta-theta1}
\left\| 
\left( e^{i \phi_1 (\xi)} \zeta (\xi) \theta_1 (2^{-j} \xi) \right)^{\vee} 
\right\|_{L^1(\R^n)} 
\lesssim 2^{j\frac{n-1}{2}}\|\theta_1\|_{C^{2n-1}}, 
\end{equation}
which is given in Lemma \ref{lem-D3} (2). 
Using this together with \eqref{L1-varphi-theta1}, we obtain 
\begin{align*}
&
\left\| \tau_2 \right\|_{\calM (H^1 \times L^{\infty} \to L^1)} 
\le 
\left\| \left( \tau_2 \right)^{\vee} \right\|_{L^1 (\R^{2n})}
\\
&
= 
\left\| 
\sum_{j=1}^{\infty}
c_j 
\left( e^{i \phi_1 (\xi)} \zeta (\xi) 
\theta_1 (2^{-j} \xi) \right)^{\vee} (x) 
\left(  e^{i \phi_2 (\eta)} \varphi (\eta) 
\theta_2 (2^{-j} \eta) \right)^{\vee}(y)
\right\|_{L^1_{x,y}(\R^{2n})} 
\\
&
\lesssim 
\sum_{j=1}^{\infty} 
2^{jm} 
A \, 
2^{j\frac{n-1}{2}} 
\|\theta_1\|_{C^{2n-1}}
\|\theta_2\|_{C^{n+1}} 
\approx 
A 
\|\theta_1\|_{C^{2n-1}}
\|\theta_2\|_{C^{n+1}},  
\end{align*}
where the last $\approx$ holds because 
$m<-(n-1)/2$. 
Similarly, we have 
\[
\left\| \tau_3 \right\|_{\calM (H^1 \times L^{\infty} \to L^1)} 
\le 
\left\| \left( \tau_3 \right)^{\vee} \right\|_{L^1 (\R^{2n})}
\lesssim 
A 
\|\theta_1\|_{C^{n+1}}
\|\theta_2\|_{C^{2n-1}}. 
\]

Thus the rest of the proof is the estimate for $\tau_4$. 
Our purpose is to prove the estimate 
\[
\left\|
T_{\tau_4} (f,g) 
\right\|_{L^1}
\lesssim 
A
\|\theta_1\|_{C^{N}}
\|\theta_2\|_{C^{N}}
\|f\|_{H^1} 
 \|g\|_{L^{\infty}}. 
\]
To prove this, by virtue of the atomic decomposition of $H^1$, 
it is sufficient to prove the uniform estimate of 
$\left\|
T_{\tau_4} (f,g) 
\right\|_{L^1}$ for $H^1$-atoms $f$. 
By translation, we may assume that the 
$H^1$-atoms are supported on balls centered at the origin. 
Thus we assume 
\[
\supp f \subset \{|x|\le r\}, 
\quad 
\|f\|_{L^{\infty}}\le r^{-n}, 
\quad 
\int f(x)\, dx = 0, 
\]
and we shall prove 
\[
\left\|
T_{\tau_4} (f,g) 
\right\|_{L^1}
\lesssim 
A
\|\theta_1\|_{C^{N}}
\|\theta_2\|_{C^{N}}
 \|g\|_{L^{\infty}}. 
\]
Recall that the bilinear operator $T_{\tau_4}$ is given by 
\[
T_{\tau_4} (f,g)(x)
=
\sum_{j=1}^{\infty} c_j 
\left( 
e^{i \phi_1 (D) } \zeta (D) \theta_1 (2^{-j} D) f\right) (x) 
\left( 
e^{i \phi_2 (D)} \zeta (D) \theta_2 (2^{-j} D) g\right) (x). 
\]

We set $R = 1+ \max_{i=1,2} \sup \big\{ |\nabla \phi_i (\xi)| \mid |\xi|=1 \big\}$.

Firstly, consider the case $r>R$. 
In this case we estimate the $L^1$ norm as 
\begin{align*}
\left\|
T_{\tau_4} (f,g) 
\right\|_{L^1}
&
\le 
\sum_{j=1}^{\infty} 
2^{jm} A 
\left\| 
e^{i \phi_1 (D)} \zeta (D) \theta_1 (2^{-j} D) f
\right\|_{L^1}
\left\|  
e^{i \phi_2 (D)} \zeta (D) \theta_2 (2^{-j} D) g
\right\|_{L^{\infty}}
\\
&
=: (\ast). 
\end{align*}
For the $L^\infty$-norm involving $g$, 
we use Lemma \ref{lem-D3} (2) to obtain 
\begin{equation}\label{SjgLinfty}
\begin{split}
\left\| 
e^{i \phi_2 (D)} \zeta (D) \theta_2 (2^{-j} D) g 
\right\|_{L^{\infty}}
&
\le 
\left\| 
\left( e^{i \phi_2 (\eta)} \zeta (\eta) \theta_2 (2^{-j}\eta) 
\right)^{\vee} 
\right\|_{L^1} \|g\|_{L^{\infty}}
\\
&
\lesssim 
2^{j \frac{n-1}{2}}\|\theta_2\|_{C^{2n-1}}
\|g\|_{L^{\infty}}. 
\end{split}
\end{equation}
For the $L^1$ norm of 
$e^{i \phi_1 (D)} \zeta (D) \theta_1 (2^{-j} D) f (x)$ 
on $|x|\le 3r$, 
we use the Cauchy--Schwarz inequality to obtain 
\begin{align*}
\left\| 
\left( e^{i \phi_1 (D)} \zeta (D) \theta_1 (2^{-j} D) f\right) (x)
\right\|_{L^1(|x|\le 3r)}
&\lesssim 
r^{n/2} 
\left\| 
\left( e^{i \phi_1 (D)} \zeta (D) \theta_1 (2^{-j} D) f\right) (x)
\right\|_{L^2(|x|\le 3r)}
\\
&
\lesssim 
r^{n/2} \|\theta_1\|_{C^0} \|f\|_{L^2}
\lesssim 
\|\theta_1\|_{C^0}.  
\end{align*}
If $|x|>3r$ and $|y|\le r$, 
then 
$|x-y|>2r >2R$. 
Hence, for $|x|>3r$, using Lemma \ref{lem-D3} (1), we see that 
\begin{align*}
&
\left| 
e^{i \phi_1 (D)} \zeta (D) \theta_1 (2^{-j} D) f (x)
\right|
\\
&
=
\left| 
\int 
\left( e^{i \phi_1 (\xi)} \zeta (\xi) \theta_1 (2^{-j} \xi) 
\right)^{\vee} (x-y) \, f(y)\, dy  
\right|
\\
&
\lesssim 
\int_{|y|\le r} 
\|\theta_1\|_{C^N} 
|x-y|^{-N} \, |f(y)|\, dy  
\lesssim 
\|\theta_1\|_{C^N}  
|x|^{-N}, 
\end{align*}
which implies 
\[
\left\| 
 e^{i \phi_1 (D)} \zeta (D) \theta_1 (2^{-j} D) f (x)
\right\|_{L^1(|x|> 3r)}
\lesssim 
 \|\theta_1\|_{C^N} 
\int_{|x|>3r} |x|^{-N}\, dx   
\lesssim 
 \|\theta_1\|_{C^N} . 
\]
Combining the above estimates, 
we have 
\begin{equation}\label{SjfL1}
\left\| 
e^{i \phi_1 (D)} \zeta (D) \theta_1 (2^{-j} D) f (x)
\right\|_{L^1}
\lesssim 
 \|\theta_1\|_{C^N}.  
 \end{equation}
Now from \eqref{SjgLinfty} and 
\eqref{SjfL1}, 
we obtain 
\[
(\ast) 
\lesssim 
\sum_{j=1}^{\infty} 
2^{jm} A 
\|\theta_1\|_{C^N} 
2^{j \frac{n-1}{2}}\|\theta_2\|_{C^N}
\|g\|_{L^{\infty}}
\approx 
A \|\theta_1\|_{C^N} 
\|\theta_2\|_{C^N}
\|g\|_{L^{\infty}}, 
\]
where the last $\approx$ holds because $m< -(n-1)/2$.

Secondly, we assume $r\le R$ and estimate the $L^1$ norm of 
$T_{\tau_4}(f,g)(x)$ on $|x|>3R$. 
We estimate this as 
\begin{align*}
&\left\|
T_{\tau_4} (f,g) (x) 
\right\|_{L^1(|x|>3R)}
\\
&
\le 
\sum_{j=1}^{\infty} 
2^{jm} A 
\left\| 
e^{i \phi_1 (D)} \zeta (D) \theta_1 (2^{-j} D) f (x) 
\right\|_{L^1(|x|>3R)}
\left\|  
e^{i \phi_2 (D)} \zeta (D) \theta_2 (2^{-j} D) g (x) 
\right\|_{L^{\infty}(|x|>3R)}
\\
&
=: (\ast \ast). 
\end{align*}
For the $L^{\infty}$ norm involving $g$, we have 
\eqref{SjgLinfty}. 
If $|x|>3R$ and $|y|\le r \le R$, then 
$|x-y|>2R$. 
Hence, 
for $|x|>3R$, 
Lemma \ref{lem-D3} (1) yields 
\begin{align*}
&
\left| 
e^{i \phi_1 (D)} \zeta (D) \theta_1 (2^{-j} D) f (x) 
\right|
=
\left| 
\int 
\left( e^{i \phi_1 (\xi)} \zeta (\xi) \theta_1 (2^{-j} \xi) \right)^{\vee} 
(x-y)\,  f (y)\, dy 
\right|
\\
&
\lesssim 
\int_{|y|\le r}   
\| \theta_1 \|_{C^{N}} |x-y|^{-N}\,  |f (y)|\, dy 
\lesssim 
\| \theta_1 \|_{C^{N}} |x|^{-N}. 
\end{align*}
This implies 
\[
\left\| 
e^{i \phi_1 (D)} \zeta (D) \theta_1 (2^{-j} D) f (x) 
\right\|_{L^1(|x|>3R)} 
\lesssim 
\| \theta_1 \|_{C^{N}}. 
\]
Thus we obtain 
\[
(\ast \ast) 
\lesssim 
\sum_{j=1}^{\infty} 
2^{jm} A 
\|\theta_1\|_{C^N} 
2^{j \frac{n-1}{2}}\|\theta_2\|_{C^N}
\|g\|_{L^{\infty}}
\approx 
A \|\theta_1\|_{C^N} 
\|\theta_2\|_{C^N}
\|g\|_{L^{\infty}}, 
\]
where we used $m< -(n-1)/2$ again.

Thirdly, we assume $r\le R$ and estimate the $L^1$ norm of 
$T_{\tau_4}(f,g)(x)$ on $|x|\le 3R$. 
We set 
$B=\{x \in \R^n \mid |x|\le 5R\}$ and decompose 
$g$ as $g=g\ichi_{B}+g\ichi_{B^c}$.

For the $L^1 (|x|\le 3R)$ norm of 
$T_{\tau_4}(f,g\ichi_{B^c})(x)$, we have 
\begin{align*}
&\left\|
T_{\tau_4} (f,g\ichi_{B^c}) (x) 
\right\|_{L^1(|x|\le 3R)}
\\
&
\le 
\sum_{j=1}^{\infty} 
2^{jm} A 
\left\| 
e^{i \phi_1 (D) } \zeta (D) \theta_1 (2^{-j} D) f (x) 
\right\|_{L^1(|x|\le 3R)}
\left\|  
e^{i \phi_2 (D)} \zeta (D) \theta_2 (2^{-j} D) \left( g\ichi_{B^c}\right) (x) 
\right\|_{L^{\infty}(|x|\le 3R)}
\\
&
=: (\ast \ast \ast). 
\end{align*}
Using Lemma \ref{lem-D3} (2), 
we have 
\begin{equation*}
\left\| 
e^{i \phi_1 (D)} \zeta (D) \theta_1 (2^{-j} D) f (x) 
\right\|_{L^1(|x|\le 3R)} 
\le 
\left\| 
\left( e^{i \phi_1 (\xi)} \zeta (\xi) \theta_1 (2^{-j}\xi) 
\right)^{\vee} 
\right\|_{L^1} 
\|f\|_{L^1}
\lesssim 
2^{j \frac{n-1}{2}} 
\| \theta_1 \|_{C^{N}}. 
\end{equation*}
If $|x|\le 3R$ and $|y|> 5R$, then 
$|x-y|>2R$. 
Hence, 
for $|x|\le 3R$, 
we use Lemma \ref{lem-D3} (1) to have 
\begin{align*}
&
\left| 
e^{i \phi_2 (D)} \zeta (D) \theta_2 (2^{-j} D) 
\left( g\ichi_{B^c} \right) (x) 
\right|
=
\left| 
\int_{|y|>5R} 
\left( e^{i \phi_2 (\eta)} \zeta (\eta) \theta_2 (2^{-j} \eta) \right)^{\vee} 
(x-y)\,  g (y)\, dy 
\right|
\\
&
\lesssim 
\int_{|y|>5R }   
\| \theta_2 \|_{C^{N}} |x-y|^{-N}\,  \| g \|_{L^{\infty}}\, dy 
\approx  
\| \theta_2 \|_{C^{N}} \| g \|_{L^{\infty}}. 
\end{align*}
Thus  
\[
(\ast \ast \ast) 
\lesssim 
\sum_{j=1}^{\infty} 
2^{jm} A \, 2^{j \frac{n-1}{2}}
\|\theta_1\|_{C^N} 
\|\theta_2\|_{C^N}
\|g\|_{L^{\infty}}
\approx 
A \|\theta_1\|_{C^N} 
\|\theta_2\|_{C^N}
\|g\|_{L^{\infty}}, 
\]
where we used $m< -(n-1)/2$ again.

Finally, we estimate the $L^1$ norm of   
$T_{\tau_4} (f, g \ichi_{B})(x)$ on $|x|\le 3R$. 
For this, we use the Cauchy--Schwarz inequality to have 
\begin{align*}
&\left\|
T_{\tau_4} (f,g\ichi_B) (x) 
\right\|_{L^1(|x|\le 3R)}
\\
&
\le 
\sum_{j=1}^{\infty} 
2^{jm} A 
\left\| 
e^{i \phi_1 (D)} \zeta (D) \theta_1 (2^{-j} D) f 
\right\|_{L^2}
\left\|  
e^{i \phi_2 (D)} \zeta (D) \theta_2 (2^{-j} D) \left( g\ichi_B \right)  
\right\|_{L^2}
\\
&
=: (\ast \ast \ast \ast). 
\end{align*}
For the $L^2$ norm involving $g \ichi_B$, 
we have 
\begin{equation}\label{L2g1B}
\left\|  
e^{i \phi_2 (D)} \zeta (D) \theta_2 (2^{-j} D) \left( g\ichi_B \right)  
\right\|_{L^2}
\lesssim  
\left\| \theta_2 \right\|_{C^0}
\left\| g\ichi_B \right\|_{L^2}
\lesssim 
\left\| \theta_2 \right\|_{C^0}
\| g \|_{L^{\infty}}. 
\end{equation}
We estimate 
the $L^2$ norm of $e^{i \phi_1 (D)} \zeta (D) \theta_1 (2^{-j} D) f $ in two ways. 
Firstly, we have 
\begin{equation}\label{L2f-1}
\left\|  
e^{i \phi_1 (D)} \zeta (D) \theta_1 (2^{-j} D) f   
\right\|_{L^2}
\lesssim  
\left\| \theta_1 \right\|_{C^0}
\| f \|_{L^2}
\lesssim 
 r^{-n/2} 
\left\| \theta_1 \right\|_{C^0}. 
\end{equation}
On the other hand, 
using the moment condition of $f$, 
we can write 
\begin{align*}
&
e^{i \phi_1 (D)} \zeta (D) \theta_1 (2^{-j} D) f  (x)
\\
&
= 
\int 
\bigg\{ 
\left( e^{i \phi_1 (\xi)} \zeta (\xi) \theta_1 (2^{-j} \xi) \right)^{\vee} (x-y) 
-
\left( e^{i \phi_1 (\xi)} \zeta (\xi) \theta_1 (2^{-j} \xi) \right)^{\vee} (x) 
\bigg\} 
f(y)\, dy
\\
&
=
\iint_{
\substack{
{0<t<1}
\\
{|y|\le r}  }  }
\nabla 
\left( e^{i \phi_1 (\xi)} \zeta (\xi) \theta_1 (2^{-j} \xi) \right)^{\vee} (x-t y) 
\cdot (-y)\, f(y)\, dt dy. 
\end{align*}
Hence 
\begin{align*}
\left\|  
e^{i \phi_1 (D)} \zeta (D) \theta_1 (2^{-j} D) f   
\right\|_{L^2}
&
\lesssim  
\left\| 
\nabla 
\left( e^{i \phi_1 (\xi)} \zeta (\xi) \theta_1 (2^{-j} \xi) \right)^{\vee} 
\right\|_{L^2} 
\int_{|y|\le r} |y|\, |f(y)|\, dy
\\
&
\lesssim 
r\, 
\left\| 
\nabla 
\left( e^{i \phi_1 (\xi)} \zeta (\xi) \theta_1 (2^{-j} \xi) \right)^{\vee} 
\right\|_{L^2}. 
\end{align*}
By Plancherel's theorem, 
\begin{align*}
\left\| 
\nabla 
\left( e^{i \phi_1 (\xi)} \zeta (\xi) \theta_1 (2^{-j} \xi) \right)^{\vee} 
\right\|_{L^2}
\approx 
\left\| \xi \zeta (\xi) \theta_1 (2^{-j} \xi) \right\|_{L^2}
\lesssim 
2^{j (\frac{n}{2}+1)} \|\theta_1\|_{C^0}.  
\end{align*}
Hence 
\begin{equation}\label{L2f-2}
\left\|  
e^{i \phi_1 (D)} \zeta (D) \theta_1 (2^{-j} D) f   
\right\|_{L^2}
\lesssim  
2^{j (\frac{n}{2}+1)}\, r 
\left\| \theta_1 \right\|_{C^0}. 
\end{equation}
Combining \eqref{L2g1B}, \eqref{L2f-1}, and 
\eqref{L2f-2}, we obtain 
\begin{align*}
(\ast \ast \ast \ast) 
&\lesssim 
\sum_{j=1}^{\infty} 
2^{jm} A \, 
\min \left\{ 
r^{-\frac{n}{2}}, \, 
2^{j (\frac{n}{2}+1)}\, r 
\right\} 
\|\theta_1 \|_{C^0}
\|\theta_2\|_{C^0}
\|g\|_{L^{\infty}}
\\
&
=
A \|\theta_1\|_{C^0} 
\|\theta_2\|_{C^0}
\|g\|_{L^{\infty}}
\sum_{j=1}^{\infty} 
\min \left\{ 
\left( 2^j r\right)^{-\frac{n}{2}}, \, 
2^j r 
\right\} 
\\
&
\approx 
A \|\theta_1\|_{C^0} 
\|\theta_2\|_{C^0}
\|g\|_{L^{\infty}}, 
\end{align*}
where we used $m= -n/2$. 
This completes the proof of Theorem \ref{th-D4}. 
\end{proof}

\section{Necessary conditions on $m$}
\label{E}

In this section, we shall prove Theorem \ref{th-E1E3}.

In fact, we shall prove a stronger theorem 
by considering a multiplier of a special form.  
Take a function $\theta \in C_{0}^{\infty}(\R^n)$ such that 
$\supp \theta \subset \{3^{-1}\le |\xi|\le 3\}$ and 
$\theta (\xi)=1$ for $2^{-1}\le |\xi|\le 2$. 
We consider the multiplier 
\begin{equation*}
\sigma_j (\xi, \eta) 
=2^{jm} \theta (2^{-j}\xi) \theta (2^{-j}\eta), 
\quad 
j\in \N. 
\end{equation*}
This multiplier satisfies the inequalities 
\begin{equation*}
|\partial^{\alpha}_{\xi} 
\partial^{\beta}_{\eta} \sigma_j (\xi, \eta)| 
\le 
C_{\alpha, \beta} \left( 1+ |\xi| + |\eta|\right)^{m-|\alpha|-|\beta|}
\end{equation*}
with $C_{\alpha, \beta}$ independent of $j \in \N$. 
Thus if the 
the assertion \eqref{eq-nec1} holds 
then, by the closed graph theorem, it follows that 
there exists a constant 
$A=A(n,m,p,q, \theta)$ such that 
\begin{equation}\label{eq-nec2}
\left\| 
2^{jm} 
e^{i (|\xi|+ |\eta|)} 
\theta (2^{-j} \xi) 
\theta (2^{-j} \eta)
\right\| 
_{\calM (H^p \times H^q \to X_r) }
\le A 
\;\; 
\text{for all}
\;\; 
j \in \N.  
\end{equation}
We shall prove that the conditions given in 
Theorem \ref{th-E1E3} are already 
necessary for \eqref{eq-nec2}. 
We shall prove the following theorem, 
which asserts that 
the claims of Theorem 
\ref{th-E1E3} hold if we replace the condition 
\eqref{eq-nec1} by the condition \eqref{eq-nec2}.

\begin{thm}\label{th-E} 
Let $n \ge 2$. 

$(1)$ Let $0<p, q \le 2$ or $2\le p, q \le \infty$. 
Then 
$m \in \R$ satisfies \eqref{eq-nec2} 
only if 
$m \le - (n-1) \big( |1/p - 1/2| + |1/q - 1/2| \big)$. 

$(2)$ Let $1\le p \le 2\le q \le \infty$ 
or $1\le q \le 2\le p \le \infty$ 
and assume 
$1/p+1/q=1$. 
Then $m \in \R$ satisfies 
\eqref{eq-nec2} 
only if $m \le -n |1/p -1/2|$. 
\end{thm}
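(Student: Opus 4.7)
The strategy is, for each pair $(p,q)$ in the stated ranges, to exhibit a sequence of test functions $(f_j, g_j)$ with
\[
R_j \;:=\; \frac{\|T_{\sigma_j}(f_j, g_j)\|_{X_r}}{\|f_j\|_{H^p}\,\|g_j\|_{H^q}} \;\gtrsim\; 2^{j(m+\kappa_0)},
\]
where $\kappa_0$ is the critical exponent claimed by the theorem; then $m > -\kappa_0$ would force $R_j \to \infty$, contradicting \eqref{eq-nec2}. The essential structural observation is the tensor-product identity
\[
T_{\sigma_j}(f,g)(x) \;=\; 2^{jm}\,T_j f(x)\,T_j g(x),
\qquad T_j := e^{i|D|}\theta(2^{-j}D),
\]
which reduces everything to the geometry of the kernel $K_j = (e^{i|\xi|}\theta(2^{-j}\xi))^{\vee}$. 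By standard stationary phase (cf.\ Lemma~\ref{lem-220925}), $K_j$ is concentrated on the unit shell $\{\,||x|-1|\lesssim 2^{-j}\}$ with amplitude $\approx 2^{j(n+1)/2}$, so $\|K_j\|_{L^s}\approx 2^{j((n+1)/2 - 1/s)}$ for $1\le s\le\infty$.

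For Part $(1)$, in the range $0<p,q\le 2$, take $f_j = g_j = a_j$, an atom-like bump supported on $B(0,2^{-j})$, normalized to $\|a_j\|_{H^p}\approx 1$ (with the usual mean-vanishing and moment conditions). Since $\widehat{a_j}$ is essentially constant on $\{|\xi|\sim 2^j\}$, the wave packet $T_j a_j$ is comparable to $K_j$ modulo a mass scaling factor, and a direct calculation of $\|(T_j a_j)^2\|_{L^r} = \|T_j a_j\|_{L^{2r}}^2$ combined with $\|a_j\|_{H^p}\|a_j\|_{H^q}$ gives
\[
R_j \;\approx\; 2^{j\left(m + (n-1)(|1/p-1/2| + |1/q-1/2|)\right)},
\]
which forces the claimed bound. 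For the dual range $2\le p,q\le\infty$, take instead $f_j = g_j = b_j$, a truncated sign-of-kernel $\overline{\mathrm{sgn}\,K_j(x_0-\cdot)}$ for a fixed $|x_0|=1$, so that $T_j b_j$ focuses at $x_0$ with amplitude $\approx 2^{j(n-1)/2}$; the $X_r$-norm of $(T_j b_j)^2$ is bounded from below via duality against a dipole $H^1$-atom at $x_0$, yielding the matching lower bound.

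For Part $(2)$, by complex interpolation against the trivial $L^2\times L^2\to L^1$ bound (on which $T_j$ is an $L^2$-isometry so the bilinear norm is $O(1)$), it suffices to treat the endpoint $(p,q,r) = (1,\infty,1)$ and show that $m\le -n/2$ is necessary. Take $f_j$ to be an $H^1$-atom of width $2^{-j}$, so that $T_j f_j \approx K_j$ and $\|T_j f_j\|_{L^2}\approx 2^{jn/2}$. Construct $g_j\in L^\infty$ of unit $L^\infty$-norm as a carefully tuned superposition of plane-wave-modulated bumps $e^{-iy\cdot 2^j\omega_k}\chi(y)$ indexed by a well-spaced family of directions $\{\omega_k\}\subset S^{n-1}$, so that the wave packets $T_j g_j$ collectively cover the unit shell and correlate with $K_j$ in an $L^2$-sense. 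This is designed to give $\|T_j f_j\cdot T_j g_j\|_{L^1}\gtrsim 2^{jn/2}$, hence $R_j\gtrsim 2^{j(m+n/2)}$.

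The main obstacle is Part $(2)$: a single wave packet $g_j(y) = e^{-iy\cdot 2^j\omega}\chi(y)$ yields only the linear rate $R_j\sim 2^{j(m+(n-1)/2)}$, so achieving the sharper bilinear rate $R_j\sim 2^{j(m+n/2)}$ requires the extra factor $2^{j/2}$. This factor cannot come from a single wave packet; it must emerge from summing over roughly $2^{j(n-1)/2}$ angular directions with coefficients balancing the $L^\infty$ constraint on $g_j$ against the $L^2$-alignment of $T_j g_j$ with $K_j$ on the shell, in the spirit of a bilinear Knapp construction. Once $N_j\gtrsim 2^{jn/2}$ is established at the endpoint, the complex-interpolation step covering all $(p,q)$ on the line $1/p+1/q=1$ is a routine application of analytic families of multipliers.
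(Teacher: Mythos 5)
Your Part (1) is essentially the paper's own argument: the paper tests with $f_j=(\psi(2^{-j}|\xi|))^\vee$ in the range $p,q\le 2$ and with the time-reversed wave $\widetilde f_j=(e^{-i|\xi|}\psi(2^{-j}|\xi|))^\vee$ (so that $S_j\widetilde f_j$ is a focused bump) in the range $p,q\ge 2$, using exactly the shell concentration $\|K_j\|_{L^s}\approx 2^{j(\frac{n+1}{2}-\frac1s)}$ from Lemma \ref{lem-220919}. One caveat for your dual-range example: if the ``truncated sign-of-kernel'' $b_j$ is truncated to a unit ball, then $\|b_j\|_{H^p}\approx 1$ while the focusing amplitude is only $\|K_j\|_{L^1}\approx 2^{j\frac{n-1}{2}}$, and the resulting bound is $m\le -(n-1)+\frac{n}{r}$, which is strictly weaker than the claimed $-(n-1)+\frac{n-1}{r}$ whenever $r<\infty$. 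You must truncate to the $2^{-j}$-shell $\{\,\bigl||x_0-y|-1\bigr|\lesssim 2^{-j}\}$ (or, as the paper does, take $b_j$ proportional to $\overline{K_j(x_0-\cdot)}$ itself) so that $\|b_j\|_{H^p}\approx 2^{-j/p}$; then the exponents match.

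Part (2) has a genuine gap, on two counts. First, the reduction to the endpoint $(p,q)=(1,\infty)$ by ``complex interpolation against the $L^2\times L^2\to L^1$ bound'' is not available: you are proving a \emph{necessary} condition, so you must refute \eqref{eq-nec2} at each interior point $(p,q)$ on the line $1/p+1/q=1$, and boundedness at an interior point neither implies nor is implied by boundedness at the endpoint (interpolation between $(p,q)$ and $(2,2)$ only reaches points between them, never $(1,\infty)$; extrapolation outward is false in general). Second, even at the endpoint your construction is not carried out: you correctly identify that a single wave packet only yields the rate $2^{j(m+\frac{n-1}{2})}$ and that an extra $2^{j/2}$ must be extracted from a superposition respecting the $L^\infty$ constraint, but you stop at describing what the construction is ``designed to give.'' This missing $2^{j/2}$ is precisely the hard part. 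The paper's resolution avoids both problems at once: it tests against $f_j$ and $g_{j,\alpha}=\sum_\ell\alpha_\ell f_j(\cdot-\delta'2^{-j}\ell)$ (translates at scale $2^{-j}$, not angular wave packets), \emph{dualizes} the resulting inequality --- which is how the exponent $q'$ enters and why no interpolation is needed to cover all $q$ --- and then averages over random signs via Khintchine's inequality, reducing everything to the lower bound \eqref{B2jn-3}. The source of the extra gain is the geometric fact that $\Sigma(v)=\{|x|=|x-v|=1\}$ is an $(n-2)$-sphere whose $2^{-j}$-neighborhood has measure $\approx 2^{-2j}$, on which the product $S_jf_j(x)\,S_jf_j(x-v)$ has the coherent amplitude $2^{j(n+1)}$ guaranteed by Lemma \ref{lem-220919}\,(2); counting the $\approx 2^{j(n-2)}$ cubes $Q_\nu$ meeting this set and the $\approx 2^{jn}$ admissible translates $\ell$ produces the factor $2^{j(nq'/2+n)}$ that forces $m\le -n/2+n/q$. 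Without the dualization-plus-randomization step (or an explicit deterministic choice of phases that you verify achieves the same coherence), your Part (2) does not close.
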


To prove this theorem, we use the following lemma.

\begin{lem}\label{lem-220919}
Let $\psi$ be a $C^{\infty}$ function on $\R$ 
such that 
\[
\supp \psi \subset \{t\in \R \mid 2^{-1}\le t\le 2\}, 
\quad \psi (t) \ge 0, 
\quad 
\psi (t) \not\equiv 0, 
\] 
and set 
\[
h_j (x) = 
\left( e^{-i |\xi|} \psi (2^{-j} |\xi|)\right)^{\vee} (x), 
\]
which is the inverse Fourier transform of the radial function 
$e^{-i |\xi|} \psi (2^{-j} |\xi|)$ on $\R^n$. 
Then 
the following hold. 

$(1)$ 
For each $L>0$, 
there exists a constant 
$c_L$, depending only on $n, \psi$, and $L$,	 
such that 
\begin{equation*}
\left|
h_j (x) 
\right|
\le 
c_L \, 
2^{j \frac{n+1}{2}}
\left( 1+ 2^{j} \big| 1- |x|\big|\right)^{-L} 
\end{equation*}
for all $ j\in \N$ and all $x\in \R^n$.  

$(2)$ 
There exist 
$\delta, c_0 \in (0, \infty)$ and $j_0\in \N$, 
depending only on 
$n$ and $\psi$, 
such that 
\begin{equation*}
\big|e^{i(\frac{(n-2)\pi}{4}+\frac{\pi}{4})}\, 
2^{-j\frac{n+1}{2}} \, 
h_j (x)
- c_0\, \big|
\le \frac{c_0}{10}
\quad  
\text{if}
\;\; 
1- \delta 2^{-j } < |x|  <1+ \delta 2^{-j}  
\; \; \text{and}\;\; 
j > j_0. 
\end{equation*}


$(3)$ 
For each $0<p\le \infty$, 
\[
\left\|
h_j
\right\|_{H^p}
\approx 
\left\|
h_j
\right\|_{L^p}
\approx 
2^{j (\frac{n+1}{2}-\frac{1}{p})}, 
\quad j\in \N,  
\]
where the implicit constants in $\approx$ depend only on 
$n, \psi$, and $p$. 
\end{lem}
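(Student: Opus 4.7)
The plan is to exploit the radial symmetry of $e^{-i|\xi|}\psi(2^{-j}|\xi|)$ and reduce the computation of $h_j(x)$ to a one-dimensional oscillatory integral to which one-variable stationary/non-stationary phase applies cleanly.

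First I would write, using the radial Fourier inversion formula,
\[
h_j(x)=c_n\,|x|^{-\frac{n-2}{2}}\int_0^\infty e^{-ir}\psi(2^{-j}r)J_{\frac{n-2}{2}}(r|x|)\,r^{\frac{n}{2}}\,dr,
\]
and then change variables $r=2^js$ to obtain
\[
h_j(x)=c_n\,2^{j(\frac{n}{2}+1)}|x|^{-\frac{n-2}{2}}\int_0^\infty e^{-i2^js}\psi(s)J_{\frac{n-2}{2}}(2^js|x|)\,s^{\frac{n}{2}}\,ds.
\]
Next I insert the standard asymptotic expansion
\[
J_{\frac{n-2}{2}}(t)=\sqrt{\tfrac{2}{\pi t}}\cos\!\bigl(t-\tfrac{(n-2)\pi}{4}-\tfrac{\pi}{4}\bigr)+O(t^{-3/2}),\qquad t\to\infty,
\]
which is applicable in the regime where $2^js|x|$ is large. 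Writing the cosine as the sum of two exponentials, I split $h_j(x)$ into an ``outgoing'' piece (with phase $2^js(|x|-1)$) and an ``incoming'' piece (with phase $-2^js(|x|+1)$), plus a harmless error coming from the $O(t^{-3/2})$ tail. Setting $\alpha=\frac{(n-2)\pi}{4}+\frac{\pi}{4}$, the outgoing contribution has the form
\[
\frac{c_n'\,2^{j\frac{n+1}{2}}}{|x|^{\frac{n-1}{2}}}\,e^{-i\alpha}\!\int_0^\infty \psi(s)\,s^{\frac{n-1}{2}}\,e^{i2^js(|x|-1)}\,ds,
\]
and the incoming contribution has phase that is uniformly non-stationary in $s$ on $\supp\psi$, hence is $O_L(2^{-jL})$ by repeated integration by parts.

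For part (1), the outgoing integral is the Fourier transform of the compactly supported smooth function $\psi(s)s^{(n-1)/2}$ evaluated at $-2^j(|x|-1)/(2\pi)$; integration by parts in $s$ any number of times yields the decay $(1+2^j||x|-1|)^{-L}$. Combined with the explicit $2^{j(n+1)/2}$ prefactor and the fact that the prefactor $|x|^{-(n-1)/2}$ is controlled in the relevant region, this gives the pointwise bound stated in~(1). For $|x|$ small (where the Bessel asymptotic is not directly available) or, equivalently, $||x|-1|\gtrsim 1$, one argues directly in $\R^n$: the phase $x\cdot\xi-|\xi|$ has gradient bounded away from zero, so iterated integration by parts in $\xi$ on the original integral gives $O_L(2^{-jL})$, which is stronger than claimed. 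For part (2), when $2^j||x|-1|<\delta$ and $j$ is large, the exponential factor $e^{i2^js(|x|-1)}$ equals $1+O(\delta)$ uniformly on $\supp\psi$, so the outgoing integral is within $O(\delta)$ of the strictly positive constant $c_0=c_n'\int_0^\infty\psi(s)s^{(n-1)/2}\,ds$; choosing $\delta$ small enough and $j_0$ large enough absorbs the incoming-piece and Bessel-remainder errors, yielding the claim. The main obstacle here is bookkeeping: one must verify that the $O(\delta)$ term from the oscillatory integral, the $O(t^{-3/2})$ Bessel remainder, and the incoming piece can all simultaneously be made smaller than $c_0/10$.

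Finally, for part (3), the upper bound $\|h_j\|_{L^p}\lesssim 2^{j((n+1)/2-1/p)}$ follows by integrating the bound from (1) in polar coordinates; the annulus where the weight $(1+2^j||x|-1|)^{-L}$ is not already tiny has volume $\approx 2^{-j}$, so one extracts the factor $2^{-j/p}$. The matching lower bound comes from (2): on the annulus $\{1-\delta 2^{-j}<|x|<1+\delta 2^{-j}\}$, of volume $\approx 2^{-j}$, one has $|h_j(x)|\gtrsim 2^{j(n+1)/2}$, giving $\|h_j\|_{L^p}\gtrsim 2^{j((n+1)/2-1/p)}$. The comparison $\|h_j\|_{H^p}\approx\|h_j\|_{L^p}$ for $p\le 1$ is then immediate from the Fourier-analytic characterization of Hardy spaces: the Fourier support of $h_j$ lies in the annulus $|\xi|\in[2^{j-1},2^{j+1}]$, so a single Littlewood–Paley piece already captures it and one obtains $\|h_j\|_{H^p}\approx \|(\varphi_j * h_j)\|_{L^p}=\|h_j\|_{L^p}$ (for $1<p\le\infty$ this is just the definition $H^p=L^p$ from the paper's convention).
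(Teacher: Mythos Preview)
Your proposal is correct and follows essentially the same route as the paper's proof: express $h_j$ via the Bessel-function formula, insert the asymptotic expansion of $J_{(n-2)/2}$ to isolate an outgoing main term (the paper's $I_j^+$, whose Fourier-transform structure gives the $(1+2^j|1-|x||)^{-L}$ decay and the constant $c_0$), and dispose of the incoming term and Bessel remainder by integration by parts. The only minor variation is that for the region where the Bessel asymptotic is unavailable the paper uses the power-series expansion of $J_{(n-2)/2}$ (treating $2^j|x|\le 1$), whereas you propose direct nonstationary phase on the original $\R^n$ integral for $||x|-1|\gtrsim 1$; both give the required rapid decay there.
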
 

\begin{proof} 
The assertion (3) follows from (1) and (2). 
In fact, the inequality $\|h_j\|_{H^p}\approx \|h_j\|_{L^p}$ holds 
because the support of the inverse Fourier transform of $h_j$ is 
included in the annulus $\{2^{j-1}\le |\xi|\le 2^{j+1}\}$. 
The estimate of $\|h_j\|_{L^p}\lesssim 2^{j (\frac{n+1}{2}-\frac{1}{p})}$ 
follows from (1) and 
the converse estimate 
$\|h_j\|_{L^p}\gtrsim 2^{j (\frac{n+1}{2}-\frac{1}{p})}$ 
follows from (2). 
Thus we only need to prove (1) and (2).

Since $h_j (x)$ is the inverse Fourier transform 
of a radial function, 
it is written in terms of 
Bessel function as 
\[
h_j (x) = 
(2\pi)^{-\frac{n}{2}}
\int_{0}^{\infty} 
J_{\frac{n-2}{2}}(|x|t)\, 
(|x|t)^{-\frac{n-2}{2}}\, 
\psi \big( 2^{-j}t \big)\, 
e^{-i t}\, t^{n-1}\, dt, 
\]
where 
$J_{\frac{n-2}{2}}$ is the Bessel function 
(see {\it e.g.}\/ \cite[Theorem 3.3, p.\ 155]{SW}; 
this formula holds for $n=1$ as well since 
$
(2\pi)^{-1/2}
J_{-1/2}(s)\, 
s^{1/2}
={\pi}^{-1} \cos s
$).

{\it Proof of $(1)$.}\/ 
Firstly, we estimate of $h_j (x)$ for $2^{j} |x|\le 1$. 
For this, we use the power series expansion 
\begin{equation*}
(2\pi)^{-\frac{n}{2}
}\, 
J_{\frac{n-2}{2}}(s)\, 
s^{-\frac{n-2}{2}}
=
\sum_{m=0}^{\infty}
a_{m}\, s^{m},  
\end{equation*}
whose radius of convergence is $\infty$. 
Integrating term by term, we have 
\begin{align*}
h_j(x)
&=
\int_{0}^{\infty} 
\sum_{m=0}^{\infty}
a_{m}\, (|x|t)^{m}\,
\psi \big( 2^{-j} t \big)\, 
e^{-i t}\, t^{n-1}\, dt
\\
&
=
\sum_{m=0}^{\infty}
a_{m}\, |x|^m
\big(  
\psi \big( 2^{-j} t \big)
\, t^{m+n-1}\big) ^{\wedge}(1)
\\
&
=\sum_{m=0}^{\infty}
a_{m}\, |x|^m
(2^{j})^{m+n} 
\big(  
\psi ( t )
\, t^{m+n-1}\big)^{\wedge}(2^{j}). 
\end{align*}
The function arising in the last expression 
satisfies 
\begin{align*}
&
\supp 
\big( 
\psi ( t )
\, t^{m+n-1}\big)
\subset \{t\in \R \mid 2^{-1}\le t \le 2\}, 
\\
&
\bigg| 
\bigg(\frac{d}{dt}\bigg)^{\ell} 
\big( 
\psi ( t )
\, t^{m+n-1}\big) 
\bigg|
\le c_{\ell }\, 
(1+m)^{\ell}\, 2^{m}. 
\end{align*}
Hence, for any 
$L^{\prime}\in \N$, 
we have 
\[
\bigg|
\big( 
\psi ( t )
\, t^{m+n-1}
\big)^{\wedge}(2^{j})
\bigg| 
\le 
c_{L^{\prime}}
(1+m)^{L^{\prime}}\, 
2^{m}\, 
 \big( 2^{j}\big)^{-L^{\prime}}. 
\]
Thus, 
for $2^{j}|x|\le 1$, 
we have 
\begin{align*}
|h_j(x)|
&
\le 
\sum_{m=0}^{\infty}
|a_{m}||x|^m\,
\big( 2^{j}\big)^{m+n}  
c_{L^{\prime}}  (1+m)^{L^{\prime}}\,2^{m}\, 
 \big( 2^{j}\big)^{-L^{\prime}}
\\
&
\le 
c_{L^{\prime}} 
\big( 2^{j}\big)^{n-L^{\prime}} 
\sum_{m=0}^{\infty}
|a_{m}|\,
(1+m)^{L^{\prime}}\,
2^{m}\, 
= 
\widetilde{c}_{L^{\prime}}  
\big( 2^{j}\big)^{n-L^{\prime}}.  
\end{align*}
Since $L^{\prime}$ can be taken arbitrarily large, 
the above implies the desired estimate of 
$h_j (x)$ for $2^{j}|x|\le 1$.

Next, we estimate $h_j(x)$ for $2^{j} |x|>1$. 
For this,  
we use the asymptotic expansion of the Bessel function, 
which reads as 
\begin{equation*}
(2\pi)^{-\frac{n}{2}}\, 
J_{\frac{n-2}{2}}(s)\, 
s^{-\frac{n-2}{2}}
=
b^{+} e^{i s} 
s^{\frac{1}{2}-\frac{n}{2}}\, 
+ 
b^{-} e^{-i s} 
s^{\frac{1}{2}-\frac{n}{2}}\, 
+
e^{i s}R^{+}(s)
+ 
e^{-i s} R^{-}(s),  
\end{equation*}
where 
$
b^{\pm}= 
(2\pi)^{-\frac{n+1}{2}}
e^{\mp i (\frac{(n-2)\pi}{4}+\frac{\pi}{4})}
$ 
and the remainder terms $R^{\pm}(s)$ satisfy 
\begin{equation}\label{remainder}
\bigg( \frac{d}{ds}\bigg)^{\ell} R^{\pm} (s)
=O\big(s^{\frac{1}{2}-\frac{n}{2}-1-\ell }\big) 
\; \; \text{as}\;\; 
s \to \infty , 
\quad 
\ell = 0, 1, 2, \dots .
\end{equation}
Corresponding to the above formula, 
we write 
\begin{align*}
h_j(x)
& =
b^{+} 
\int_{0}^{\infty} 
e^{i |x|t} 
(|x|t)^{\frac{1}{2}-\frac{n}{2}}\, 
\psi \big( 2^{-j} t \big)\, 
e^{-i t}\, t^{n-1}\, dt
\\
& \quad 
+ 
b^{-} 
\int_{0}^{\infty} 
e^{-i |x|t}  
(|x|t)^{\frac{1}{2}-\frac{n}{2}}\, 
\psi \big( 2^{-j} t \big)\, 
e^{-i t}\, t^{n-1}\, dt
\\
& \quad 
+
\int_{0}^{\infty} 
e^{i |x|t}
R^{+}(|x|t)\, 
\psi \big( 2^{-j} t \big)\, 
e^{-i t}\, t^{n-1}\, dt
\\
& \quad 
+
\int_{0}^{\infty} 
e^{-i |x|t}
R^{-}(|x|t)\, 
\psi \big( 2^{-j} t \big)\, 
e^{-i t}\, t^{n-1}\, dt
\\
& 
=
I^{+}_{j} (x)
+ I^{-}_{j} (x)   
+ K^{+}_{j} (x) 
+K^{-}_{j} (x). 
\end{align*}
We shall estimate 
each of 
$I^{+}_{j} (x)$, 
$I^{-}_{j} (x)$, $K^{+}_{j} (x)$, 
and $K^{-}_{j} (x)$ 
for $2^{j} |x| > 1$.

Estimate of $I^{+}_{j} (x)$ for $2^{j} |x|>1$. 
$I^{+}_{j} (x)$ is written as 
\begin{equation}\label{formula-I+j}
\begin{split}
I^{+}_{j} (x)
&=
 b^{+}\, 
\big\{ 
( |x| t )^{\frac{1}{2}-\frac{n}{2}}\, 
\psi (2^{-j}t )\, t^{n-1} 
\big\} ^{\wedge} (1-|x|)
\\
&
=
 b^{+}\,  
|x|^{\frac{1}{2}-\frac{n}{2}} \, 
(2^{j})^{\frac{1}{2}+\frac{n}{2}}\, 
\big( \psi (t)\, t^{-\frac{1}{2}+\frac{n}{2}} \big)^{\wedge}
\big( 2^{j} (1-|x|) \big). 
\end{split}
\end{equation}
Since $\big( \psi (t)\, t^{-\frac{1}{2}+\frac{n}{2}} \big)^{\wedge}$ 
is a rapidly decreasing function, 
we have 
\[
|I^{+}_{j} (x)|
\lesssim 
|x|^{\frac{1}{2}-\frac{n}{2}}\, 
(2^{j})^{\frac{1}{2}+\frac{n}{2}}\, 
\big( 1+ 2^{j} \big|1-|x|\big| \big)^{-L^{\prime}} 
\]
for any $L^{\prime}> 0$. 
Hence, 
\begin{align*}
&
2^{-j}< |x|\le 2^{-1} \; \Rightarrow \;
|I^{+}_{j} (x)|
\lesssim 
(2^{-j})^{^{\frac{1}{2}-\frac{n}{2}}}\, 
(2^{j})^{\frac{1}{2}+\frac{n}{2}}\, 
(2^{j})^{-L^{\prime}}
=
(2^j)^{n-L^{\prime}}, 
\\
&
|x|> 2^{-1}\; \Rightarrow \; 
|I^{+}_{j} (x)|
\lesssim 
(2^{j})^{\frac{1}{2}+\frac{n}{2}}\, 
\big( 1+ 2^{j} \big|1-|x|\big| \big)^{-L^{\prime}}.  
\end{align*}
For any given $L>0$, 
the above estimates with a 
sufficiently large $L^{\prime}$ imply  
\begin{equation}\label{I+j}
\big| I^{+}_{j} (x) \big| 
\lesssim 
(2^{j})^{\frac{1}{2}+\frac{n}{2}} 
\big( 1+ 2^j \big| 1-|x| \big| \big)^{-L}, 
\quad 
2^{j}|x| >1.  
\end{equation}

Estimate of $I^{-}_{j}(x)$ for $2^{j}|x| >1$.  
The function $I^{-}_{j} (x)$ is written as 
\begin{equation*}
I^{-}_{j} (x)
=
 b^{-}\, 
\big\{ 
( |x| t )^{\frac{1}{2}-\frac{n}{2}}\, 
\psi (2^{-j}t )\, t^{n-1} 
\big\} ^{\wedge} (1+|x|). 
\end{equation*}
Hence, by the same reason as in the case of $I^{+}_{j}(x)$, 
\[
|I^{-}_{j} (x)|
\lesssim 
|x|^{\frac{1}{2}-\frac{n}{2}}\, 
(2^{j})^{\frac{1}{2}+\frac{n}{2}}\, 
\big( 2^{j} \big|1+|x|\big| \big)^{-L^{\prime}} 
\]
for any $L^{\prime}> 0$. 
Restricting to the region $2^{j}|x|>1$, we have 
\begin{equation}\label{I-j}
|I^{-}_{j} (x)|
\lesssim 
(2^{j})^{n-L^{\prime}}\, 
(1+|x|)^{-L^{\prime}},  
\quad 2^{j}|x|>1.  
\end{equation}

Estimate of $K^{+}_{j}(x)$ for $2^{j} |x|>1$. 
$K^{+}_{j} (x)$ is written as 
\begin{equation*}
K^{+}_{j} (x)
=
 \big\{ 
R^{+}(|x|t)\, 
\psi ( 2^{-j} t) \, t^{n-1} \big\}^{\wedge}
(1-|x|). 
\end{equation*}
The function $R^{+}(|x|t)\, 
\psi ( 2^{-j} t) \, t^{n-1} $ 
is supported on $\{2^{j-1}\le t \le 2^{j+1}\}$. 
If $2^j |x|>1$, then \eqref{remainder}
implies 
\[
\left| 
\partial_{t}^{\ell} 
\big\{ 
R^{+}(|x|t)\, 
\psi ( 2^{-j} t) \, t^{n-1}
\big\}
\right| 
\lesssim 
|x|^{\frac{1}{2}- \frac{n}{2} -1}
\big( 2^j \big)^{\frac{1}{2}+ \frac{n}{2}-2 -\ell}, 
\quad \ell = 0,1,2, \dots, 
\]
which, via Fourier transform, yields 
\[
\big| \big\{ 
R^{+}(|x|t)\, 
\psi ( 2^{-j} t) \, t^{n-1} \big\}^{\wedge}
(1-|x|)\big|
\lesssim 
|x|^{\frac{1}{2}-\frac{n}{2}-1} 
(2^{j})^{\frac{1}{2}+\frac{n}{2}-1} 
\big( 1+ 2^j \big| 1-|x| \big| \big)^{-L^{\prime}}. 
\]
for any $L^{\prime}> 0$. 
Hence 
\begin{align*}
&
2^{-j}< |x|\le 2^{-1} \; \Rightarrow \;
|K^{+}_{j} (x)|
\lesssim 
(2^{-j})^{^{\frac{1}{2}-\frac{n}{2}-1}}\, 
(2^{j})^{\frac{1}{2}+\frac{n}{2}-1}\, 
(2^{j})^{-L^{\prime}}
=
(2^j)^{n-L^{\prime}}, 
\\
&
|x|> 2^{-1}\; \Rightarrow \; 
|K^{+}_{j} (x)|
\lesssim 
(2^{j})^{\frac{1}{2}+\frac{n}{2}-1}\, 
\big( 1+ 2^{j} \big|1-|x|\big| \big)^{-L^{\prime}}.  
\end{align*}
For any $L>0$, the above estimates with $L^{\prime}$ sufficiently large 
implies 
\begin{equation}\label{K+j}
\big| K^{+}_{j} (x) \big| 
\lesssim 
(2^{j})^{\frac{1}{2}+\frac{n}{2}-1} 
\big( 1+ 2^j \big| 1-|x| \big| \big)^{-L}, 
\quad 
2^{j}|x| >1.  
\end{equation}

Estimate of $K^{-}_{j}(x)$ for $2^{j} |x|>1$. 
$K^{-}_{j} (x)$ is written as 
\begin{equation*}
K^{-}_{j} (x)
=
 \big\{ 
R^{-}(|x|t)\, 
\psi ( 2^{-j} t) \, t^{n-1} \big\}^{\wedge}
(1+|x|). 
\end{equation*}
If $2^j |x|>1$, then by the same reasoning as 
above we obtain 
\[
\big| \big\{ 
R^{-}(|x|t)\, 
\psi ( 2^{-j} t) \, t^{n-1} \big\}^{\wedge}
(1+|x|)\big|
\lesssim 
|x|^{\frac{1}{2}-\frac{n}{2}-1} 
(2^{j})^{\frac{1}{2}+\frac{n}{2}-1} 
\big( 2^j  (1+|x| ) \big)^{-L^{\prime}}
\]
for any $L^{\prime}> 0$. 
Hence 
\begin{equation}\label{K-j}
|K^{-}_{j} (x)|
\lesssim 
(2^{j})^{n-L^{\prime}}\, 
\big( 1+|x| \big)^{-L^{\prime}},  
\quad 2^{j}|x|>1.  
\end{equation}

Now from 
\eqref{I+j}, 
\eqref{I-j}, 
\eqref{K+j}, and 
\eqref{K-j}, 
we obtain 
the estimate of $h_j (x)$ for $2^j |x|>1$ as claimed in the lemma.  
Thus the claim (1) is proved.

{\it Proof of $(2)$.}\/
The equality \eqref{formula-I+j} and the equality 
$b^{+}= 
(2\pi)^{-\frac{n+1}{2}}
e^{-i  (\frac{(n-2)\pi}{4}+\frac{\pi}{4})}$ 
give 
\begin{equation}\label{I+0-psi}
e^{i  (\frac{(n-2)\pi}{4}+\frac{\pi}{4})}
(2^{j})^{-\frac{n}{2}-\frac{1}{2}} 
I^{+}_{j} (x) = 
 (2\pi)^{-\frac{n+1}{2}}  |x|^{\frac{1}{2}-\frac{n}{2}} 
\big( \psi (t)\, t^{-\frac{1}{2}+ \frac{n}{2}}
\big)^{\wedge} (2^{j} (1-|x|)) . 
\end{equation}
We 
set 
\[c_0 = 
(2\pi)^{-\frac{n+1}{2}} 
\big( \psi (t)\, t^{-\frac{1}{2}+ \frac{n}{2}}
\big)^{\wedge} (0). 
\]
This is a positive number since $\psi $ is nonnegative and not identically 
equal to $0$. 
Then, from \eqref{I+0-psi} and from continuity of the functions, 
it follows that there exists a number $\delta>0$ such that 
\[
1- 2^{-j} \delta < |x|< 1+ 2^{-j} \delta
\;\; \Rightarrow \;\; 
\left| 
e^{i  (\frac{(n-2)\pi}{4}+\frac{\pi}{4})}
(2^{j})^{-\frac{n}{2}-\frac{1}{2}} 
I^{+}_{j}(x) - c_0 \right|
\le \frac{c_0}{20}. 
\]
On the other hand, 
the estimates of \eqref{I-j}, \eqref{K+j}, and 
\eqref{K-j} 
imply that there exists a constant $c_1=c_1 (n, \psi)$ such that 
\[
1- 2^{-j}  < |x|< 1+ 2^{-j} 
\;\; \Rightarrow \;\; 
|I^{-}_{j}(x)|
+|K^{+}_{j}(x)|
+|K^{-}_{j}(x)|
\le 
c_{1}\,
2^{j(\frac{n}{2}+\frac{1}{2}-1)}. 
\]
Hence the estimate claimed in (2) of the lemma 
holds 
if we take $j_0$ large enough so that 
$c_{1}\, 2^{-j_0}\le c_0/20$. 
This completes the proof of 
Lemma \ref{lem-220919}. 
\end{proof}

\begin{proof}[Proof of Theorem \ref{th-E}.]
We define the operator $S_j$ by 
\[
S_j h = 
\left( 
e^{i |\xi|} 
\theta (2^{-j} \xi) 
\widehat{h}(\xi) 
\right)^{\vee}. 
\]
We divide the proof into three cases.

{\it Case 1:  $0<p,q\le2$.}\/ 
Assume \eqref{eq-nec2} holds, or equivalently 
\begin{equation}\label{eq-nec3}
2^{jm} \left\| 
S_j f \cdot 
S_j g  
\right\|_{X_r} 
\le A  
\|f\|_{H^p} 
\|g\|_{H^q}
\;\; 
\text{for all}
\;\; 
j \in \N. 
\end{equation}

Take $\psi$ as in Lemma \ref{lem-220919} and 
set 
\begin{equation*}
f_j (x)= \left( \psi (2^{-j} |\xi|) \right)^{\vee}(x), \quad j \in \N. 
\end{equation*}
We shall test \eqref{eq-nec3} to $f=g=f_j$.

Since the support of the Fourier transform of $f_j$ is included in 
the annulus $\{2^{j-1}\le |\xi|\le 2^{j+1}\}$ and since 
$f_j (x) = 2^{jn} \left( \psi ( |\cdot|) \right)^{\vee}(2^j x)$, it 
follows that 
\begin{equation*}
\|f_j \|_{H^p}
\approx 
\|f_j \|_{L^p}
\approx 
2^{j (n-\frac{n}{p})}
\end{equation*}
and similar estimate holds for $\|f_j \|_{H^q}$. 
On the other hand, 
by the choice of the functions $\theta$ and $\psi$, 
we have 
\begin{equation*}
S_j f_j = \left( e^{i |\xi|} 
\psi (2^{-j} |\xi|) \right)^{\vee}. 
\end{equation*}
Hence, by Lemma \ref{lem-220919}, 
there exist $\delta \in (0, \infty)$ and $j_0\in \N$ 
such that 
\begin{equation*}
\left|S_j f_j (x)\right|
\gtrsim 
2^{j \frac{n+1}{2}} 
\ichi \{2^{j} \left| 1-|x| \right|<\delta\}, 
\quad 
j>j_0. 
\end{equation*}
Thus 
\begin{equation*}
\left\| (S_j f_j)^{2} \right\|_{L^r}
\gtrsim 
\left( 2^{j \frac{n+1}{2}} \right)^{2}
\left(
\int 
\ichi \{2^{j} \left| 1-|x| \right|<\delta\}
\, dx
\right)^{1/r}
\approx 2^{j (n+1-\frac{1}{r})} 
\end{equation*} 
for $j>j_0$. 
Hence, if \eqref{eq-nec3} holds, then 
testing it to $f=g=f_j$ we have 
\[
2^{jm}\cdot 2^{j (n+1-\frac{1}{r})}
\lesssim 
2^{j (n-\frac{n}{p})} 
\cdot 
2^{j (n-\frac{n}{q})} 
\] 
for $j>j_0$, 
which is possible only when 
$m\le -(n-1)\big( \frac{1}{p} - \frac{1}{2} + \frac{1}{q} - \frac{1}{2}\big)$.

{\it Case 2:  $2\le p, q \le \infty$.}\/ 
Assume \eqref{eq-nec3} holds. 
Using the function $\psi$ of Lemma \ref{lem-220919}, 
we set 
\begin{equation*}
\widetilde{f}_j =  \left( e^{-i |\xi|} 
\psi (2^{-j} |\xi|) \right)^{\vee}, 
\quad j\in \N. 
\end{equation*}
Then 
Lemma \ref{lem-220919} gives the estimate 
\begin{equation*}
\left\|\widetilde{f}_j \right\|_{H^p}
\approx 
\left\|\widetilde{f}_j \right\|_{L^p}
\approx 
2^{j (\frac{n+1}{2}-\frac{1}{p})}
\end{equation*}
and similar estimate holds for $\|\widetilde{f}_j \|_{H^q}$. 
On the other hand, 
\begin{equation*}
S_j \widetilde{f}_j (x) 
= 
\left( \psi (2^{-j} |\xi|)
\right)^{\vee} (x) 
= 
2^{jn} 
\left( \psi ( |\cdot |)
\right)^{\vee} (2^{j}x) 
\end{equation*}
and hence 
\begin{align*}
\left\| 
(S_j \widetilde{f}_j)^{2} 
\right\|_{X_r}
=
\left\| 
2^{2jn} 
\left( \psi ( |\cdot |)
\right)^{\vee} (2^{j}x)^{2} 
\right\|_{X_r}
\approx 
2^{j (2n - \frac{n}{r})}. 
\end{align*}
Hence, if \eqref{eq-nec3} holds, then 
by testing it to $f=g=\widetilde{f}_j$ we have 
\[
2^{jm}\cdot 2^{j (2n-\frac{n}{r})}
\lesssim 
2^{j (\frac{n+1}{2}-\frac{1}{p})} 
\cdot 
2^{j (\frac{n+1}{2}-\frac{1}{q})},  
\] 
which is possible only when 
$m\le -(n-1)( \frac{1}{2} - \frac{1}{p} + \frac{1}{2} - \frac{1}{q})$.

{\it Case 3:  $1\le p \le 2\le q \le \infty$ or 
$1\le q \le 2\le p \le \infty$ and $1/p+1/q=1$.}\/ 

By the symmetry of the situation, it is sufficient to 
consider the case $1\le p \le 2\le q \le \infty$. 
Thus we assume 
$1\le p \le 2\le q \le \infty$ and $1/p+1/q=1/r=1$. 
We assume 
\eqref{eq-nec2} holds, or equivalently, 
\begin{equation}\label{eq-nec4}
2^{jm} \left\| 
S_j f \cdot S_j g  
\right\|_{L^1} 
\le A  
\|f\|_{H^p} 
\|g\|_{L^q}
\;\; 
\text{for all}
\;\; 
j \in \N, 
\end{equation}
and prove that this is possible only when $m\le -n/p + n/2$.

We use the same function $f_j$ that was 
used 
in the proof of Case 1: 
\[
f_j (x)= \left(\psi (2^{-j} |\xi|)\right)^{\vee} (x), 
\quad j \in \N, 
\]
where $\psi$ is the function given in Lemma \ref{lem-220919}.

As we have seen in Case 1, 
\begin{equation}\label{eq-fjHp}
\|f_j \|_{H^p}
\approx 
2^{j (n-\frac{n}{p})}. 
\end{equation}
On the other hand, 
\[
S_j f_j (x) = \left( e^{i |\xi|} \psi (2^{-j} |\xi|)\right)^{\vee}(x) 
=
\overline{\left( e^{-i |\xi|} \psi (2^{-j} |\xi|)\right)^{\vee}(-x) }
\]
and, hence, 
Lemma \ref{lem-220919} (2) gives 
\begin{equation}\label{Sjfjx}
\big|e^{-i(\frac{(n-2)\pi}{4}+\frac{\pi}{4})}\, 
2^{-j\frac{n+1}{2}} \, 
S_j f_j (x)
- c_0\, \big|
\le \frac{c_0}{10}
\;\; 
\text{if}
\;\; 
1- \delta 2^{-j } < |x|  <1+ \delta 2^{-j}  
\; \; \text{and}\;\; 
j > j_0. 
\end{equation}

For a sequence of complex numbers 
$\alpha = \big( \alpha_{\ell }\big)_{\ell \in \Z^n}$, 
we define 
$g_{j, \alpha}$ by 
\[
g_{j, \alpha} (x) 
= 
\sum_{\ell \in \Z^n}\, 
\alpha_{\ell}\, 
f_j (x - \delta^{\prime} 2^{-j} \ell), 
\]
where $\delta^{\prime}$ is a 
sufficiently small positive number; 
for the succeeding argument 
the choice 
$\delta^{\prime} = {\delta}/{(2 \sqrt{n})}$
will suffice.

We shall prove 
\begin{equation}\label{eq-gjLq}
\|g_{j, \alpha}\|_{L^q} \lesssim 2^{j(n-\frac{n}{q})} \|\alpha\|_{\ell^q}. 
\end{equation}
In fact, 
since 
$ 
f_j (x) 
=
2^{jn} 
\big( \psi (|\cdot |)\big)^{\vee}(2^{j}x) 
$ 
and since 
$\big( \psi (|\cdot|)\big)^{\vee}$ is a 
Schwartz function, 
we have 
$
\big| f_j (x) \big|
\lesssim 
2^{jn} \big( 1 + 2^{j}|x| \big)^{-L}
$ 
for any $L>0$. 
Thus, if $2\le q <\infty$, then 
H\"older's inequality yields 
\begin{align*}
& |g_{j, \alpha} (x)|
\lesssim 
\sum_{\ell \in \Z^n} 
| \alpha_{\ell} | 
2^{jn} \big( 1 + 2^{j}|x - \delta^{\prime} 2^{-j} \ell| \big)^{-L}
\\
& 
\le 
\left( 
\sum_{\ell \in \Z^n} 
| \alpha_{\ell} |^q  2^{jnq} 
\big( 1 + 2^{j}|x - \delta^{\prime} 2^{-j} \ell| \big)^{-L}
\right)^{1/q}
\left( 
\sum_{\ell \in \Z^n} 
\big( 1 + 2^{j}|x - \delta^{\prime} 2^{-j} \ell| \big)^{-L}
\right)^{1-{1}/{q}}
\\
&
\approx 
\left( 
\sum_{\ell \in \Z^n} 
| \alpha_{\ell} |^q  2^{jnq} 
\big( 1 + 2^{j}|x - \delta^{\prime} 2^{-j} \ell| \big)^{-L}
\right)^{1/q}
\end{align*}
and hence 
\begin{align*}
\left\| g_{j, \alpha} \right\|_{L^q}
\lesssim 
\left( 
\int 
\sum_{\ell \in \Z^n} 
| \alpha_{\ell} |^q  2^{jnq} 
\big( 1 + 2^{j}|x - \delta^{\prime} 2^{-j} \ell| \big)^{-L}
\, dx 
\right)^{1/q}
\approx 
\|\alpha \|_{\ell^q} 2^{j (n-\frac{n}{q})}. 
\end{align*}
An obvious modification gives \eqref{eq-gjLq} for $q=\infty$ as well.

Since the operator $S_j$ is linear and commutes with translation, 
we have
\[
S_j g_{j, \alpha} =
\sum_{\ell \in \Z^n} 
\alpha_{\ell} 
(S_j f_j)
(x - \delta^{\prime} 2^{-j} \ell).  
\]

Now we test \eqref{eq-nec4} to 
$f=f_j$ and $g=g_{j, \alpha}$. 
Then by \eqref{eq-fjHp} and \eqref{eq-gjLq} we have 
\begin{equation}\label{B2jn-1}
2^{jm} 
\left\| 
S_j f_j (x)   
\sum_{\ell \in \Z^n} \alpha_{\ell} 
S_j f_j  (x- \delta^{\prime} 2^{-j} \ell ) 
\right\|_{L^1_{x}}
\lesssim 
2^{jn} \|\alpha\|_{\ell^q}
\end{equation}
(recall that $1/p+1/q=1$). 
We take the dual form of this inequality, 
which reads as 
\begin{equation}\label{B2jn-2}
2^{jm} 
\bigg\| 
\int 
S_j f_j (x)
S_j f_j  (x- \delta^{\prime} 2^{-j} \ell ) 
\varphi (x)\, dx \, 
\bigg\|_{\ell^{q^{\prime}}_{\ell}}
\lesssim 
2^{jn}\|\varphi\|_{L^{\infty}}. 
\end{equation}

We define the cube $Q_{\nu}$ in $\R^n$ by 
\[
Q_{\nu}
=\delta^{\prime} 2^{-j} \big( \nu + (0,1]^n\big), 
\quad \nu \in \Z^n.  
\]
Then each $Q_{\nu}$ is a cube with side length $\delta^{\prime} 2^{-j} $ 
and all of them constitute a partition of  $\R^n$. 
Let 
$\big(\epsilon_{\nu}\big)_{\nu \in \Z^n}$ be any sequence of 
$\pm 1$, 
and apply 
\eqref{B2jn-2} to 
$
\varphi (x) = \sum_{\nu \in\Z^n} \epsilon_{\nu} \ichi_{Q_{\nu}}(x)
$. 
Then 
we obtain 
\begin{equation*}
2^{jm} 
\left( 
\sum_{\ell \in \Z^n} 
\bigg| 
 \sum_{\nu \in\Z^n} \epsilon_{\nu} 
\int_{Q_{\nu}} 
S_j f_j (x)
S_j f_j  (x- \delta^{\prime} 2^{-j} \ell ) 
\, dx\,  
\bigg|^{q^{\prime}}
\right)^{1/q^{\prime}}
\lesssim 2^{jn} .
\end{equation*}
Notice that this inequality holds uniformly 
for all choices of 
$\epsilon_{\nu}=\pm 1$. 
We take the $q^{\prime}$-th power of 
the above inequality, 
take average over all choices of 
$\epsilon_{\nu}=\pm 1$, 
and use 
Kintchine's inequality; 
this yields 
\begin{equation}\label{B2jn-3}
\sum_{\ell \in \Z^n} 
\left(  
 \sum_{\nu \in\Z^n} 
 \bigg| 
 \int_{Q_{\nu}} 
S_j f_j (x)
S_j f_j  (x- \delta^{\prime} 2^{-j} \ell ) 
\, dx\,  
\bigg|^2 \, 
\right)^{q^{\prime}/2}
\lesssim 
2^{j(n-m)q^{\prime}}
\end{equation}

We shall estimate the left hand side of \eqref{B2jn-3} from below. 
For 
$v\in \R^n$, we define 
\[
\Sigma (v)=
\{x \in \R^n \mid 
|x|=|x - v |=1 
\}. 
\]
If 
$0<|v|<2$, then 
$\Sigma (v)$ is a $n-2$ dimensional sphere of radius 
$\sqrt{1- 4^{-1}|v|^2}$. 
Thus, in particular, 
if $0<|v|<1 $ and 
$\eta > 0$ is sufficiently small, 
then 
the $n$-dimensional Lebesgue measure of 
the $\eta$-neighborhood of $\Sigma (v)$ satisfies 
\begin{equation}\label{Sv}
\big| \, \text{the $\eta$-neighborhood of}\; \Sigma (v)\, \big|
\approx \eta^{2}.  
\end{equation}

Suppose $\ell \in \Z^n$ satisfies 
\begin{equation}\label{assumption-ell}
0<|\delta^{\prime}2^{-j}\ell|<1  
\end{equation}
and consider $\nu \in \Z^n$  
that satisfies  
\begin{equation}\label{assumption-nu}
\dist \big(Q_{\nu}, \Sigma (\delta^{\prime}2^{-j}\ell)\big)
<
\frac{\delta 2^{-j}}{2}. 
\end{equation}
Then, for each $x \in Q_{\nu}$, there exists 
an $x^{\prime}\in \Sigma (\delta^{\prime}2^{-j}\ell)$ such that 
\[
|x-x^{\prime}|<\diam Q_{\nu}+\frac{\delta 2^{-j}}{2}
=\delta 2^{-j}, 
\]
and, since this $x^{\prime}$ satisfies 
$|x^{\prime}|=|x^{\prime}- \delta^{\prime} 2^{-j}\ell|=1$, 
we have 
\[
1- \delta 2^{-j}< |x|< 
1+ \delta 2^{-j}
\;\;
\text{and}\;\; 
1- \delta 2^{-j}< |x-\delta^{\prime} 2^{-j}\ell|
< 
1+ \delta 2^{-j}. 
\]
Hence, by 
\eqref{Sjfjx}, we see that 
\begin{align*}
&
\big|e^{-i(\frac{(n-2)\pi}{4}+\frac{\pi}{4})}\, 
2^{-j\frac{n+1}{2}} \, 
S_j f_j (x)
- c_0\, \big|
\le \frac{c_0}{10}, 
\\
&
\big|e^{-i(\frac{(n-2)\pi}{4}+\frac{\pi}{4})}\, 
2^{-j\frac{n+1}{2}} \, 
S_j f_j (x-\delta^{\prime}2^{-j}\ell)
- c_0\, \big|
\le \frac{c_0}{10}
\end{align*}
for all $x \in Q_{\nu}$ and all $j>j_0$, 
which implies that 
\begin{equation}\label{intSjfjSjgjell}
\bigg| 
\int_{Q_{\nu}} 
S_j f_j (x)
S_j f_j (x-\delta^{\prime}2^{-j}\ell)\, 
dx \bigg|
\approx 
2^{j\frac{n+1}{2}}\cdot 2^{j\frac{n+1}{2}}\cdot 2^{-jn}
=2^{j}, 
\quad 
j>j_0. 
\end{equation}

All the cubes $Q_{\nu}$ that satisfy 
\eqref{assumption-nu} certainly 
cover the 
$\frac{1}{2}\delta 2^{-j}$-neighborhood of 
$\Sigma (\delta^{\prime} 2^{-j}\ell)$. 
Conversely, 
since 
$\diam Q_{\nu}=2^{-1}\delta 2^{-j}$, 
all $Q_{\nu}$ that satisfy \eqref{assumption-nu} 
are included in 
the 
$\delta 2^{-j}$-neighborhood of 
$\Sigma (\delta^{\prime} 2^{-j}\ell)$. 
Hence, by \eqref{Sv}, we see that 
\begin{equation}\label{card-nu}
\card \big\{ \nu \in \Z^n 
\mid 
\nu \;\; \text{satisfies}
\;\; 
\eqref{assumption-nu}
\big\} 
\approx 
\frac{2^{-2j }}{2^{-jn}}
=
2^{j (n-2)} 
\end{equation}
for each $\ell$ satisfying \eqref{assumption-ell}. 
Also we have obviously  
\begin{equation}\label{card-ell}
\card \big\{ \ell \in \Z^n 
\mid 
\ell \;\; \text{satisfies}
\;\; 
\eqref{assumption-ell}
\big\} 
\approx 
2^{j n}.  
\end{equation}

From 
\eqref{intSjfjSjgjell}, \eqref{card-nu}, and \eqref{card-ell},  
we see that 
\begin{align*}
&
\big(\, 
\text{the left hand side of} 
\; 
\eqref{B2jn-3}\, \big) 
\\
&
\ge 
\sum_{\ell\, : \, \eqref{assumption-ell}} 
\bigg(  
 \sum_{\nu\, : \, \eqref{assumption-nu}} 
 \bigg| 
 \int_{Q_{\nu}} 
S_j f_j (x)
S_j f_j  (x - \delta^{\prime}2^{-j}\ell)
\, dx 
\bigg|^2 \, 
\bigg)^{q^{\prime}/2}
\\
&
\approx  
\sum_{\ell\, : \, \eqref{assumption-ell}} 
\big(  
( 
2^{j}
)^2 
\cdot  
2^{j (n-2)}
\big)^{q^{\prime}/2}
\approx 2^{j (nq^{\prime}/{2}+n)}
\end{align*}
for all $j>j_0$.

Thus \eqref{B2jn-3} implies 
$2^{j (nq^{\prime}/{2}+n)}\lesssim 
2^{j (n-m)q^{\prime}}$ for $j>j_0$, 
which is possible only when 
$m\le -n/2+n/q=n/2 -n/p$.  
This completes the proof of Theorem \ref{th-E}. 
\end{proof}

\section{Proofs of Propositions \ref{prop-a0a1a2} 
and \ref{prop-a0a1}} 
\label{G}

\subsection{Proof of Proposition \ref{prop-a0a1a2} } 
\label{G1}

In order to prove Proposition \ref{prop-a0a1a2}, we use the following lemmas. 
The first two lemmas are given in \cite{MT-flag}. 

\begin{lem}[{\cite[Lemma 2.5]{MT-flag}}]
\label{flag-Lem2.5}
Let $0<p,q \le \infty$ 
and $1/p+1/q=1/r>0$.
Assume that $\psi$ and $\phi$ are functions on 
$\R^n$ 
such that 
$\supp \psi \subset
\{a^{-1} \le |\xi| \le a\}$ and 
\begin{align*}
&
\left| 
\partial_{x}^{\alpha}
(\psi)^{\vee} (x)
\right| 
\le A (1+|x|)^{-L}, 
\quad 
|\alpha|=0, 1, 
\\
&
\left| 
\partial_{x}^{\beta}
(\phi)^{\vee} (x)
\right| 
\le B (1+|x|)^{-L}, 
\quad |\beta|\le L^{\prime}, 
\end{align*}
where $a, A, B\in (0,\infty)$ and 
$L$ and $L^{\prime}$ are sufficiently large 
integers determined by $p, q$, and $n$.   
Then 
\begin{equation*}
\left\|
\left(
\sum_{j \in \Z}
\left|\psi (2^{-j}D) f 
\cdot \varphi (2^{-j}D)g\right|^{2}
\right)^{1/2}
\right\|_{L^r}
\le c 
AB \|f\|_{H^p} \|g\|_{H^q}, 
\end{equation*}
where $c=c(n, p, q, a)$ is a positive constant. 
Moreover, if $p=\infty$ then $\|f\|_{H^p}$
can be replaced by $\|f\|_{BMO}$.
\end{lem}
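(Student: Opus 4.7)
The plan is to prove the estimate by separating the roles of the two factors in the product: use the annular support of $\psi$ to treat $\psi(2^{-j}D)f$ as a Littlewood--Paley piece of $f$, and handle $\phi(2^{-j}D)g$ via a uniform maximal majorization controlled by the grand maximal function of $g$.

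For the first factor, since $\supp\psi\subset\{a^{-1}\le|\xi|\le a\}$ and $(\psi)^\vee$ has the stated decay, the family $\{\psi(2^{-j}D)\}_{j\in\Z}$ is a one-sided Littlewood--Paley decomposition, giving
$$\Big\|\Big(\sum_{j\in\Z}|\psi(2^{-j}D)f|^2\Big)^{1/2}\Big\|_{L^p}\le c\,A\,\|f\|_{H^p}\qquad (0<p<\infty),$$
together with the usual Carleson-measure/$BMO$ analogue for $p=\infty$. For the second factor, the stronger decay $|\partial_x^\beta(\phi)^\vee(x)|\le B(1+|x|)^{-L}$ for $|\beta|\le L'$ allows us to recognize each $2^{jn}(\phi)^\vee(2^j\cdot)$ as $B$ times an admissible test kernel in the grand-maximal-function definition of $H^q$. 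This yields the pointwise bound
$$\sup_{j\in\Z}|\phi(2^{-j}D)g(x)|\le c\,B\,\mathcal{M}_N g(x),$$
and hence $\|\sup_j|\phi(2^{-j}D)g|\|_{L^q}\le c\,B\,\|g\|_{H^q}$, with the $BMO$ analogue when $q=\infty$; here $\mathcal{M}_N$ is the Hardy--Littlewood maximal function when $q>1$ and the Peetre/grand maximal function otherwise.

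Combining these two ingredients via the elementary pointwise inequality
$$\Big(\sum_j|\psi(2^{-j}D)f\cdot\phi(2^{-j}D)g|^2\Big)^{1/2}\le\Big(\sum_j|\psi(2^{-j}D)f|^2\Big)^{1/2}\cdot\sup_j|\phi(2^{-j}D)g|,$$
followed by H\"older's inequality for the exponents $1/p+1/q=1/r$, delivers the desired estimate with constant $c\,AB$. The main technical obstacle is the endpoint behavior: when $p$ or $q$ lies in $(0,1]$, or when $p=\infty$, the square-function and maximal characterizations must be invoked in their Hardy-space/$BMO$ forms rather than through Fefferman--Stein-type $L^p$ inequalities. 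This is exactly the bookkeeping that fixes how large the decay exponents $L$ and $L'$ must be taken in the hypotheses, since $L'$ must exceed the order of the test-function class used in the grand-maximal definition of $H^q$ for the smallest relevant $q$.
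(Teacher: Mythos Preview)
The paper does not give its own proof of this lemma; it is quoted verbatim from \cite[Lemma~2.5]{MT-flag} and used as a black box. So there is no proof in the paper to compare against, and the relevant question is whether your sketch stands on its own.

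For $0<p<\infty$ your argument is correct and standard: the pointwise factorization
\[
\Big(\sum_j|\psi(2^{-j}D)f\cdot\phi(2^{-j}D)g|^2\Big)^{1/2}
\le
\Big(\sum_j|\psi(2^{-j}D)f|^2\Big)^{1/2}\cdot\sup_j|\phi(2^{-j}D)g|
\]
together with the $H^p$ square-function inequality and the grand-maximal-function characterization of $H^q$ gives the result via H\"older.

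The gap is the $BMO$ endpoint. When $p=\infty$ and $f\in BMO$, the quantity $\big\|\big(\sum_j|\psi(2^{-j}D)f|^2\big)^{1/2}\big\|_{L^\infty}$ is \emph{not} controlled by $\|f\|_{BMO}$; the correct substitute is that $\sum_j|\psi(2^{-j}D)f(x)|^2\,dx\,\delta_{2^{-j}}(dt)$ is a Carleson measure with norm $\lesssim A^2\|f\|_{BMO}^2$. But a Carleson-measure bound cannot be fed into the pointwise-then-H\"older step you describe, so the combination you propose breaks down exactly here. One needs instead a Carleson-embedding or tent-space argument (or, for $0<q\le1$, an atomic decomposition of $g$: near the supporting ball use the $L^2$ Carleson embedding, far away use kernel decay). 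Your last paragraph treats the $p=\infty$ case as mere bookkeeping about the size of $L,L'$, but the obstruction is structural, not numerical. This is precisely the part of the proof in \cite{MT-flag} that requires genuine work beyond the factorization you wrote.
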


\begin{lem}[{\cite[Lemma 2.7]{MT-flag}}]
\label{flag-Lem2.7}
Let $0<p,q \le \infty$ 
and $1/p+1/q=1/r>0$.
Assume that $\psi_1$ and $\psi_2$
are functions on $\R^n$ such that
$\supp \psi_1 $, $\supp \psi_2 
\subset \{a^{-1}\le |\xi|\le a\}$ 
and 
\begin{align*}
&
\left| 
\partial_{x}^{\alpha}
(\psi_1)^{\vee} (x)
\right| 
\le A (1+|x|)^{-L}, 
\quad 
|\alpha|\le L^{\prime}, 
\\
&
\left| 
\partial_{x}^{\beta}
(\psi_2)^{\vee} (x)
\right| 
\le B (1+|x|)^{-L}, 
\quad |\beta|\le L^{\prime}, 
\end{align*}
where $a, A, B\in (0,\infty)$ and 
$L$ and $L^{\prime}$ are sufficiently large 
integers determined by $p, q$, and $n$.   
Then 
\[
\left\|
\sum_{j \in \Z}
\left|\psi_1 (2^{-j}D)f \cdot 
\psi_2 (2^{-j}D)g
\right|
\right\|_{L^r}
\le c 
AB 
\|f\|_{H^p} \|g\|_{H^q}, 
\end{equation*}
where $c=c(n, p, q, a)$ is a positive constant. 
Moreover, if $p=\infty$ (respectively, $q=\infty$) 
then $\|f\|_{H^p}$ (respectively, $\|g\|_{H^q}$)
can be replaced by $\|f\|_{BMO}$
(respectively, $\|g\|_{BMO}$).
\end{lem}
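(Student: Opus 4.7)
The plan is to reduce the $\ell^1_j$-sum-of-products inside the $L^r$-norm to a product of Littlewood--Paley square functions via a pointwise Cauchy--Schwarz inequality, and then to estimate each square function separately in the appropriate Hardy or $BMO$ norm. Concretely, the pointwise bound
\[
\sum_{j\in\Z}\bigl|\psi_1(2^{-j}D)f\cdot\psi_2(2^{-j}D)g\bigr|
\le
\Bigl(\sum_{j\in\Z}|\psi_1(2^{-j}D)f|^2\Bigr)^{1/2}
\Bigl(\sum_{j\in\Z}|\psi_2(2^{-j}D)g|^2\Bigr)^{1/2}
\]
together with H\"older's inequality applied with $1/p+1/q=1/r$ in the outer $L^r$-norm reduces the problem to the one-variable square-function estimates
\[
\Bigl\|\Bigl(\sum_{j\in\Z}|\psi_i(2^{-j}D)h|^2\Bigr)^{1/2}\Bigr\|_{L^p}\lesssim A\,\|h\|_{H^p},\qquad i=1,2.
\]

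For the range $0<p<\infty$ I would deduce the square-function bound from standard Littlewood--Paley theory: the annular Fourier support of $\psi_i(2^{-j}\cdot)$ combined with the rapid pointwise decay $|(\psi_i)^\vee(x)|\lesssim A(1+|x|)^{-L}$ allows one to dominate $|\psi_i(2^{-j}D)h|$ pointwise by a Peetre-type maximal function of a fixed Littlewood--Paley piece of $h$, and then the vector-valued Fefferman--Stein maximal inequality together with the Littlewood--Paley characterization of $H^p$ closes the estimate. The higher-order bound $|\partial^\alpha(\psi_i)^\vee|\lesssim A(1+|x|)^{-L}$ for $|\alpha|\le L'$ is what makes the argument work in the quasi-Banach range $p\le 1$, because it guarantees that $\psi_i(2^{-j}D)$ maps $H^p$-atoms to $H^p$-molecules of controlled norm.

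The endpoint $p=\infty$ (where $H^\infty$ is replaced by $BMO$) is the main obstacle, because the pointwise Cauchy--Schwarz step is not usable: the Littlewood--Paley square function of a generic $BMO$ function is not in $L^\infty$. Instead, I would estimate the product directly in $L^r=L^q$ through the Carleson-measure/tent-space formalism of Coifman--Meyer--Stein: the measure $|\psi_1(2^{-j}D)f(x)|^2\,dx\,d\mu(j)$ is a Carleson measure on the upper half-space with Carleson norm $\lesssim A^2\|f\|_{BMO}^2$, while the square function of $g$ belongs to the appropriate tent space with norm $\lesssim B\,\|g\|_{H^q}$; the bilinear tent--Carleson pairing then yields the required $L^q$ control. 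The symmetric case $q=\infty$ is handled by swapping the roles of $f$ and $g$.

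The hard part is exactly this $BMO$ endpoint, where the pointwise square-function bound breaks down and one must instead keep track of the product through a Carleson-tent duality argument. Everything else is standard Littlewood--Paley bookkeeping: choosing $L$ and $L'$ large enough (in terms of $n,p,q$) so that the Peetre maximal function and the $H^p$-molecular norms are controlled, and verifying that the frequency support hypotheses ensure that the pieces $\psi_i(2^{-j}D)h$ are genuine Littlewood--Paley localizations of $h$ up to a fixed, $j$-independent dilation constant.
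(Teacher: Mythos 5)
First, note that the paper does not prove this lemma at all: it is imported verbatim from \cite[Lemma 2.7]{MT-flag}, so there is no in-paper argument to compare yours against, and your proposal has to be judged on its own terms. For $0<p,q<\infty$ your route is correct and standard: pointwise Cauchy--Schwarz in $j$, H\"older with $1/p+1/q=1/r$, and the Littlewood--Paley square-function characterization of $H^p$ (the role of the derivative bounds on $(\psi_i)^\vee$ is to run the Peetre maximal function domination and the vector-valued Fefferman--Stein inequality in the quasi-Banach range --- your mention of ``molecules'' is a little off-target here, but harmless).

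The genuine gap is in the $BMO$ endpoint. The Carleson-measure statement for $\sum_j|\psi_1(2^{-j}D)f(x)|^2\,dx\,\delta_{t=2^{-j}}$ is correct, but the Coifman--Meyer--Stein pairing you invoke, namely $\iint|u\,v|\,\tfrac{dy\,dt}{t}\lesssim\int_{\R^n}A(u)(x)\,C(v)(x)\,dx$, controls the \emph{integral} of $\sum_j|F_jG_j|$, i.e.\ it delivers exactly the case $r=1$ (so $q=1$ when $p=\infty$). The lemma asserts an $L^{r}$ bound for every $0<q<\infty$, and there is no pointwise inequality between $\sum_j|F_j(x)G_j(x)|$ and a product of a Carleson functional and an area functional at $x$, so ``the bilinear tent--Carleson pairing then yields the required $L^q$ control'' does not follow as stated. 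To close this you need an additional argument: for $q\le 1$ an atomic decomposition of $g$, splitting the $L^q$ norm into the doubled supporting ball (where one uses the already-established $BMO\times L^2\to L^2$ case and H\"older) and its complement (where one uses $\sup_j\|\psi_1(2^{-j}D)f\|_{L^\infty}\lesssim A\|f\|_{BMO}$ together with the decay of $\psi_2(2^{-k}D)g$ coming from the moment conditions of the atom); for $1<q<\infty$ either a localized version of the Carleson embedding or the tent-space multiplication $T^\infty_2\cdot T^q_2\subset T^q_1$ combined with the $L^q$-comparability of vertical and conical functionals. It is worth remarking that the only instance of the lemma actually used in this paper is $(p,q,r)=(\infty,1,1)$ (in the proofs of Lemma \ref{lem-paraproduct-A}~$(5^{\circ})$ and Lemma \ref{lem-paraproduct-B}), which is precisely the case your pairing does cover.
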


\begin{lem}\label{lem-paraproduct-A}
Let $m_2 < 0$ and suppose 
the multiplier $\tau$ is given by 
\begin{equation}
\tau (\xi, \eta)
=
\sum_{j-k \ge 3}
c_{j,k} 
\psi_{1}(2^{-j}\xi)
\psi_2 (2^{-k}\eta) ,  
\end{equation}
where 
$(c_{j,k})$ a sequence of complex numbers satisfying 
$\left| c_{j,k} \right| \le 2^{(j-k)m_2}$ and 
$\psi_1 , \psi_2$ are functions in $C_{0}^{\infty}(\R^n)$ 
such that 
$\supp \psi_1, \, \supp \psi_2 \subset 
\{2^{-1}\le |\xi|\le 2\}$. 
Then $\tau$ belongs to the following 
multiplier classes: 
\begin{align*}
&
\calM (H^p \times H^q \to L^r), 
\quad 
0<p,q<\infty, \;\; 1/p+1/q=1/r, 
\\
&
\calM (H^p \times BMO \to L^p), 
\quad 
0<p<\infty, 
\\
&
\calM (BMO \times H^q \to L^q), 
\quad 0<q<\infty, 
\\
&
\calM (BMO \times BMO \to BMO). 
\end{align*}
Moreover, in each case, 
the multiplier norm of $\tau$ 
is bounded by 
$c \|\psi_1\|_{C^N} 
\|\psi_2\|_{C^N}$ 
with 
$c=c(n,m_2, p,q)$ and $N=N(n,p,q)$. 
\end{lem}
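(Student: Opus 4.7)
The plan is to reorganize the double sum by grouping over the ``high-frequency'' index $j$, thereby rewriting $\tau$ as a paraproduct with a variable low-frequency profile. First I would write
\begin{equation*}
T_{\tau}(f,g)(x) = \sum_{j} \psi_1(2^{-j}D)f(x) \cdot G_j(x),
\quad
G_j = \sum_{\ell \ge 3} c_{j,j-\ell}\, \psi_2(2^{-(j-\ell)}D)g.
\end{equation*}
Setting $\Psi_j(\eta) = \sum_{\ell \ge 3} c_{j,j-\ell}\, \psi_2(2^{\ell}\eta)$, one has $G_j = \Psi_j(2^{-j}D)g$. Since every $\ell$ appearing is $\ge 3$, the symbol $\Psi_j$ is supported in $\{|\eta|\le 2^{-2}\}$, so each product $\psi_1(2^{-j}D)f \cdot \Psi_j(2^{-j}D)g$ has frequency support in a lacunary annulus $\{2^{j-2}\le|\zeta|\le 5\cdot 2^{j-1}\}$.

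Next I would derive uniform-in-$j$ estimates on $(\Psi_j)^{\vee}$. Using $|c_{j,j-\ell}| \le 2^{\ell m_2}$, the rapid decay of $(\psi_2)^{\vee}$, and splitting the dyadic sum at $\ell_0 \approx \log_2|y|$, one obtains
\begin{equation*}
|\partial_y^{\beta}(\Psi_j)^{\vee}(y)| \le C_{\beta}\|\psi_2\|_{C^{N_0}} (1+|y|)^{m_2-n-|\beta|},
\end{equation*}
with constants independent of $j$ for $|\beta|$ in a range determined by $n,p,q$. The hypothesis $m_2<0$ is used here to ensure the factor $2^{\ell(m_2-n)}$ is summable in $\ell$, so the envelope decays faster than one would get from a generic compactly supported smooth bump.

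With these estimates I would apply Lemma \ref{flag-Lem2.5} with $\psi=\psi_1$ and with the $j$-dependent profile $\Psi_j$ playing the role of the fixed $\phi$ (the proof of Lemma \ref{flag-Lem2.5} uses only the pointwise decay of $(\phi)^{\vee}$, so it extends to variable $\phi_j$ obeying uniform estimates) to obtain
\begin{equation*}
\left\|\Bigl(\sum_j |\psi_1(2^{-j}D)f \cdot \Psi_j(2^{-j}D)g|^2\Bigr)^{1/2}\right\|_{L^r} \lesssim \|\psi_1\|_{C^N}\|\psi_2\|_{C^N}\|f\|_X\|g\|_Y,
\end{equation*}
where $(X,Y)$ is $(H^p,H^q)$, $(H^p,BMO)$, or $(BMO,H^q)$ as appropriate. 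The lacunary frequency support of each summand then allows me to pass from the square function to $\|T_{\tau}(f,g)\|_{L^r}$ via the usual Littlewood--Paley characterization, handling the three $L^r$-output cases.

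The main obstacle is legitimizing the ``variable profile'' version of Lemma \ref{flag-Lem2.5}: one has to check that the proof of that lemma depends only on the uniform pointwise decay of $(\Psi_j)^{\vee}$ in $j$, and that the decay order $n-m_2$ supplied by the envelope $(1+|y|)^{m_2-n}$ is large enough for the Hardy-space exponents under consideration. The BMO-output case is the next delicate point: since the $L^r$ square-function characterization is unavailable, I would instead use the lacunary-frequency Carleson-measure (equivalently, lacunary square-function) characterization of $BMO$ together with a $BMO$-version of Lemma \ref{flag-Lem2.5} to conclude $\|T_{\tau}(f,g)\|_{BMO}\lesssim \|f\|_{BMO}\|g\|_{BMO}$.
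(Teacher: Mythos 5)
Your reduction to the form $\sum_j\psi_1(2^{-j}D)f\cdot\Psi_j(2^{-j}D)g$ and the envelope computation are correct: since $|c_{j,j-\ell}|\le 2^{\ell m_2}$ and $m_2-n<0$, splitting the $\ell$-sum at $2^{\ell}\approx|y|$ does give $|\partial_y^{\beta}(\Psi_j)^{\vee}(y)|\lesssim\|\psi_2\|_{C^{N_0}}(1+|y|)^{m_2-n-|\beta|}$ uniformly in $j$. The gap sits exactly at the point you flag as ``the main obstacle,'' and it is not a formality. The decay order $n-m_2$ is a \emph{fixed} number determined by $n$ and $m_2$ alone, whereas the hypothesis of Lemma \ref{flag-Lem2.5} requires $|(\phi)^{\vee}(x)|\le B(1+|x|)^{-L}$ with $L=L(n,p,q)$ that must grow without bound as $\min(p,q)\to 0$ (this is forced by the grand-maximal-function characterization of $H^q$ underlying any such square-function estimate: one needs roughly $L>n/q$). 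Moreover the shortfall cannot be repaired by taking $\psi_2$ smoother: for the extremal admissible choice $c_{j,k}=2^{(j-k)m_2}$ the profile satisfies $\Psi_j(\eta)\approx|\eta|^{-m_2}$ as $\eta\to0$, so $\Psi_j$ has only a finite order of vanishing at the origin and $(\Psi_j)^{\vee}$ genuinely decays no faster than $|y|^{m_2-n}$. Consequently your argument does not cover $\calM(H^p\times H^q\to L^r)$ or $\calM(BMO\times H^q\to L^q)$ once $q$ is small, and the statement requires all $0<q<\infty$. (The $H^p\times BMO\to L^p$ case, where the second slot only needs $\|\Psi_j(2^{-j}D)g\|_{L^\infty}\lesssim\|g\|_{BMO}$, does survive your approach; the $BMO\times BMO\to BMO$ case as you describe it is too vague to assess.)

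To close the gap one must not lump the $k$-sum into a single low-pass profile when the second argument is in $H^q$: the individual pieces $\psi_2(2^{-k}D)g$ are annulus-localized with an arbitrarily nice kernel, and that is what must be exploited. The paper keeps the double sum and applies the pointwise Schur/Cauchy--Schwarz bound $|T_{\tau}(f,g)(x)|\lesssim\|\psi_1(2^{-j}D)f(x)\|_{\ell^2_j}\,\|\psi_2(2^{-k}D)g(x)\|_{\ell^2_k}$ followed by H\"older and Littlewood--Paley for $H^p\times H^q\to L^r$; for $BMO\times H^q\to L^q$ it dominates $|\psi_2(2^{-k}D)g|$ by a rapidly decaying average of $Mg$ when $q>1$ and runs an atomic-decomposition argument when $q\le1$; and it obtains $BMO\times BMO\to BMO$ by duality with $H^1$ via Lemma \ref{flag-Lem2.7}. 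Your profile construction is essentially the right tool for Lemma \ref{lem-paraproduct-B}, where the low-pass factor is a fixed $\phi$ and the second argument lies in $L^\infty$, but it loses too much information here.
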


\begin{proof} 
We divide the proof into several cases. 

{$(1^{\circ})$ $H^p \times H^q \to L^r$, $0<p,q<\infty$, $1/p+1/q=1/r$.}  
From the assumption $|c_{j,k}|\le 2^{(j-k)m_2}$ with $m_2<0$, 
we can use Schur's lemma 
\begin{align*}
\left| 
T_{\tau}(f,g)(x)
\right|
&
=
\left| 
\sum_{j-k \ge 3}
c_{j,k} 
\psi_1 (2^{-j}D)f(x) 
\psi_2 (2^{-k}D)g(x)
\right|
\\
&
\le 
\sum_{j-k \ge 3}
2^{(j-k)m_2}
\left| \psi_1 (2^{-j}D)f(x)\right|\,  
\left| \psi_2 (2^{-k}D)g(x)\right|
\\
&
\lesssim 
\left\| \psi_1 (2^{-j}D)f(x)\right\|_{\ell^2_j}
\left\| \psi_2 (2^{-k}D)g(x)\right\|_{\ell^2_k}  
\end{align*}
(for Schur's lemma, 
see {\it e.g.}\ \cite[Appendix A]{G-modern}).  
The above inequality together with 
H\"older's inequality and the Littlewood--Paley inequalities 
gives 
\begin{align*}
\left\| T_{\tau} (f,g) \right\|_{L^r}
&
\lesssim 
\left\| \left\| \psi_1 (2^{-j}D)f(x)\right\|_{\ell^2_j} \right\|_{L^p_x}
\left\| \left\| \psi_2 (2^{-k}D)g(x)\right\|_{\ell^2_k} \right\|_{L^q_x}
\\
& 
\lesssim 
\|\psi_1\|_{C^N} 
\|f\|_{H^p} 
\|\psi_2\|_{C^N}\|g\|_{H^q}, 
\end{align*}
which is the desired estimate.

{$(2^{\circ})$ $H^p \times BMO \to L^p$, $0<p<\infty$.} 
Observe that, if $j-k\ge 3$, then 
the support of the Fourier transform 
of 
$\psi_1 (2^{-j}D)f\cdot \psi_2 (2^{-k}D)g$ is 
included in the annulus 
$\{ 2^{j-2} \le |\zeta| \le 2^{j+2}\}$. 
Hence, the Littlewood-Paley theory for $H^p$ gives
\begin{align*}
&
\left\| 
\sum_{j-k \ge 3}
c_{j,k}
\psi_1 (2^{-j}D)f\cdot 
\psi_2 (2^{-k}D)g
\right\|_{L^p}
\\
&
\lesssim 
\left\| 
\sum_{j-k \ge 3}
c_{j,k}
\psi_1 (2^{-j}D)f\cdot 
\psi_2 (2^{-k}D)g
\right\|_{H^p}
\\
&
\lesssim 
\left\|
\bigg\| 
\sum_{k=-\infty}^{j-3}
c_{j,k}
\psi_1 (2^{-j}D)f(x) 
\psi_2 (2^{-k}D)g(x)
\bigg\|_{\ell^2_j}
\right\|_{L^p_x}
\\
&=: (\ast). 
\end{align*}
Since 
$\| \psi_2 (2^{-k}D)g\|_{L^{\infty}}
\lesssim \|\psi_2\|_{C^N} \|g\|_{BMO}$ 
(see, {\it e.g.,}\/ \cite[Chapter IV, Section 4.3.3]{S} )
and since 
$\sum_{k=-\infty}^{j-3} |c_{j,k}| \le 
\sum_{k=-\infty}^{j-3} 2^{(j-k)m_2} \approx 1$, 
we obtain 
\begin{align*}
(\ast)
&
\lesssim 
\|\psi_2\|_{C^N} \|g\|_{BMO}
\left\| 
\big\| 
\psi_1 (2^{-j}D)f(x) 
\big\|_{\ell^2_j}
\right\|_{L^p_x}
\\
&
\lesssim 
\|\psi_2\|_{C^N} \|g\|_{BMO}
\|\psi_1\|_{C^N} \|f\|_{H^p}, 
\end{align*}
which is the desired estimate.

{$(3^{\circ})$ $BMO \times H^q \to L^q$, $1<q<\infty$.}
By the same reason as in $(2^{\circ})$, 
the Littlewood-Paley theory for $L^q$, $1<q<\infty$, yields 
\begin{align*}
&
\left\| 
\sum_{j-k \ge 3}
c_{j,k}
\psi_1 (2^{-j}D)f\cdot 
\psi_2 (2^{-k}D)g
\right\|_{L^q}
\\
&
\lesssim 
\left\|
\bigg\| 
\sum_{k=-\infty}^{j-3}
c_{j,k}
\psi_1 (2^{-j}D)f(x) 
\psi_2 (2^{-k}D)g(x)
\bigg\|_{\ell^2_j}
\right\|_{L^q_x}
\\
&=: (\ast \ast). 
\end{align*}
Take a function $\theta \in C_{0}^{\infty}(\R^n)$ such 
that 
$\theta (\eta)=1$ 
for $|\eta|\le 2$. 
Then, for $j-k\ge 3$, we have 
\[
\psi_2 (2^{-k}D) g (x) 
= \theta (2^{-j} D) \psi_2 (2^{-k}D) g (x) 
= \int 2^{jn} (\theta)^{\vee} \left(2^j(x-y)\right) 
\psi_2 (2^{-k}D) g (y)\, dy.   
\]
Combining this formula with 
the inequality 
$|\psi_2 (2^{-k}D) g (y)| \lesssim \|\psi_2\|_{C^N} Mg (y)$, 
where $M$ is the Hardy-Littlewood maximal operator,  
and with the inequality 
$|(\theta)^{\vee} (z)|\lesssim (1+|z|^2)^{-L/2}$, 
we have  
\[
\left| \psi_2 (2^{-k}D) g (x) \right| 
\lesssim  
\| \psi_2 \|_{C^N} 
S_j (Mg) (x), 
\]
where $S_j$ is defined by 
\[
S_j h (x) 
=
\int 2^{jn} \big( 1+\big|2^j (x-y) \big|^2  \big)^{-L/2}\, h(y)\, dy
\]
with $L>0$ sufficiently large. 
Hence 
\begin{align*}
(\ast \ast)
&
\lesssim  
\left\|
\bigg\| 
\sum_{k=-\infty}^{j-3}
2^{(j-k)m_2}
\left| \psi_1 (2^{-j}D)f(x) \right|
\|\psi_2\|_{C^N} S_j (Mg) (x)
\bigg\|_{\ell^2_j}
\right\|_{L^q_x}
\\
&
\approx 
\|\psi_2\|_{C^N}
\left\|
\bigg\| 
\left| \psi_1 (2^{-j}D)f(x) \right|
S_j (Mg) (x)
\bigg\|_{\ell^2_j}
\right\|_{L^q_x}
\\
&
\lesssim 
\|\psi_2\|_{C^N}
\| \psi_1 \|_{C^N} 
\|f\|_{BMO}
\|Mg\|_{L^q}
\\
&
\approx  
\|\psi_2\|_{C^N}
\| \psi_1 \|_{C^N} 
\|f\|_{BMO}
\|g\|_{L^q}, 
\end{align*}
where the second $\lesssim$ follows from 
Lemma \ref{flag-Lem2.5} and 
the last $\approx$ holds because $q>1$. 
 
{$(4^{\circ})$ $BMO \times H^q \to L^q$, $0<q\le 1$.} 
By virtue of the atomic decomposition for $H^q$, 
it is sufficient to show the uniform estimate 
of $\|T_{\tau}(f,g)\|_{L^q}$ for all $H^q$-atoms $g$. 
By translation, it is sufficient to consider 
the $H^q$-atoms  
supported on balls centered at the origin. 
Thus we assume 
\[
\supp g \subset \{|x|\le r\}, 
\quad 
\|g\|_{L^{\infty}}\le r^{-n/q}, 
\quad 
\int g(x)x^{\alpha}\, dx =0 
\;\; \text{for}\;\; 
|\alpha|\le [n/q-n], 
\]
and we shall prove 
$\|T_{\tau}(f,g)\|_{L^q}\lesssim \|\psi_1\|_{C^N}
\| \psi_2 \|_{C^N} 
\|f\|_{BMO}$.

By the same reason as in $(2^{\circ})$, 
the Littlewood-Paley theory 
for $H^q$ reduces the proof to the estimate of 
\[
\left\|
\bigg\| 
\sum_{k=-\infty}^{j-3}
c_{j,k}
\psi_1 (2^{-j}D)f(x) 
\psi_2 (2^{-k}D)g(x)
\bigg\|_{\ell^2_j}
\right\|_{L^q_x}. 
\]

We first estimate the $L^q$ norm on $|x|\le 2r$. 
Using H\"older's inequality and using 
the result proved in $(3^{\circ})$ (with $q=2$), 
we have 
\begin{align*}
&
\left\|
\bigg\| 
\sum_{k=-\infty}^{j-3}
c_{j,k}
\psi_1 (2^{-j}D)f(x) 
\psi_2 (2^{-k}D)g(x)
\bigg\|_{\ell^2_j}
\right\|_{L^q(|x|\le 2r)}
\\
&
\lesssim 
r^{\frac{n}{q}-\frac{n}{2}} 
\left\|
\bigg\| 
\sum_{k=-\infty}^{j-3}
c_{j,k}
\psi_1 (2^{-j}D)f(x) 
\psi_2 (2^{-k}D)g(x)
\bigg\|_{\ell^2_j}
\right\|_{L^2(|x|\le 2r)}
\\
&
\lesssim 
r^{\frac{n}{q}-\frac{n}{2}} 
\|\psi_1\|_{C^N} 
\|\psi_2\|_{C^N} 
\|f\|_{BMO}
\|g\|_{L^2}
\\
&
\lesssim 
\|\psi_1\|_{C^N} 
\|\psi_2\|_{C^N} 
\|f\|_{BMO}. 
\end{align*}

Next, we estimate the $L^q$ norm on $|x|> 2r$. 
Using the inequality 
$\|\psi_1 (2^{-j}D)f(x) \|_{L^{\infty}} \lesssim 
\|\psi_1\|_{C^N} \|f\|_{BMO}$, we have
\begin{align}
&
\left\|
\bigg\| 
\sum_{k=-\infty}^{j-3}
c_{j,k}
\psi_1 (2^{-j}D)f(x) 
\psi_2 (2^{-k}D)g(x)
\bigg\|_{\ell^2_j}
\right\|_{L^q(|x|> 2r)}
\nonumber 
\\
&
\lesssim 
\|\psi_1\|_{C^N} \|f\|_{BMO}
\left\|
\bigg\| 
\sum_{k=-\infty}^{j-3}
2^{(j-k)m_2}
\left| \psi_2 (2^{-k}D)g(x) \right| 
\bigg\|_{\ell^2_j}
\right\|_{L^q(|x|> 2r)}
\nonumber 
\\
&
\le 
\|\psi_1\|_{C^N} \|f\|_{BMO}
\left\|
\sum_{k=-\infty}^{\infty}
\big\| 
2^{(j-k)m_2}
\big\|_{\ell^2 (j\ge k+3)}
\big| \psi_2 (2^{-k}D)g(x) \big| 
\right\|_{L^q(|x|> 2r)}
\nonumber 
\\
&
\approx  
\|\psi_1\|_{C^N} \|f\|_{BMO}
\left\|
\sum_{k=-\infty}^{\infty} 
\left| \psi_2 (2^{-k}D)g(x) \right| 
\right\|_{L^q(|x|> 2r)}
\nonumber 
\\
&
\le 
\|\psi_1\|_{C^N} \|f\|_{BMO}
\left\| 
\left\| \psi_2 (2^{-k}D)g(x) 
\right\|_{L^q(|x|> 2r)}
\right\|_{\ell^q_k}. 
\label{111}
\end{align}
To estimate the $L^q$-norm of the functions 
$\psi_2 (2^{-k}D)g(x) $ on $|x|>2r$, 
we write 
\[
\psi_2 (2^{-k}D)g(x) 
=
\int_{|y|\le r} 
2^{kn} (\psi_2)^{\vee} \left(2^k (x-y)\right) 
g(y)\, dy.
\]
Then using the size estimate of $g$ and the 
moment condition on $g$, we have 
\[
\left| 
\psi_2 (2^{-k}D)g(x) 
\right| 
\lesssim 
\|\psi_2\|_{C^N} 
2^{kn} \left( 1+ 2^{k} |x|\right)^{-L} \, 
r^{-\frac{n}{q} +n}
\min \left\{
1, \, \big( 2^k r \big)^{[\frac{n}{q}-n]+1}
\right\}, 
\quad 
|x|> 2r  
\]
(see \cite[inequalities (2.7) and (2.8)]{MT-flag}).  
Hence 
\begin{align}
&
\left\| \psi_2 (2^{-k}D)g(x) 
\right\|_{L^q(|x|> 2r)}
\nonumber 
\\
&
\lesssim 
\|\psi_2\|_{C^N} \, 
r^{-\frac{n}{q}  +n} \min 
\left\{
1, \, \big( 2^k r \big)^{[\frac{n}{q} -n]+1}
\right\} 
\left\| 
2^{kn} \big( 1+ 2^{k} |x|\big)^{-L} 
\right\|_{L^q (|x|>2r)}
\nonumber 
\\
&
\approx 
\|\psi_2\|_{C^N} 
\min 
\left\{
\big( 2^{k}r \big)^{-L+n}, 
\, \big( 2^k r \big)^{n-\frac{n}{q}+[\frac{n}{q}-n]+1}
\right\} . 
\label{222}
\end{align}
From \eqref{111} and \eqref{222}, 
we obtain 
\begin{align*}
&
\left\|
\bigg\| 
\sum_{k=-\infty}^{j-3}
c_{j,k}
\psi_1 (2^{-j}D)f(x) 
\psi_2 (2^{-k}D)g(x)
\bigg\|_{\ell^2_j}
\right\|_{L^q(|x|> 2r)}
\\
&
\lesssim 
\|\psi_1\|_{C^N} \|f\|_{BMO}
\|\psi_2\|_{C^N}
\left\| 
\min 
\left\{
\big( 2^{k}r \big)^{-L+n}, 
\, \big( 2^k r \big)^{n-\frac{n}{q}+[\frac{n}{q}-n]+1}
\right\} 
\right\|_{\ell^q_k}
\\
&\lesssim  
\|\psi_1\|_{C^N} \|f\|_{BMO}
\|\psi_2\|_{C^N}.  
\end{align*}

{$(5^{\circ})$ $BMO \times BMO \to BMO$.}
By virtue of the duality between $BMO$ and $H^1$, 
it is sufficient to show the following inequality: 
\begin{equation}\label{fgh}
\begin{split}
&
\left| 
\int 
\sum_{j-k\ge 3} 
c_{j,k} 
\psi_1 (2^{-j}D) f(x)\, 
\psi_2 (2^{-k}D) g(x)\, 
h(x)\, 
dx \, 
\right|
\\
&
\lesssim 
\| \psi_1 \|_{C^N} \|f\|_{BMO} 
\| \psi_2 \|_{C^N} \|g\|_{BMO} 
\|h\|_{H^1}. 
\end{split}
\end{equation}
Notice that if $j-k \ge 3$ then 
the support of the Fourier transform 
of  $\psi_1 (2^{-j}D) f 
\cdot 
\psi_2 (2^{-k}D) g$ is included in the annulus 
$\{2^{j-2}\le |\zeta|\le 2^{j+2}\}$. 
Thus, if we take a function $\widetilde{\psi}\in C_{0}^{\infty}(\R^n)$ 
such that 
$\supp \widetilde{\psi} \subset \{2^{-3}\le |\zeta|\le 2^{3}\}$ and 
$\widetilde{\psi} (\zeta)=1$ on $2^{-2}\le |\zeta|\le 2^{2}$, 
then the integral in \eqref{fgh} can be written as 
\begin{align*}
&
\int 
\sum_{j-k\ge 3} 
c_{j,k} 
\psi_1 (2^{-j}D) f(x)\, 
\psi_2 (2^{-k}D) g(x)\, 
h(x)\, 
dx 
\\
&
=
\int 
\sum_{j-k\ge 3} 
c_{j,k} 
\psi_1 (2^{-j}D) f(x)\, 
\psi_2 (2^{-k}D) g(x)\, 
\widetilde{\psi}(2^{-j}D)h(x)\, 
dx. 
\end{align*} 
Hence, using the estimate 
$\|\psi_2 (2^{-k}D) g\|_{L^{\infty}}\lesssim 
\|\psi_2\|_{C^N}\|g\|_{BMO}$ 
and the assumption $|c_{j,k}| \le 2^{(j-k)m_2}$, 
$m_2<0$, 
we have 
\begin{align*}
&
\text{
(the left hand side of \eqref{fgh})
}
\\
&
\lesssim 
\int 
\sum_{j-k\ge 3} 
2^{(j-k)m_2}
\big| \psi_1 (2^{-j}D) f(x) \big| \, 
\big| \psi_2 (2^{-k}D) g(x)\big| \, 
\big| \widetilde{\psi}(2^{-j}D)h(x)\big| \, 
dx
\\
&
\lesssim 
\|\psi_2\|_{C^N}\|g\|_{BMO}
\int 
\sum_{j=-\infty}^{\infty} 
\big| \psi_1 (2^{-j}D) f(x) \big| \, 
\big| \widetilde{\psi}(2^{-j}D)h(x)\big| \, 
dx
\\
&
\lesssim 
\|\psi_2\|_{C^N}\|g\|_{BMO}
\|\psi_1\|_{C^N}\|f\|_{BMO}
\|h\|_{H^1}, 
\end{align*}
where the last $\lesssim$ follows from Lemma \ref{flag-Lem2.7}. 
This completes the proof of Lemma \ref{lem-paraproduct-A}.  
\end{proof}

\begin{lem}\label{lem-paraproduct-B}
Suppose the multiplier $\tau$ is defined by 
\begin{equation*}
\tau (\xi, \eta)
=
\sum_{j=-\infty}^{\infty}
c_{j} 
\psi_{1}(2^{-j}\xi)
\phi (2^{-j+3}\eta)  
\end{equation*}
with 
a sequence of complex numbers 
$(c_{j})$ satisfying 
$\left| c_{j} \right| \le 1$ and 
with 
$\psi_1 , \phi \in C_{0}^{\infty}(\R^n)$ 
such that 
$
\supp \psi_1 \subset 
\{2^{-1}\le |\xi|\le 2\}$ and 
$\supp \phi \subset \{|\eta|\le 2\}$. 
Then $\tau$ belongs to the following multiplier classes: 
\begin{align*}
&
{\calM (H^p \times H^q \to L^r), \quad  0<p, q<\infty, \;\;1/p+1/q=1/r,} 
\\
&
{\calM (H^p \times L^{\infty} \to L^p), \quad 0<p<\infty,} 
\\
&
{\calM (BMO \times H^q \to L^q), \quad 0<q<\infty,} 
\\
&
{\calM (BMO \times L^{\infty} \to BMO).} 
\end{align*}
Moreover, in each case, 
the multiplier norm of $\tau$ 
is bounded by 
$c \|\psi_1\|_{C^N} 
\|\phi\|_{C^N}$ 
with 
$c=c(n,p,q)$ and $N=N(n,p,q)$. 
\end{lem}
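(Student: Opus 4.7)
The plan is to follow the pattern of Lemma \ref{lem-paraproduct-A}, exploiting a Fourier-support observation that makes the present lemma more straightforward. Writing $\widetilde\phi(\eta)=\phi(8\eta)$, so that $\widetilde\phi$ is supported in $\{|\eta|\le 1/4\}$ with $\|\widetilde\phi\|_{C^N}\lesssim\|\phi\|_{C^N}$ and $\phi(2^{-j+3}\eta)=\widetilde\phi(2^{-j}\eta)$, I note that if $(\xi,\eta)\in\supp\big(\psi_1(2^{-j}\cdot)\widetilde\phi(2^{-j}\cdot)\big)$ then $2^{j-1}\le|\xi|\le 2^{j+1}$ while $|\eta|\le 2^{j-2}$, so $|\xi+\eta|\in[2^{j-2},2^{j+2}]$. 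Hence the $j$-th summand of $T_\tau(f,g)$ has Fourier support in a lacunary annulus at scale $2^j$, and the standard Littlewood--Paley theory on $H^r$ (combined with $\|F\|_{L^r}\le \|F\|_{H^r}$ for functions $F$) yields
\[
\|T_\tau(f,g)\|_{L^r} \lesssim \left\|\bigg(\sum_{j\in\Z}\big|\psi_1(2^{-j}D)f\cdot\widetilde\phi(2^{-j}D)g\big|^2\bigg)^{1/2}\right\|_{L^r},
\]
uniformly in $(c_j)$ with $|c_j|\le 1$.

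For the first three mapping statements, this square-function bound reduces everything to Lemma \ref{flag-Lem2.5}: the annular factor $\psi_1$ and the ball-supported Schwartz $\widetilde\phi$ satisfy the hypotheses, and applying the lemma with the indicated $(p,q)$ produces $\calM(H^p\times H^q\to L^r)$ for $0<p,q<\infty$, $\calM(H^p\times L^\infty\to L^p)$ for $0<p<\infty$ (using the convention $H^\infty=L^\infty$), and $\calM(BMO\times H^q\to L^q)$ for $0<q<\infty$ (using the clause in Lemma \ref{flag-Lem2.5} that allows $\|f\|_{H^p}$ to be replaced by $\|f\|_{BMO}$ when $p=\infty$). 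All constants depend only on $\|\psi_1\|_{C^N}$ and $\|\phi\|_{C^N}$, as required.

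The remaining case $\calM(BMO\times L^\infty\to BMO)$, which falls outside the range $1/p+1/q>0$ of Lemma \ref{flag-Lem2.5}, is handled by $H^1$--$BMO$ duality exactly as in step $(5^\circ)$ of Lemma \ref{lem-paraproduct-A}. Given $h\in H^1$, choose a fattened annular bump $\widetilde\psi\in C_0^\infty(\R^n)$ equal to $1$ on $\{2^{-2}\le|\zeta|\le 2^2\}$ and supported in $\{2^{-3}\le|\zeta|\le 2^3\}$; the lacunary Fourier-support observation above lets us insert $\widetilde\psi(2^{-j}D)h$ into the $j$-th term of the pairing, giving
\[
\left|\int T_\tau(f,g)\,h\,dx\right|\le \sum_{j\in\Z}\int\big|\psi_1(2^{-j}D)f\big|\,\big|\widetilde\phi(2^{-j}D)g\big|\,\big|\widetilde\psi(2^{-j}D)h\big|\,dx.
\]
Since $\|\widetilde\phi(2^{-j}D)g\|_{L^\infty}\lesssim\|\phi\|_{C^N}\|g\|_{L^\infty}$ uniformly in $j$, I pull $g$ outside, and Lemma \ref{flag-Lem2.7} applied to $\psi_1$ and $\widetilde\psi$ (with $\|f\|_{H^\infty}$ replaced by $\|f\|_{BMO}$) bounds the rest by $\|f\|_{BMO}\|h\|_{H^1}$. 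Duality then delivers the desired $BMO$ estimate on $T_\tau(f,g)$.

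There is no serious obstacle here: the Fourier-support observation and the two black-box lemmas together do all the work, and the only points needing care are confirming that the Littlewood--Paley step is uniform in the bounded sequence $(c_j)$ and that every implicit constant depends only on $n,p,q,\|\psi_1\|_{C^N},\|\phi\|_{C^N}$, both of which are routine.
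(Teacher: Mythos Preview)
Your proof is correct and follows essentially the same approach as the paper's: the Fourier-support observation giving lacunary annuli, the Littlewood--Paley reduction to a square function, Lemma~\ref{flag-Lem2.5} for the first three cases, and the $H^1$--$BMO$ duality argument with Lemma~\ref{flag-Lem2.7} for the last. Your introduction of $\widetilde\phi(\eta)=\phi(8\eta)$ is a harmless notational device to match the scaling in Lemma~\ref{flag-Lem2.5} exactly; the paper simply applies that lemma directly to $\phi(2^{-j+3}\,\cdot)$ without renaming.
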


\begin{proof} 
%
%
%
From the assumptions on the supports of $\psi_1$ and $\phi$, 
it follows that 
the support of the Fourier transform of 
$\psi_{1}(2^{-j}D)f \cdot 
\phi (2^{-j+3}D) g  $ is included in the annulus 
$\{2^{j-2}\le |\zeta|\le 2^{j+2}\}$. 
Hence, for $0<r<\infty$,  
the Littlewood-Paley theory implies 
\begin{align*}
&
\left\| 
\sum_{j=-\infty}^{\infty}
c_{j}
\psi_1 (2^{-j}D)f\cdot 
\phi (2^{-j+3}D)g
\right\|_{L^r}
\\
&
\lesssim 
\left\| 
\sum_{j=-\infty}^{\infty}
c_{j}
\psi_1 (2^{-j}D)f\cdot 
\phi (2^{-j+3}D)g
\right\|_{H^r}
\\
&
\lesssim 
\left\|
\bigg\| 
c_{j}
\psi_1 (2^{-j}D)f\cdot 
\phi (2^{-j+3}D)g
\bigg\|_{\ell^2_j}
\right\|_{L^r_x}
\\
&=: (\ast). 
\end{align*}
By Lemma \ref{flag-Lem2.5}, we have 
\begin{equation*}
(\ast)
\lesssim 
\|\psi_1\|_{C^N} 
\|\phi \|_{C^N} 
\begin{cases}
{\|f\|_{H^p}\|g\|_{H^q}, } 
& {\text{if}\;\;0<p,q<\infty \;\;\text{and}\;\; 1/p+1/q=1/r,} 
\\
{\|f\|_{H^p}\|g\|_{L^{\infty}}, } 
& {\text{if}\;\;0<p<\infty\;\;\text{and}\;\; p=r,}  
\\
{\|f\|_{BMO}\|g\|_{H^q}, } 
& {\text{if}\;\;0<q<\infty \;\; \text{and}\;\; q=r.}
\end{cases}
\end{equation*}
These prove the claims for the former three multiplier classes.

We shall prove 
$\tau \in \calM (BMO \times L^{\infty} \to BMO)$. 
By the same argument as given in 
$(5^{\circ})$ of the Proof of Lemma \ref{lem-paraproduct-A}, 
it is sufficient to show the inequality 
\begin{equation}\label{dualestimate}
\begin{split}
&
\left| 
\int 
\sum_{j=-\infty}^{\infty} 
c_{j} 
\psi_1 (2^{-j}D) f(x)\, 
\phi (2^{-j+3}D) g(x)\, 
\widetilde{\psi} (2^{-j}D)h(x)\, 
dx\, 
\right|
\\
&
\lesssim 
\| \phi \|_{C^N} \|g\|_{L^{\infty}}  
\| \psi_1 \|_{C^N} \|f\|_{BMO} 
\|h\|_{H^1}, 
\end{split}
\end{equation}
where $\widetilde{\psi}$ 
is the same function as given there. 
In the present case, using 
the assumption $|c_j|\le 1$ and 
the inequality 
$\| \phi (2^{-j+3}D) g\| _{L^{\infty}} \lesssim 
\|\phi\|_{C^N} \|g\|_{L^{\infty}}$, 
we see that 
\begin{align*}
&
(\text{the left hand side of \eqref{dualestimate}})
\\
&\lesssim 
\| \phi \|_{C^N} \|g\|_{L^{\infty}}  
\int \sum_{j=-\infty}^{\infty} 
\big| \psi_1 (2^{-j}D) f(x)\big| \, 
\big| \widetilde{\psi} (2^{-j}D)h(x)\big| \, 
dx. 
%
\end{align*}
Now 
\eqref{dualestimate} follows 
from 
Lemma \ref{flag-Lem2.7}. 
This completes the proof of Lemma \ref{lem-paraproduct-B}.  
\end{proof}

\begin{proof}[Proof of Proposition \ref{prop-a0a1a2}]
We use several well-known methods developed in the 
theory of bilinear Fourier multiplier operators. 
We first decompose $\sigma (\xi, \eta)$ by using 
the usual dyadic partition of unity. 
Let $\psi$, $\zeta$, and $\varphi$ be the functions 
as given in Notation \ref{notation}. 

We decompose $\sigma$ into three parts: 
\begin{align*}
\sigma (\xi, \eta) 
&=
\sum_{j\in \Z} 
\sum_{k\in \Z} 
\sigma (\xi, \eta) \psi (2^{-j}\xi) \psi (2^{-k}\eta)
\\
&
=\sum_{j-k\ge 3}
+ 
\sum_{|j-k|\le 2}
+
\sum_{j-k\le -3}
\\
&
=
\sigma_{\RomI} (\xi, \eta)
+
\sigma_{\II} (\xi, \eta)
+
\sigma_{\III} (\xi, \eta), 
\end{align*}
where 
$\sum_{j-k\ge 3}$, $\sum_{|j-k|\le 2}$, and 
$\sum_{j-k\le -3}$ denote the sums of 
$\sigma (\xi, \eta) \psi (2^{-j}\xi) \psi (2^{-k}\eta)$ 
over $j,k \in \Z$ 
that satisfy the designated restrictions. 
We shall consider each of $\sigma_{\RomI}$, 
$\sigma_{\II}$, and $\sigma_{\III}$.

$(1^{\circ})$ 
For the multiplier $\sigma_{\II}$, 
we shall prove the following:  
\begin{align*}
&
\sigma_{\II} \in 
\calM (H^p \times H^q \to L^r), 
\quad 
0<p,q<\infty, \;\; 1/p+1/q=1/r, 
\\
&
\sigma_{\II} \in 
\calM (H^p \times BMO \to L^p), 
\quad 
0<p<\infty, 
\\
&
\sigma_{\II} \in 
\calM (BMO \times H^q \to L^q), 
\quad 
0<q<\infty, 
\\
&
\sigma_{\II} \in 
\calM (BMO \times BMO \to BMO). 
\end{align*}

To prove this, observe that
$|\xi|\approx |\eta|\approx 2^j$ 
on the support of 
$\psi (2^{-j}\xi) \psi (2^{-k}\eta)$ with $|j-k|\le 2$. 
From this we see that 
$\sigma_{\II} \in \dotS ^{0}_{1,0}(\R^{2n})$. 
Hence Proposition \ref{prop-CM} implies that 
$\sigma_{\II}$ is a bilinear Fourier multiplier for the following spaces: 
\begin{align*}
&
H^p \times H^q \to L^r, 
\quad 0<p,q<\infty, 
\; \; 
1/p+1/q=1/r, 
\\
&
H^p \times L^\infty \to L^p, 
\quad 0<p<\infty, 
\\
&
L^\infty \times H^q \to L^q, 
\quad 0<q<\infty, 
\\
&
L^{\infty} \times L^{\infty} \to BMO. 
\end{align*}
We shall prove that 
the space $L^\infty$ in the above 
can be replaced by $BMO$.

We use the Fefferman--Stein decomposition of $BMO$, 
which asserts that every $g\in BMO \cap L^2$ can be written as 
\[
g= g_0 + \sum_{\ell =1}^{n} R_{\ell} g_{\ell}, 
\quad 
\sum_{\ell=0}^{n} \|g_{\ell}\|_{L^{\infty}} \approx \|g\|_{BMO}, 
\] 
where $R_{\ell}h = \left( -i |\xi|^{-1}\xi_{\ell} \widehat{h}(\xi)\right)^{\vee}$ 
is the Riesz transform. 
(If $g\in BMO \cap L^2$, then we can take 
$g_{\ell}\in L^{\infty} \cap L^2$ and 
the equality 
$g= g_0 + \sum_{\ell =1}^{n} R_{\ell} g_{\ell}$ holds 
without modulo constants; see \cite{M-BMO}.) 
Thus 
\[
T_{\sigma_{\II}}(f,g)
=
T_{\sigma_{\II}}(f,g_0)
+\sum_{\ell=1}^{n} 
T_{\sigma_{\II}}(f,R_{\ell} g_\ell)
=
T_{\sigma_{\II}}(f,g_0)
+\sum_{\ell=1}^{n} 
T_{\sigma_{\ell, \II}}(f,g_\ell),
\]
where 
\[
\sigma_{\ell}(\xi, \eta) 
=
\sigma (\xi, \eta) ( -i |\eta|^{-1}\eta_{\ell} )
=
a_0 (\xi, \eta) 
a_{1}(\xi)  
a_2 (\eta) ( -i |\eta|^{-1}\eta_{\ell} )
\]
and 
$\sigma_{\ell, \II}$ is defined in the same way as 
$\sigma \mapsto \sigma_{\II}$. 
Since 
the multiplier 
$a_2 (\eta) ( -i |\eta|^{-1}\eta_{\ell} )$ 
belongs to $\dotS ^{-m_2}_{1,0}(\R^n)$, 
we can apply the result 
$\sigma_{\II} \in \calM ( H^p \times L^{\infty} \to L^p)$ 
to $\sigma_{\ell, \II}$ to see that 
\[
\left\| 
T_{\sigma_{\II}}(f,g)
\right\|_{L^p}
\lesssim 
\|f\|_{H^p}
\sum_{\ell=0}^{n}
\|g_{\ell}\|_{L^{\infty}}
\approx 
\|f\|_{H^p}
\|g\|_{BMO}. 
\] 
Thus 
$\sigma_{\II} \in \calM ( H^p \times BMO \to L^p)$. 
The claims  
$\sigma_{\II}\in \calM (BMO \times H^q \to L^q)$ and 
$\sigma_{\II}\in \calM (BMO \times BMO \to BMO)$ are 
proved in the same way. 


$(2^{\circ})$  
For the multiplier $\sigma_{\RomI}$, 
we shall prove the following: 
\begin{align}
&
\sigma_{\RomI} \in 
\calM (H^p \times H^q \to L^r) 
\quad 
\text{if}\;\; 0<p,q<\infty, \;\; 1/p+1/q=1/r, 
\label{001}
\\
&
\sigma_{\RomI} \in 
\calM (H^p \times BMO \to L^p) 
\quad 
\text{if}\;\; m_2<0 \;\;\text{and}\;\;0<p<\infty, 
\label{002}
\\
&
\sigma_{\RomI} \in 
\calM (BMO \times H^q \to L^q) 
\quad 
\text{if}\;\; 0<q<\infty, 
\label{003}
\\
&
\sigma_{\RomI} \in 
\calM (BMO \times BMO \to BMO) 
\quad 
\text{if}\;\; m_2<0. 
\label{004}
\end{align}

{\it Proof of \eqref{001} in the case $m_2=0$.}\/  
We write  
$\sigma_{\RomI}(\xi, \eta) =
b(\xi, \eta) a_2 (\eta)$ 
with 
\begin{equation}\label{def-b}
\begin{split}
b(\xi, \eta) 
&= \sum_{j-k \ge 3}
a_0 (\xi, \eta) a_1 (\xi) \psi (2^{-j}\xi) \psi (2^{-k}\eta)
\\
&
=\sum_{j=-\infty}^{\infty}
a_0 (\xi, \eta) a_1 (\xi) \psi (2^{-j}\xi) \varphi (2^{-j+3}\eta). 
\end{split}
\end{equation}
Since $m_2=0$ and $m=m_1$ in the present case, 
we see that $b\in \dotS ^{0}_{1,0}(\R^{2n})$. 
Thus, 
Proposition \ref{prop-CM} implies that 
$b\in \calM( H^p \times H^q \to L^r)$. 
Also since $a_2 \in \dotS ^{0}_{1,0}(\R^n)$ in the present case, 
the classical multiplier theorem for linear operators 
implies $a_2 \in \calM (H^q \to H^q)$. 
Hence $\sigma_{\RomI}\in \calM( H^p \times H^q \to L^r)$. 

{\it Proof of \eqref{001} in the case $m_2< 0$.}\/  
Notice that 
$\sigma_{\RomI}$ is supported in $|\xi|\ge 2 |\eta|$ and 
satisfies 
\[
\left| 
\partial_{\xi}^{\alpha} \partial_{\eta}^{\beta}
\sigma_{\RomI}(\xi, \eta)
\right| 
\le 
C_{\alpha, \beta} 
\bigg( 
\frac{|\xi|}{|\eta|}
\bigg)^{m_2}
|\xi|^{-|\alpha|}
|\eta|^{-|\beta|}. 
\]
Since $m_2 < 0$, 
the theorem of Grafakos and Kalton 
\cite[Theorem 7.4]{GK2} implies 
$\sigma_{\RomI}\in \calM (H^p \times H^q \to L^r)$.

{\it Another proof of \eqref{001} in the case $m_2<0$.}\/  
Here we shall give a direct proof of \eqref{001} for the 
case 
$m_2 <0$, which uses only a classical method. 

Take a function $\widetilde{\psi}\in C_{0}^{\infty}(\R^n)$ such 
that 
$\supp \widetilde{\psi} \subset \{3^{-1}\le |\xi| \le 3\}$ 
and $\widetilde{\psi} (\xi)=1$  
for $2^{-1}\le |\xi| \le 2$. 
Then 
\[
\sigma_{\RomI}(\xi, \eta)
=
\sum_{j-k \ge 3} 
\sigma (\xi, \eta) 
\widetilde{\psi}(2^{-j}\xi) 
\widetilde{\psi}(2^{-k}\eta) 
\psi (2^{-j}\xi) \psi (2^{-k}\eta). 
\]
Consider the function 
\[
\sigma (2^j \xi, 2^{k}\eta)
\widetilde{\psi}(\xi) 
\widetilde{\psi}(\eta)
=
a_0 (2^j \xi, 2^k \eta)
a_1 (2^j \xi) 
a_2 (2^k \eta) 
\widetilde{\psi}(\xi) 
\widetilde{\psi}(\eta)
\]
with 
$j-k\ge 3$. 
This function is supported in 
$\{3^{-1}\le |\xi|\le 3\}\times 
\{3^{-1}\le |\eta|\le 3\}$ and satisfies the estimate 
\begin{equation*}
\left|
\partial_{\xi}^{\alpha}
\partial_{\eta}^{\beta}
\big\{ 
\sigma (2^j \xi, 2^{k}\eta)\widetilde{\psi}(\xi) 
\widetilde{\psi}(\eta)
\big\} 
\right| 
\le 
C_{\alpha, \beta}\,  2^{(j-k)m_2}
\end{equation*}
with $C_{\alpha, \beta}$ independent of $j, k\in \Z$. 
Hence using the Fourier series expansion we can write 
\[
\sigma (2^j \xi, 2^{k}\eta)\widetilde{\psi}(\xi) 
\widetilde{\psi}(\eta)
=
\sum_{a,b \in \Z^n} 
c_{j,k}^{(a,b)} 
e^{i a \cdot \xi} e^{i b \cdot \eta}, 
\quad 
|\xi|<\pi, \;\; |\eta|<\pi,  
\]
with the coefficient satisfying 
\begin{equation}\label{F-cjab-decay}
\left| c_{j,k}^{(a,b)} \right| 
\lesssim 
2^{(j-k)m_2} 
(1+|a|)^{-L} (1+|b|)^{-L}
\end{equation}
for any $L>0$. 
Changing variables 
$\xi \to 2^{-j}\xi$ and $\eta \to 2^{-k}\eta$
and 
multiplying $\psi (2^{-j}\xi) \psi (2^{-k}\eta)$, 
we obtain 
\[
\sigma (\xi, \eta)
\psi (2^{-j}\xi) 
\psi (2^{-k}\eta)
=
\sum_{a,b \in \Z^n} 
c_{j,k}^{(a,b)} 
e^{i a \cdot 2^{-j} \xi} e^{i b \cdot 2^{-k}\eta}
\psi (2^{-j}\xi) 
\psi (2^{-k}\eta).  
\]
Thus $\sigma_{\RomI}$ is written as 
\begin{equation}\label{F-sigma-I}
\sigma_{\RomI} (\xi, \eta)
=
\sum_{a,b \in \Z^n}
\sum_{j-k \ge 3}
c_{j,k}^{(a,b)} 
\psi^{(a)}(2^{-j}\xi)
\psi^{(b)}(2^{-k}\eta)  
\end{equation}
with
\begin{equation}\label{F-psia-psib}
\psi^{(a)}(\xi)
=e^{i a \cdot \xi} 
\psi (\xi), 
\quad 
\psi^{(b)}(\eta)
=e^{i b \cdot \eta} 
\psi (\eta).  
\end{equation}

Now applying Lemma \ref{lem-paraproduct-A} 
to $\psi_1 = \psi^{(a)}$ and 
$\psi_2 = \psi^{(b)}$, we obtain 
\begin{align*}
&
\left\| 
\sum_{j-k \ge 3}
c_{j,k}^{(a,b)} 
\psi^{(a)}(2^{-j}\xi)
\psi^{(b)}(2^{-j}\eta)
\right\|_{\calM (H^p \times H^q \to L^r)}
\\
&
\lesssim 
(1+|a|)^{-L} (1+|b|)^{-L} 
\|\psi^{(a)}\|_{C^N} 
\|\psi^{(b)}\|_{C^N} 
\\
&
\lesssim 
(1+|a|)^{-L+N} (1+|b|)^{-L+N}.  
\end{align*}
Taking $L$ sufficiently large and taking sum over 
$a,b \in \Z^n$, we obtain \eqref{001}. 

{\it Proof of \eqref{002}.}\/
Using 
\eqref{F-sigma-I}, 
\eqref{F-psia-psib}, and \eqref{F-cjab-decay}, 
we can derive \eqref{002} 
from Lemma \ref{lem-paraproduct-A}.

{\it Proof of \eqref{003}.}\/ 
If $m_2<0$, then by using 
\eqref{F-sigma-I}, 
\eqref{F-psia-psib}, and \eqref{F-cjab-decay}, 
we can derive 
\eqref{003} from 
Lemma \ref{lem-paraproduct-A}.

Assume $m_2=0$. 
Then we write $\sigma_{\RomI}$ as 
$\sigma_{\RomI}(\xi, \eta) =b(\xi, \eta) a_2 (\eta)$ with 
$b$ given by \eqref{def-b}. 
%
Since 
$a_2 \in \dotS ^{0}_{1,0} (\R^n)$ 
in the present case ($m_2=0$),  
the linear multiplier theorem implies 
$a_2 \in \calM (H^q \to H^q)$. 
Hence \eqref{003} will follow if 
we prove 
$b\in \calM (BMO \times H^q \to L^q)$. 
By the same argument given in the proof of \eqref{001}, 
we can write $b$ as 
\begin{align}
&
b (\xi, \eta)
=
\sum_{a,b \in \Z^n}
\sum_{j=-\infty}^{\infty}
c_{j}^{(a,b)} 
\psi^{(a)}(2^{-j}\xi)
\varphi^{(b)}(2^{-j+3}\eta), 
\label{b-sumab-sumj}
\\
&
\big| c_{j}^{(a,b)} \big| 
\lesssim 
(1+|a|)^{-L} 
(1+|b|)^{-L}, 
\label{b-cj}
\\
&
\psi^{(a)}(\xi)
=e^{i a \cdot \xi} 
\psi (\xi), 
\quad 
\varphi^{(b)}(\eta)
=e^{i b \cdot \eta} 
\varphi (\eta).
\label{psia-varphib}
\end{align}
Now we apply Lemma \ref{lem-paraproduct-B} 
to $\psi_1 = \psi^{(a)}$ and 
$\phi = \varphi^{(b)}$ to obtain 
\begin{align*}
&
\left\| 
\sum_{j-k \ge 3}
c_{j,k}^{(a,b)} 
\psi^{(a)}(2^{-j}\xi)
\varphi^{(b)}(2^{-j+3}\eta)
\right\|_{\calM (BMO \times H^q \to L^q)}
\\
&
\lesssim 
(1+|a|)^{-L} (1+|b|)^{-L} 
\|\psi^{(a)}\|_{C^N} 
\|\psi^{(b)}\|_{C^N} 
\lesssim 
(1+|a|)^{-L+N} (1+|b|)^{-L+N}.   
\end{align*}
Taking $L$ sufficiently large and taking sum over 
$a,b \in \Z^n$, we 
obtain 
$b \in \calM (BMO \times H^q \to L^q)$.

{\it Proof of \eqref{004}.}\/ 
This is also derived from 
Lemma \ref{lem-paraproduct-A} 
by the use of 
\eqref{F-sigma-I}, 
\eqref{F-psia-psib}, 
and 
\eqref{F-cjab-decay}.  

$(3^{\circ})$ For the multiplier $\sigma_{\III}$, 
the following hold: 
\begin{align*}
&
\sigma_{\III} \in 
\calM (H^p \times H^q \to L^r)
\quad 
\text{if}\;\; 0<p,q<\infty, \;\; 1/p+1/q=1/r, 
\\
&
\sigma_{\III} \in 
\calM (H^p \times BMO \to L^p)  
\quad 
\text{if}\;\; 0<p<\infty, 
\\
&
\sigma_{\III} \in 
\calM (BMO \times H^q \to L^q)  
\quad 
\text{if}\;\; m_1 < 0\;\;\text{and}\;\;0<q<\infty, 
\\
&
\sigma_{\III} \in 
\calM (BMO \times BMO \to BMO) 
\quad 
\text{if}\;\; m_1 < 0.  
\end{align*}
In fact, these follow from 
the results for $\sigma_{\RomI}$ by the obvious symmetry.

Now we obtain the conclusion of 
Proposition \ref{prop-a0a1a2} by combining  
the results of 
$(1^{\circ})$, $(2^{\circ})$, and $(3^{\circ})$. 
This completes the proof of Proposition \ref{prop-a0a1a2}. 
\end{proof}

\subsection{Proof of Proposition \ref{prop-a0a1} } 
\label{G2}

Let $\psi$ and $\varphi$ be the functions as given in 
Notation \ref{notation}. 
In the same way as in Proof of Proposition \ref{prop-a0a1a2}, 
we decompose $\tau$ into three parts: 
\begin{align*}
&
\tau (\xi, \eta) =
\tau_{\RomI} (\xi, \eta)
+
\tau_{\II} (\xi, \eta)
+
\tau_{\III} (\xi, \eta), 
\\
&
\tau_{\RomI} (\xi, \eta)
=
\sum_{j-k\ge 3} a_0 (\xi, \eta) a_1 (\xi) 
\psi (2^{-j}\xi) \psi (2^{-k}\eta), 
\\
&
\tau_{\II} (\xi, \eta)
=
\sum_{|j-k|\le 2} 
a_0 (\xi, \eta) a_1 (\xi) 
\psi (2^{-j}\xi) \psi (2^{-k}\eta), 
\\
&
\tau_{\III} (\xi, \eta)
=
\sum_{j-k\le -3} 
a_0 (\xi, \eta) a_1 (\xi) 
\psi (2^{-j}\xi) \psi (2^{-k}\eta). 
\end{align*}
We shall prove each of $ \tau_{\RomI}$, 
$\tau_{\II}$, and $\tau_{\III}$ belongs to 
the multiplier class as mentioned in the proposition.

{\it Proof of (1).}\/  
Let $0<p<\infty$. 
The multipliers $\tau_{\II}$ and $\tau_{\III}$ belong to 
$\calM (H^p \times BMO \to L^p)$. 
In fact, these are proved in 
$(1^{\circ})$ and $(3^{\circ})$ in Proof of Proposition \ref{prop-a0a1a2}.

We shall prove 
$\tau_{\RomI} 
\in \calM (H^p \times L^{\infty} \to L^p)$. 
By the same argument as in 
Proof of Proposition \ref{prop-a0a1a2} 
(see Proof of \eqref{003}), 
we can write $\tau_{\RomI}$ as 
\begin{equation}\label{tauI-paraproduct}
\tau_{\RomI} (\xi, \eta)
=
\sum_{a, b \in \Z^n} 
\sum_{j=-\infty}^\infty{}
c_{j}^{(a,b)}
\psi^{(a)}(2^{-j}\xi)
\varphi^{(b)} (2^{-j+3}\eta),  
\end{equation}
with $c_{j}^{(a,b)} $ satisfying \eqref{b-cj} and 
$\psi^{(a)}$ and $\varphi^{(b)}$ defined by 
\eqref{psia-varphib}. 
Then Lemma \ref{lem-paraproduct-B} gives 
\begin{align*}
&
\left\| \sum_{j=-\infty}^\infty{}
c_{j}^{(a,b)}
\psi^{(a)}(2^{-j}\xi)
\varphi^{(b)} (2^{-j+3}\eta)
\right\|_{\calM (H^p \times L^{\infty} \to L^p)}
\\
&
\lesssim 
(1+|a|)^{-L} (1+|b|)^{-L} 
\|\psi^{(a)}\|_{C^N} 
\|\varphi^{(b)}\|_{C^N} 
\lesssim 
(1+|a|)^{-L+N} (1+|b|)^{-L+N}. 
\end{align*}
Taking $L$ sufficiently large and taking sum over $a,b \in \Z^n$, 
we obtain $\tau_{\RomI} 
\in \calM (H^p \times L^{\infty} \to L^p)$. 
Thus the part (1) is proved.

{\it Proof of (2).}\/  
Here we assume $m_1 <0$. 
By the results proved in 
$(1^{\circ})$ and $(3^{\circ})$ in Proof of Proposition \ref{prop-a0a1a2},  
the multipliers 
$\tau_{\II}$ and $\tau_{\III}$ belong to 
$\calM (BMO \times BMO \to BMO)$. 
Recall that the multiplier $\tau_{\RomI}$ is written as 
\eqref{tauI-paraproduct} 
with $c_{j}^{(a,b)} $ satisfying \eqref{b-cj} and 
$\psi^{(a)}$ and $\varphi^{(b)}$ defined by 
\eqref{psia-varphib}. 
Hence we can prove 
$\tau_{\RomI} \in \calM (BMO \times L^{\infty} \to BMO)$ 
by using Lemma \ref{lem-paraproduct-B}. 
Thus the part (2) of Proposition \ref{prop-a0a1} 
is proved. 
This completes the proof of Proposition \ref{prop-a0a1}.


\end{document}